\def\ps@myheadings{\let\@mkboth\@gobbletwo
  \def\@oddhead{{\slshape\rightmark}\hfil{\footnotesize\thepage}}
  \def\@oddfoot{}
  \def\@evenhead{{\footnotesize\thepage}\hfil\slshape\leftmark}
  \def\@evenfoot{}
  \def\sectionmark##1{}\def\subsectionmark##1{}
}
\def\runninghead#1#2{\pagestyle{myheadings}
\markboth{{\protect\footnotesize\it{\quad #1}}}
{{\protect\footnotesize\it{#2\quad}}}}
\newtheorem{Theorem}{Theorem}[section]
\newtheorem{Proposition}[Theorem]{Proposition}
\newtheorem{Lemma}[Theorem]{Lemma}
\newtheorem{Corollary}[Theorem]{Corollary}
\newtheorem{Remark}[Theorem]{Remark}
\newtheorem{Hypothesis}{Hypothesis}
\date{August 12, 2016}
\newcommand{\eps}{\epsilon}
\newcommand{\R}{\mathbb R}
\newcommand{\bR}{\mathbb R}
\newcommand{\N}{\mathbb N}
\newcommand{\E}{\mathbb E}
\renewcommand{\P}{\mathbb P}
\newcommand{\bD}{\mathbb D}
\def\ds{\displaystyle}
\begin{document}

\setlength{\textheight}{7.7truein}    

\runninghead{S. Bonaccorsi, G. Da Prato and L. Tubaro}{Surface integrals and integration by parts formulae}

\thispagestyle{empty}
\setcounter{page}{1}

\centerline{\bf\large Construction of a surface integral under local Malliavin assumption}
\baselineskip=18pt
\par
\centerline{\bf \large and integration by parts formulae}

\baselineskip=13pt
\vspace*{0.37truein}

\centerline{Stefano BONACCORSI}
\baselineskip=12pt
\centerline{\footnotesize Dipartimento di Matematica, Universit\`a di
    Trento,}
\par
\centerline{\footnotesize via Sommarive 14, 38123 Povo (Trento), Italia}
\par
\centerline{\footnotesize \it stefano.bonaccorsi@unitn.it}

\baselineskip=18pt
\par

\centerline{Giuseppe DA PRATO}
\baselineskip=12pt
\centerline{\footnotesize Scuola Normale Superiore di Pisa,}
\par
\centerline{\footnotesize Piazza dei Cavalieri 7, 56126 Pisa, Italia}
\par
\centerline{\footnotesize \it daprato@sns.it}

\baselineskip=18pt
\par

\centerline{Luciano TUBARO}
\baselineskip=12pt
\centerline{\footnotesize Dipartimento di Matematica, Universit\`a di
    Trento,}
\par
\centerline{\footnotesize via Sommarive 14, 38123 Povo (Trento), Italia}
\par
\centerline{\footnotesize \it luciano.tubaro@unitn.it}

\vspace*{0.225truein}

\vspace*{0.21truein}

\centering{\begin{minipage}{4.5in}\footnotesize\baselineskip=10pt
    \parindent=0pt 
    {In this paper, we consider convex sets $K_r = \{g \ge r\}$ in an infinite dimensional Hilbert space, where $g$ is suitably
related to a reference Gaussian measure $\mu$ in $H$. We first show how
to define a surface measure on the level sets $\{g = r\}$ that is related to $\mu$. This allows to introduce an
integration-by-parts formula in $H$. This formula can be applied in several important constructions, as for instance
the case where $\mu$ is the law of a (Gaussian) stochastic process and $H$ is the space of its trajectories.}
    \par
    \parindent=15pt \footnotesize\baselineskip=10pt
    {\footnotesize\it Keywords}\/: 
    {Gaussian measures, surface measures in infinite dimensional spaces, integration-by-parts formulae, Neumann problem}
    \par
    \parindent=15pt \footnotesize\baselineskip=10pt
    {\footnotesize\it MSC 2010}\/: {60G15; 28C20}
    \end{minipage}}\par

{\tableofcontents }

\section{Introduction}

Let $H$ be a separable Hilbert space (with inner product denoted by $\langle \cdot,\cdot\rangle$ and norm $|\cdot|$) endowed with a 
non degenerate centered Gaussian measure $\mu=N_Q$;
we are given  
 a  continuous mapping $g:H\to \R$   and an open subset $I$ of $\R$ such that $I \subset g(H)$.

Our aim is to construct surface measures defined on level sets $\{g = r\}$ of the mapping $g$ under
local Malliavin conditions on $I$, see Hypothesis \ref{h1} below, and provide several integration-by-parts formulae.
Our construction extends previous results in the literature, and  in particular we shall rely on the procedure recently introduced
in \cite{DaLuTu14}:
under Hypothesis \ref{h1} below,  for each $r\in I$ there exists a  Borel  measure $\sigma_r$ in $H$, concentrated on   {the level surface} $\{g=r\}$  of $g$ such that for any $\varphi\in UC_b(H)$ we have
\begin{equation}
\label{e1.0}
F_\varphi(r):=\int_{\{g=r\}}\varphi\,{\rm d} \sigma_r=\lim_{\epsilon\to 0}\frac1{2\epsilon}\int_{\{r-\epsilon\le g\le r+\epsilon\}}\varphi\,{\rm d} \mu,\quad r\in I.
\end{equation}
We shall call $\sigma_r$ the {\em surface measure} related to $\mu$ on $\{g=r\}$.
\\
When $\sigma_r$ exists,  the measure  $(\varphi \mu)\circ g^{-1}$ is absolutely continuous with respect to  the  Lebesgue measure $\lambda$  restricted on $I$  and possesses a  continuous density which we denote by $\rho_\varphi(r),\,r\in I$. Moreover,   for any $r\in I$ there exists a   version of $\E^\mu[\varphi|g=r]$ such that
\begin{equation}
\label{e1.00}
\int_{\{g=r\}}\varphi\, {\rm d} \sigma_r=\E^\mu[\varphi|g=r]\;\rho_1(r), \quad \forall\;r\in I.
\end{equation}
\\
As by-product, we prove the following integration by parts formula
\begin{equation}
\label{IBP}
\int_{\{g=r\}}\langle Mg,z\rangle\, {\rm d} \sigma_r=-\int_{\{g\le r\}}( \langle M\varphi,z\rangle-\varphi\,\langle Q^{-1/2}x,z\rangle)\, {\rm d} \mu,
\end{equation}
under the assumptions $\varphi\in C^1_b(H)$,  $g\in \mathbb D^{1,2}(H,\mu)$, $z\in H,$ $\langle Mg,z\rangle\in UC_b(H)\cup\mathbb D^{1,2}(H,\mu)$  and $r\in I$, where $M$  is the Malliavin derivative and $\mathbb D^{1,2}(H,\mu)$ its domain (see below for further details).

Our basic assumption is the following
 
\begin{Hypothesis}
\label{h1}
There exist two random variables $u:H\to H$ and  $\gamma:H\to\R$ (both depending on $I$) such that
\begin{equation}
\label{e1.6}
 \langle Mg(x),u(x)  \rangle =\gamma(x),\quad \forall \,x\in g^{-1}(I)
\end{equation}
and
\begin{equation}
\label{e1.7}
\frac{u}{\gamma}\in D(M_p^*)\quad\forall\;p\ge 1.
\end{equation}
\end{Hypothesis}
Notice that if  $I=\R$, $u=Mg$ and  $\gamma=|Mg|^2$, Hypothesis \ref{h1} reduces to  the classical assumption
 from Airault--Malliavin \cite{AiMa88}, namely,
\begin{equation}
\label{e1.6a}
\frac{Mg}{|Mg|^2}\in D(M^*).
\end{equation}
Several papers have been devoted to the construction of  surface integrals under  assumption
\eqref{e1.6a} (requiring possibly some additional regularity on $g$), see for instance  \cite{AiMa88}, \cite{Bo98}, \cite{DaLuTu14} and the references therein. 

 It is well known, 
however, that \eqref{e1.6a} requires strong regularity for  the level surfaces $\{g=r\}$ and it is not fulfilled  for the function
\begin{equation*}
g(x)=\inf_{t\in[0,1]}x(t),\quad x\in L^2(0,1),
\end{equation*}
which arises in studying reflection problems on the set of positive functions, 
see \cite{NuPa92}, \cite{Za01}.

The {\em local Malliavin condition} provided by Hypothesis \ref{h1} is not new, 
since it has already  been introduced by D. Nualart, see \cite{Nu06} (with other   additional   regularity  assumptions both on $u$ and on $\gamma$)  but with a different purpose, namely for proving the existence of a density of  some Gaussian random variables with respect to the Lebesgue measure. 
As we shall see, this is   only a first step in  constructing  a surface measure.

\smallskip

In Section 3 we provide sufficient conditions on $g$ ensuring   Hypothesis \ref{h1}.
They require that $g$ has the special form
\begin{equation}
g(x)=\inf_{t\in[0,1]}X(t)(x),\quad x\in H,
\end{equation}
where  $X(t),\,t\in[0,1],$ is the solution of a stochastic differential equation on $\R$ with smooth coefficients. 
We shall need some results about  the unique  existence of a   minimum of $g(x)$
$\mu$--a.s. (Proposition \ref{p3.1}) and  a formula for the Malliavin derivative of $g$ (Proposition
\ref{p.2901}) which are probably known. We presents the  proofs, however, for the reader's convenience. 

We are able  to check  Hypothesis \ref{h1}   only  in few particular  situations:  more precisely, when:    i)  $X$ a real  Brownian motion, ii)  $X$ is a distorted   Brownian motion, iii)  $X$ is a geometric Brownian motion. 
We shall explain why we are not able so far to  handle more general cases, see Remark \ref{r3.7f}.

\smallskip

In  Section 4  we concentrate 
  on  integration by parts formulae 
 in the set $L^2_+(0,1)$ of all nonnegative functions. In this case  we provide more precise results than \eqref{IBP}.  The first results in this case where obtained by L. Zambotti, see \cite{Za01}.
 See also \cite{Ot09}.

    More precisely we   consider a Gaussian process $X$ in $[0,1]$ and its law $\mu=N_Q$ concentrated on $E=C([0,1])$ and set
\begin{align*}
g(x):=\min_{t\in [0,1]} X(t)(x),\quad x\in E
\end{align*}
assuming that the  law of $g$  is absolutely continuous with respect to  the Lebesgue measure  on $(-\infty,0]$ with a density   $\rho$ and that for  $\mu$--almost all  $x\in E$, $g(x)$  attains the minimum at  a unique   point   $\tau_x$, so that  for each  $z\in E$  we can write
  $Dg(x)\cdot z:=z(\tau_x)$, see Hypothesis \ref{h2} below. Under these conditions  we prove the integration by parts formula, equation \eqref{e6.2} below
\begin{multline*}
\E\big[z(\tau_x)\varphi\big | g\!=\!r\big]\rho(r)
\\
=-\int_{\{g\ge r\}}[D\varphi\cdot z- \langle Q^{-1/2}x, Q^{-1/2}z  \rangle\, \varphi]\, {\rm d} \mu,\;\forall\;r<0.
\end{multline*}
We  also study the limit case $r=0$  and apply the obtained results when   $X$ is respectively: (i)   a Brownian motion, (ii)  a distorted Brownian motion,  (iii) a Brownian Bridge. 
In case (i) and (iii) we  recover  by a different method the celebrate integration by parts formulae from   \cite{Za01} and   \cite{BoZa04}.

Finally, we shall give some concluding remarks   to some open problems, in particular   to the   Neumann problem on $\{g\ge r\}$ for the Kolmogorov operator
$$
\mathcal L\varphi=\frac12\,\mbox{\rm Tr}\,[D^2\varphi]-\frac12\,\langle    Q^{-1}x,D\varphi\rangle.
$$ \bigskip

We end this section with some notations. For a differentiable function $\varphi\colon H\mapsto \R$ we denote by $D$ the gradient operator in $H$. 
By $C_b(H)$ (resp. $UC_b(H)$) we mean the space of all real continuous (resp. uniformly continuous) and bounded mappings   $\varphi\colon H\to \R$ endowed with the sup norm $\|\cdot \|_{\infty}$. Moreover, $C^1_b(H)$   is  the subspace of $C_b(H)$  of all continuously differentiable functions, with bounded derivative.
\\
 It is well known  that the operator $M:=Q^{1/2}D$, defined in $C^1_b(H)$, is closable in $L^p(H,\mu)$ for all $p\in[1,+\infty)$; 
we denote by $M_p$ its closure,  by $\mathbb D^{1,p}(H,\mu)$ the domain of $M_p$ and by $M_p^*$  the adjoint  of $M_p$. We shall call $M_p$   the {\em Malliavin derivative} and $M^*_p$ the {\em Skorokhod integral}.  When no confusion may arise we shall omit the sub-index $p$.

\section{Constructing a surface measure}

\subsection{Differentiability of $F_\varphi$}
We assume here Hypothesis \ref{h1} and set
 \begin{equation}
\label{e1.1}
F_\varphi(r):=\int_{\{g\le r\}}\varphi\,{\rm d} \mu,\quad \forall\,r\in I,\;\forall\;\varphi\in L^1(H,\mu).
\end{equation}

We start by proving that  when  $\varphi\in \mathbb D^{1,p}(H,\mu)$ then $F_\varphi(\cdot)$ is continuously differentiable. This result  is   slightly different than a similar one from  \cite{Nu06}; we present a proof, however, both for   reader's convenience and   because the assumptions from \cite{Nu06} are somewhat stronger.
\begin{Lemma}
\label{l2.1}
Assume Hypotheses \ref{h1} and  take $\varphi\in \mathbb D^{1,p}(H,\mu)$ for some $p\ge 1$.  Then $F_\varphi$ is continuously differentiable at any    $r\in I$ and we have
 \begin{equation}
\label{e1.13}
F'_\varphi(r)=  \int_{\{g\ge r\}} M^*\left(\tfrac{u}{\gamma} \, \varphi \right)    \,{\rm d} \mu,
\end{equation}
\end{Lemma}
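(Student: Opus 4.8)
The plan is to identify the distributional derivative of $F_\varphi$ on $I$ and then upgrade to classical $C^1$ regularity by checking that the candidate derivative is continuous. Write $G(r):=\int_{\{g\ge r\}}M^*\!\left(\tfrac{u}{\gamma}\,\varphi\right){\rm d}\mu$ for the claimed derivative. Since $\tfrac{u}{\gamma}\in D(M_p^*)$ for every $p$, the product rule $M^*(\varphi V)=\varphi\,M^*V-\langle M\varphi,V\rangle$ (valid because $\varphi\in\mathbb D^{1,p}$) shows $\tfrac{u}{\gamma}\,\varphi\in D(M^*)$ with $M^*(\tfrac{u}{\gamma}\varphi)\in L^1(H,\mu)$, so $G$ is well defined. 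I would fix a test function $\beta\in C^\infty_c(I)$, write $I=(a,b)$ and $\Psi(t):=\int_a^t\beta(s)\,{\rm d}s$ (smooth, bounded, constant outside $\mathrm{supp}\,\beta$), and compare $\int_I\beta\,F'_\varphi\,{\rm d}r$ with $\int_I\beta\,G\,{\rm d}r$.

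First I would evaluate both pairings by integrating by parts in $r$ and using Fubini. On the left,
\[
\int_I\beta(r)F'_\varphi(r)\,{\rm d}r=-\int_I\beta'(r)F_\varphi(r)\,{\rm d}r=\int_H\varphi\,\beta(g)\,{\rm d}\mu,
\]
because the inner integral $\int_I\beta'(r)\mathbf 1_{\{g(x)\le r\}}\,{\rm d}r$ collapses to $-\beta(g(x))$ when $g(x)\in\mathrm{supp}\,\beta$ and vanishes otherwise, $\beta$ being compactly supported in $I$. Symmetrically, Fubini applied to $G$ gives $\int_I\beta(r)G(r)\,{\rm d}r=\int_H\Psi(g)\,M^*(\tfrac{u}{\gamma}\varphi)\,{\rm d}\mu$, since $\int_I\beta(r)\mathbf 1_{\{g(x)\ge r\}}\,{\rm d}r=\Psi(g(x))$.

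The heart of the matter is then the identity $\int_H\varphi\,\beta(g)\,{\rm d}\mu=\int_H\Psi(g)\,M^*(\tfrac{u}{\gamma}\varphi)\,{\rm d}\mu$. Here I would use the chain rule $M[\Psi(g)]=\Psi'(g)\,Mg=\beta(g)\,Mg$ (note $Mg$ is available under Hypothesis \ref{h1}, so $\Psi(g)\in\mathbb D^{1,\cdot}$) together with the local relation $\langle Mg,u\rangle=\gamma$ on $g^{-1}(I)$: pairing with $\tfrac{u}{\gamma}$ and using that $\beta(g)$ vanishes off $g^{-1}(\mathrm{supp}\,\beta)\subset g^{-1}(I)$ yields $\beta(g)=\langle M[\Psi(g)],\tfrac{u}{\gamma}\rangle$ $\mu$-a.e. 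Multiplying by $\varphi$ and invoking the duality $\int_H\langle MW,V\rangle\,{\rm d}\mu=\int_H W\,M^*V\,{\rm d}\mu$ with $W=\Psi(g)$ and $V=\tfrac{u}{\gamma}\varphi\in D(M^*)$ closes the identity. Combining the three displays gives $\int_I\beta\,F'_\varphi\,{\rm d}r=\int_I\beta\,G\,{\rm d}r$ for every $\beta$, i.e. $F'_\varphi=G$ in $\mathcal D'(I)$. Finally, since $M^*(\tfrac{u}{\gamma}\varphi)\in L^1(H,\mu)$, dominated convergence shows $r\mapsto\int_{\{g\ge r\}}M^*(\tfrac{u}{\gamma}\varphi)\,{\rm d}\mu$ is continuous as soon as the law of $g$ has no atom in $I$ — which holds because Hypothesis \ref{h1} already forces absolute continuity of the law of $g$ on $I$. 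A distribution on $I$ whose derivative is continuous is $C^1$, hence $F_\varphi\in C^1(I)$ with $F'_\varphi=G$.

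The main obstacle is the functional-analytic bookkeeping in the central identity: verifying $\tfrac{u}{\gamma}\varphi\in D(M^*)$ and justifying the duality and product rules under the sole assumption $\varphi\in\mathbb D^{1,p}$. This is exactly where the full strength of Hypothesis \ref{h1}, namely $\tfrac{u}{\gamma}\in D(M_p^*)$ for \emph{every} $p\ge1$, is needed, since the freedom in $p$ lets one match Hölder exponents so that all pairings, and the image $M^*(\tfrac{u}{\gamma}\varphi)$, are integrable. By contrast, the chain rule for $M[\Psi(g)]$ and the restriction of \eqref{e1.6} to $g^{-1}(I)$ require only mild care, thanks to the compact support of $\beta$.
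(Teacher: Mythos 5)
Your argument is correct and rests on exactly the same core identity as the paper's proof: the chain rule applied to a primitive of a bump function supported in $I$, the relation $\langle Mg,u\rangle=\gamma$ on $g^{-1}(I)$, and the duality between $M$ and $M^*$, landing on $\int_H \Psi(g)\,M^*\!\left(\tfrac{u}{\gamma}\varphi\right){\rm d}\mu$. The only difference is packaging: the paper takes the bump to be $\tfrac1\epsilon{\mathds 1}_{[r,r+\epsilon]}$ (so your $\Psi$ is its $h_\epsilon$) and finishes with a mean-value/continuity argument on $\tfrac1\epsilon\int_r^{r+\epsilon}(\cdots)\,{\rm d}s$, whereas you take smooth compactly supported $\beta$ and finish by identifying the distributional derivative and checking that the candidate $G$ is continuous. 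The two endgames need the same input --- that the law of $g$ has no atoms in $I$, which both you and the paper extract from the case $\varphi=1$ --- and your smooth test functions spare you the paper's caveat about applying the chain rule to the merely Lipschitz $h_\epsilon$.
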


\begin{proof}
Fix $\epsilon_0 < {\rm dist}(r,I^c)$  \footnote{By $I^c$ we denote the complement of $I.$}; for every $\epsilon < \epsilon_0$ we write
\begin{align*}
\frac1\epsilon (F_\varphi(r+\epsilon)-F_\varphi(r)) = \frac1\epsilon \int_{H}{\mathds 1}_{\{r<g\le r+\epsilon\}}\,\varphi\,d\mu=\int_{H}h'_\epsilon(g)\,\varphi\,{\rm d} \mu,
\end{align*}
where
\begin{align*}
h_\epsilon(z) = \frac1\epsilon \int_{-\infty}^z {\mathds 1}_{[r,r+\epsilon]}(s) \, {\rm d} s.
\end{align*}
Since $h_\epsilon$ is Lipschitz continuous, we can apply the chain rule (formally, but the result can be obtained by approximating $h_\epsilon$ with functions in $C^1(\R)$)
\begin{equation}
\label{e1.8}
\varphi \, M(h_\epsilon(g)) = \varphi \, h_\epsilon'(g) Mg.
\end{equation}
Then, multiplying both sides of \eqref{e1.8} scalarly by $u$   it follows that
\begin{equation}
\label{e2.3c}
\varphi(x) \, \langle ((Mh_\epsilon(g))(x), u(x)    \rangle = \varphi(x) \, h'_\epsilon(g(x))\langle  (Mg)(x),u(x)  \rangle,\quad \forall \,x\in H.
\end{equation}
For $x \in g^{-1}(I)$, the right-hand side of previous equation is equal to 
\begin{align}\label{eq:s1416-1}
\varphi(x) \, h'_\epsilon(g(x)) \, \gamma(x)
\end{align}
thanks to Hypothesis \ref{h1}, while it is equal to
0 for every $x \not\in g^{-1}(I)$, by the definition of $h_\epsilon'$; however, since clearly also \eqref{eq:s1416-1} vanishes for $x \not\in g^{-1}(I)$,
we can use it on the whole $H$ to get
\begin{equation}
\label{e1.8b}
\varphi(x) \, h_\epsilon'(g(x)) = \varphi(x) \, \langle (M(h_\epsilon (g))(x), \tfrac{u(x)}{\gamma(x)}    \rangle,\quad \forall x\in H,\ \gamma(x) \not= 0.
\end{equation}
Integrating with respect to $\mu$ over $H$, yields
  \begin{equation}
\label{e1.11}
\int_H  \varphi(x) \, h_\epsilon'(g(x)) \, \mu(dx) =  \int_H  \varphi(x) \, \langle Mh_\epsilon(g(x)), \tfrac{u(x)}{\gamma(x)} \rangle \, \mu(dx).
\end{equation}


Notice that $\varphi \in \mathbb D^{1,p}(H)$ by assumption, while $\frac{u}{\gamma} \in D(M^*_q)$ for any $q \ge 1$ by Hypothesis \ref{h1},
therefore, H\"older's inequality implies that $\tfrac{u}{\gamma} \, \varphi \in D(M_q^*)$ for any $q > p$.
So, by duality and using Fubini's theorem, we have
\begin{align}
\frac1\epsilon (F_\varphi(r+\epsilon)&-F_\varphi(r)) = \int_H \varphi(x) \, h_\epsilon'(g(x)) \, \mu(dx) 
\nonumber \\
=&  \int_H \varphi(x) \, \langle Mh_\epsilon(g(x)), \tfrac{u(x)}{\gamma(x)} \rangle \, \mu(dx)
\nonumber \\
=& \int_H M^*\left(\varphi(x) \,\tfrac{u(x)}{\gamma(x)}\right) \, h_\epsilon(g(x)) \, \mu(dx)
\nonumber \\
=& \int_H \frac{1}{\epsilon} \int_{\R} M^*\left(\varphi(x) \,\tfrac{u(x)}{\gamma(x)}\right) \, \mathds 1_{[r,r+\epsilon]}(s) \mathds 1_{\{g(x) \ge s\}} \, ds \, \mu(dx)
\nonumber \\
=& \frac{1}{\epsilon} \int_r^{r+\epsilon} \int_H  M^*\left(\varphi(x) \,\tfrac{u(x)}{\gamma(x)}\right) \, \mathds 1_{\{g(x) \ge s\}} \, \mu(dx) \, {\rm d} s. \label{eq:s1416-2}
\end{align}
If we could apply the integral mean theorem to the right-hand side of \eqref{eq:s1416-2}, then
letting $\epsilon \to 0$ we obtain \eqref{e1.13}. This requires a little work.

Let $\varphi = 1$ and notice that \eqref{eq:s1416-2} implies the continuity of the mapping
\begin{align*}
s \mapsto F_1(s), \qquad s \in [r,r+\epsilon];
\end{align*}
however, by definition \eqref{e1.1}, $F_1(s) = \mu\{g \le s\}$, which implies the continuity of the integrand function in the right-hand side of \eqref{eq:s1416-2}
and, therefore, the proof is concluded.

By a similar argument, by using again H\"older's inequality, we can further prove continuity and even h\"olderianity of $F'_\varphi(\cdot)$.
\end{proof}

Let us introduce the following notation. For $\varphi \in \mathbb D^{1,p}(H,\mu)$, $p \ge 1$, we set
\begin{align*}
\rho_\varphi(r) = F'_\varphi(r) = \int_{\{g \ge r\}} M^*\left(\tfrac{u}{\gamma} \, \varphi\right) \, {\rm d} \mu, \qquad r \in I.
\end{align*}

 \begin{Remark}
\label{r1.1b}
\em 
 By Lemma \ref{l2.1} it follows that for all $\varphi\in \mathbb D^{1,p}(H,\mu)$ the measure   $(\varphi\mu)\circ g^{-1}$ is absolutely continuous with respect to the Lebesgue measure in $I$ having a continuous density $\rho_\varphi$.
\end{Remark}

Notice that, in particular, 
\begin{align*}
\rho_1(r) = \int_{\{g \ge r\}} M^*\left(\tfrac{u}{\gamma} \right) \, {\rm d} \mu, \qquad r \in I.
\end{align*}
By  \eqref{e1.13} it follows that

\begin{Corollary}
\label{c2.2c}
Let $\varphi\in \mathbb D^{1,p}(H,\mu)$, $r\in I$. Then there exists a constant $K>0$  and $q>p$ such that
\begin{equation}
\label{e2.7c}
|\rho_\varphi(r)|\le K\|\varphi\|_{\mathbb D^{1,q}(H,\mu)}.
\end{equation}
\end{Corollary}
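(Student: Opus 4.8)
The plan is to take the representation of $\rho_\varphi$ furnished by Lemma \ref{l2.1}, namely \eqref{e1.13}, and to estimate the integrand in $L^1(H,\mu)$ after discarding the domain of integration. Since $\{g\ge r\}\subseteq H$, the triangle inequality for integrals gives
\[
|\rho_\varphi(r)|\le\int_H\Big|M^*\Big(\tfrac{u}{\gamma}\,\varphi\Big)\Big|\,{\rm d}\mu=\Big\|M^*\Big(\tfrac{u}{\gamma}\,\varphi\Big)\Big\|_{L^1(H,\mu)}.
\]
Observe that dropping the set $\{g\ge r\}$ already removes every dependence on $r$, so that the constant $K$ produced below will be uniform in $r\in I$.

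Next I would open the Skorokhod integral of the product by the Leibniz rule for $M^*$,
\[
M^*\Big(\tfrac{u}{\gamma}\,\varphi\Big)=\varphi\,M^*\Big(\tfrac{u}{\gamma}\Big)-\Big\langle M\varphi,\tfrac{u}{\gamma}\Big\rangle.
\]
This is precisely the identity that underlies the duality step in the proof of Lemma \ref{l2.1}; it is legitimate because $\varphi\in\mathbb D^{1,p}(H,\mu)$ while $\tfrac{u}{\gamma}\in D(M_s^*)$ for every $s\ge1$ by Hypothesis \ref{h1}, which is exactly what was used there to conclude $\tfrac{u}{\gamma}\,\varphi\in D(M_q^*)$ for $q>p$. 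The virtue of this expansion, as opposed to a direct appeal to Meyer's inequality, is that it calls only on membership of $\tfrac{u}{\gamma}$ in the domain of the adjoint, and not on any first-order Sobolev regularity of $\tfrac{u}{\gamma}$, which Hypothesis \ref{h1} does not grant.

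It then remains to bound the two terms by H\"older's inequality with conjugate exponents $q,q'$:
\[
\Big\|\varphi\,M^*\Big(\tfrac{u}{\gamma}\Big)\Big\|_{L^1}\le\|\varphi\|_{L^q}\,\Big\|M^*\Big(\tfrac{u}{\gamma}\Big)\Big\|_{L^{q'}},\qquad
\Big\|\Big\langle M\varphi,\tfrac{u}{\gamma}\Big\rangle\Big\|_{L^1}\le\|M\varphi\|_{L^q(H;H)}\,\Big\|\tfrac{u}{\gamma}\Big\|_{L^{q'}(H;H)}.
\]
Both $\|M^*(\tfrac{u}{\gamma})\|_{L^{q'}}$ and $\|\tfrac{u}{\gamma}\|_{L^{q'}(H;H)}$ are finite and independent of $\varphi$ and $r$, since Hypothesis \ref{h1} places $\tfrac{u}{\gamma}$ in $D(M_s^*)$ for all $s$, hence both $\tfrac{u}{\gamma}$ and $M^*(\tfrac{u}{\gamma})$ in every $L^s$. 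Taking $K$ to be the larger of these two numbers and recalling that $\|\varphi\|_{L^q}$ and $\|M\varphi\|_{L^q(H;H)}$ are each dominated by $\|\varphi\|_{\mathbb D^{1,q}(H,\mu)}$, we arrive at \eqref{e2.7c}.

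The only delicate point, and the place where the work actually sits, is the exponent bookkeeping. One must match the conjugate pair $(q,q')$ so that the factors carrying $\tfrac{u}{\gamma}$ stay finite while the factors carrying $\varphi$ are controlled by a single Sobolev norm; it is this matching, together with the requirement that $\tfrac{u}{\gamma}\,\varphi$ lie in the domain of the Skorokhod integral for the representation \eqref{e1.13} to make sense, that forces the strict inequality $q>p$ rather than $q=p$. Once the exponents are fixed the estimate is immediate, and, as already noted, the resulting constant does not depend on $r$.
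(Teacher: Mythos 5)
Your argument is correct and is essentially the paper's intended one: the paper offers no proof beyond ``By \eqref{e1.13} it follows that,'' and your expansion --- dominate the integral over $\{g\ge r\}$ by the $L^1(H,\mu)$ norm, open $M^*\bigl(\tfrac{u}{\gamma}\,\varphi\bigr)$ with the Skorokhod product rule, and apply H\"older using that $\tfrac{u}{\gamma}\in D(M_s^*)$ for all $s$ --- is exactly the mechanism already used in the duality step of Lemma \ref{l2.1}. Your observation that the resulting constant $K$ is uniform in $r\in I$ is a harmless (and correct) sharpening of the statement, which in Proposition \ref{p1.2c} is only used with an $r$-dependent constant.
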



The following lemma will be useful later.

\begin{Lemma}
\label{l2.3c}
Assume, besides Hypothesis \ref{h1}, that $\varphi\in \mathbb D^{1,p}(H,\mu)$. Then there is a continuous version of the function
\begin{equation*}
\E[\varphi|g=r]\,\rho_1(r),\quad r\in I 
\end{equation*}
such that
\begin{equation}
\label{e2.10b}
 \rho_\varphi(r) =\E[\varphi|g=r]\,\rho_1(r),\quad \forall\; r\in I.
\end{equation}
If in addition $\varphi\in UC_b(H)$  we have
\begin{equation}
\label{e2.10a}
|\rho_{\varphi}(r)|\le \|\varphi\|_\infty\,\rho_1(r),\qquad \forall r\in I,\;\forall \varphi\in \mathbb D^{1,2}(H,\mu).
\end{equation}
\end{Lemma}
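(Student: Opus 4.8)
The plan is to identify the announced continuous version with the density $\rho_\varphi$ already produced in Lemma \ref{l2.1}, and to read off both the identity \eqref{e2.10b} and the bound \eqref{e2.10a} from a single disintegration identity. First I would record that, by Remark \ref{r1.1b}, the image measure $(\varphi\mu)\circ g^{-1}$ has continuous density $\rho_\varphi$ on $I$, so that for every bounded Borel $\psi$ supported in $I$
\[
\int_I\psi(r)\,\rho_\varphi(r)\,{\rm d}r=\int_H\psi(g(x))\,\varphi(x)\,\mu(dx).
\]
Taking $\varphi\equiv1$ shows in particular that $\mu\circ g^{-1}$ restricted to $I$ has continuous density $\rho_1$.

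Next I would bring in the conditional expectation. Disintegrating $\mu$ along the real-valued map $g$, the law of $g$ is $\mu\circ g^{-1}$ and, by definition of $\E[\varphi\,|\,g=r]$,
\[
\int_H\psi(g)\,\varphi\,{\rm d}\mu=\int_\R\psi(r)\,\E[\varphi\,|\,g=r]\,{\rm d}(\mu\circ g^{-1})(r)=\int_I\psi(r)\,\E[\varphi\,|\,g=r]\,\rho_1(r)\,{\rm d}r.
\]
Comparing with the previous display for arbitrary test $\psi$ gives $\rho_\varphi(r)=\E[\varphi\,|\,g=r]\,\rho_1(r)$ for Lebesgue-almost every $r\in I$. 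Since $\rho_\varphi$ is continuous by Lemma \ref{l2.1}, I would simply declare $\rho_\varphi$ to be the sought continuous version of $r\mapsto\E[\varphi\,|\,g=r]\,\rho_1(r)$; the identity \eqref{e2.10b} then holds for every $r\in I$.

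For the bound \eqref{e2.10a}, rather than invoking the contractivity of conditional expectation pointwise, I would argue directly at the level of test functions. For $\psi\ge0$ one has $\int_H\psi(g)\,\varphi\,{\rm d}\mu\le\|\varphi\|_\infty\int_H\psi(g)\,{\rm d}\mu$ (as $\varphi\le\|\varphi\|_\infty$ and $\psi(g)\ge0$), and the same estimate holds with $-\varphi$ in place of $\varphi$, whence
\[
\left|\int_I\psi(r)\,\rho_\varphi(r)\,{\rm d}r\right|\le\|\varphi\|_\infty\int_I\psi(r)\,\rho_1(r)\,{\rm d}r\qquad\text{for all }\psi\ge0.
\]
This forces $|\rho_\varphi(r)|\le\|\varphi\|_\infty\,\rho_1(r)$ for almost every $r$, and since both $\rho_\varphi$ and $\rho_1$ are continuous (the latter being nonnegative) the inequality propagates to every $r\in I$, including the points where $\rho_1$ vanishes.

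The step I expect to cost the most care is the passage from the almost-everywhere identity to a genuine pointwise statement: it hinges on the continuity of $\rho_\varphi$ from Lemma \ref{l2.1} and on checking that the disintegration of $\mu$ along the scalar map $g$ is legitimate, so that $\E[\varphi\,|\,g=r]$ is well defined as a version of the conditional expectation with the integral representation used above.
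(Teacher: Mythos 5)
Your proof is correct and takes essentially the same route as the paper: both arguments rest on the disintegration identity $\int_H\psi(g)\,\varphi\,{\rm d}\mu=\int_I\psi(r)\,\E[\varphi|g=r]\,\rho_1(r)\,{\rm d}r$, deduce \eqref{e2.10b} almost everywhere, and upgrade it to all of $I$ using the continuity of $\rho_\varphi$ from Lemma \ref{l2.1}. The only cosmetic differences are that the paper tests against $\tfrac1{2\epsilon}{\mathds 1}_{[r-\epsilon,r+\epsilon]}$ and invokes Lebesgue differentiation where you test against arbitrary $\psi$, and that for \eqref{e2.10a} the paper uses the pointwise contraction $|\E[\varphi|g]|\le\|\varphi\|_\infty$ while you derive the same a.e.\ bound through test functions before propagating it by continuity.
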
\begin{proof}
Write for $r\in I$
\begin{equation}
\label{e1.2b}
\begin{aligned}
\ds \rho_\varphi(r) &=\lim_{\epsilon\to 0}\frac1{2\epsilon}\int_H{\mathds 1}_{\{r-\epsilon\le g\le r+\epsilon\}}\,\varphi\,{\rm d} \mu\\
\ds &=\lim_{\epsilon\to 0}\frac1{2\epsilon}\int_H{\mathds 1}_{\{r-\epsilon\le g\le r+\epsilon\}}\,\E[\varphi|g]\,{\rm d} \mu\\
\ds &=\lim_{\epsilon\to 0}\frac1{2\epsilon}\int_{r-\epsilon}^{r+\epsilon}\,\E[\varphi|g=s]\,\rho_1(s)ds=\E[\varphi|g=r]\,\rho_1(r),\quad r\mbox{\rm--a.s. in}\;I.
\end{aligned}
\end{equation}
Since $\rho_\varphi(\cdot)$ is continuous, the first 
statement of the Lemma follow from Lemma \ref{l2.1}.

To show \eqref{e2.10a} write
\begin{equation*}
|\E[\varphi|g]\le \E[|\varphi||g]\le \|\varphi\|_\infty.
\end{equation*}
Therefore the conclusion follows.
\end{proof}

\subsection{The surface measure}

Now we are going to  prove the main result of this section, namely that   the positive functional $r\to \rho_\varphi(r)$ is  in fact the integral of $\varphi$  with respect to a given (surface) measure $\sigma_r$. The key point is to extend the functional to  all $\varphi\in UC_b(H)$ as the following proposition shows.
\begin{Proposition}
 \label{p5.6}
  For any $\varphi\in UC_b(H)$, $F_{\varphi}$ is continuously differentiable on $I$.
 \end{Proposition}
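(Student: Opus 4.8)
The plan is to reduce the claim to the case $\varphi\in\mathbb D^{1,p}(H,\mu)$ already settled in Lemma \ref{l2.1}, via a density argument whose engine is the sup-norm estimate \eqref{e2.10a} of Lemma \ref{l2.3c}: crucially, that bound controls $\rho_\varphi$ by $\|\varphi\|_\infty$ rather than by a Sobolev norm. The reason for working in $UC_b(H)$, rather than merely in $C_b(H)$, is precisely that it permits a \emph{uniform} approximation by functions to which Lemma \ref{l2.1} applies, and the sup-norm estimate then transfers this uniform control to the derivatives.

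First I would regularize. For $\varphi\in UC_b(H)$ I set $\varphi_n(x)=\inf_{y\in H}\{\varphi(y)+n\,|x-y|\}$, the Moreau--Yosida (inf-convolution) approximants. The standard properties of this construction on a metric space give that each $\varphi_n$ is $n$-Lipschitz, that $\inf\varphi\le\varphi_n\le\varphi$ so that $\|\varphi_n\|_\infty\le\|\varphi\|_\infty$, and — using the uniform continuity of $\varphi$ — that $\varphi_n\to\varphi$ \emph{uniformly} on $H$. The regularity input I would invoke is that a bounded Lipschitz function on $H$ belongs to $\mathbb D^{1,p}(H,\mu)$ for every $p\ge 1$: it is differentiable $\mu$-a.e.\ with $|M\varphi_n|=|Q^{1/2}D\varphi_n|$ bounded, hence lies in every $L^p$, and is approximable by smooth functions in the graph norm. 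Thus each $\varphi_n\in\mathbb D^{1,p}(H,\mu)\cap UC_b(H)$, and Lemma \ref{l2.1} applies, giving $F_{\varphi_n}\in C^1(I)$ with $F'_{\varphi_n}=\rho_{\varphi_n}$ continuous.

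Next I would pass to the limit. Since $\varphi_n-\varphi_m\in\mathbb D^{1,2}(H,\mu)\cap UC_b(H)$, the estimate \eqref{e2.10a} yields $|\rho_{\varphi_n}(r)-\rho_{\varphi_m}(r)|=|\rho_{\varphi_n-\varphi_m}(r)|\le\|\varphi_n-\varphi_m\|_\infty\,\rho_1(r)$. On any compact interval $[a,b]\subset I$ the continuous function $\rho_1$ is bounded, so $(\rho_{\varphi_n})$ is uniformly Cauchy on $[a,b]$ and converges there, uniformly, to a continuous limit that I denote $\rho_\varphi$. At the same time $|F_{\varphi_n}(r)-F_\varphi(r)|\le\int_{\{g\le r\}}|\varphi_n-\varphi|\,{\rm d}\mu\le\|\varphi_n-\varphi\|_\infty\to 0$ uniformly in $r$, so $F_{\varphi_n}\to F_\varphi$ pointwise. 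The classical theorem on differentiating a limit — pointwise convergence of $F_{\varphi_n}$ together with locally uniform convergence of the derivatives $F'_{\varphi_n}=\rho_{\varphi_n}$ — then shows that $F_\varphi$ is differentiable on $I$ with $F'_\varphi=\rho_\varphi$, and continuity of $\rho_\varphi$ as a locally uniform limit of continuous functions gives $F_\varphi\in C^1(I)$.

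The one delicate point — and the reason I would avoid the Ornstein--Uhlenbeck semigroup as a regularizer — is securing \emph{uniform} (sup-norm) convergence of the approximants simultaneously with their membership in $\mathbb D^{1,p}(H,\mu)$. The OU-smoothings $T_t\varphi$ land in every $\mathbb D^{1,p}(H,\mu)$, but, because of the contraction factor $e^{-t}x$ in Mehler's formula, they fail to converge to $\varphi$ uniformly on all of $H$ in infinite dimensions; yet uniform convergence is exactly what \eqref{e2.10a} requires in order to control $\rho_{\varphi_n}-\rho_{\varphi_m}$. The inf-convolution circumvents this, at the price of the standard (but worth stating) fact that bounded Lipschitz functions belong to $\mathbb D^{1,p}(H,\mu)$.
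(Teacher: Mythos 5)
Your proof is correct and follows essentially the same route as the paper: approximate $\varphi$ uniformly by functions in $\mathbb D^{1,p}(H,\mu)\cap UC_b(H)$, apply Lemma \ref{l2.1} to each approximant, and use the sup-norm bound \eqref{e2.10a} to get local uniform convergence of the derivatives $\rho_{\varphi_n}$, hence $F_\varphi\in C^1(I)$. The only difference is that the paper simply takes a sequence in $UC^1_b(H)$ converging uniformly to $\varphi$, whereas you build the approximants explicitly by inf-convolution and invoke the fact that bounded Lipschitz functions lie in $\mathbb D^{1,p}(H,\mu)$ (the Rademacher-type theorem of Enchev--Stroock, which is in the paper's bibliography); this makes the density step, which the paper leaves implicit and which is not entirely trivial in infinite dimensions, fully rigorous.
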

\begin{proof}
If $\varphi\in \mathbb  D^{1,p}(H,\mu)$ the {result follows from Lemma \ref{l2.1}}.  Now let   $\varphi\in UC_b(H)$ and let $(\varphi_n)$ be a sequence in $UC_b^1(H)$  convergent to $\varphi$ in $UC_b(H)$. Since $\varphi_n\in  D^{1,p}(H,\mu)$ by \eqref{e2.10a} we have
\[
|\rho_{\varphi_n}(r)-\rho_{\varphi_m}(r)|=|\rho_{\varphi_n-\varphi_m}(r)|\le \|\varphi_n-\varphi_m\|_\infty\;\rho_1(r),\quad\forall\;r\in I,
\]
hence $\{\rho_{\varphi_n}\}$ is a Cauchy sequence in $UC_b(H)$ and the conclusion follows.

 \end{proof}
 \begin{Remark}
 \label{r2.5c}
 \em Assume that  $M^*(\tfrac{u}\gamma)\in UC_b(H)\cup \mathbb D^{1,2}(H,\mu).$
 Then $F_{\varphi}$ is twice continuously differentiable on $I$.
 
 \end{Remark}
Now we we can prove 
\begin{Theorem}
\label{t5.7}
Assume  Hypothesis \ref{h1}. Then for every $r\in I$ there exists a unique   Borel measure $\sigma_r$ on $H$  such that 
\begin{equation}
\label{e5.11}
F_\varphi '( r)=\rho_\varphi(r)  =\int_{H} \varphi(x)\,\sigma_r(dx),\quad\forall\; \varphi\in  UC_b(H). 
\end{equation}
Moreover, $\sigma_r(H)=\rho_1(r)$ and if $g$ is continuous the support of $\sigma_r$ is included in $\{g=r\}$.   
\end{Theorem}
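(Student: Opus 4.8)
The plan is to realize the positive functional $\varphi\mapsto\rho_\varphi(r)$ as integration against a measure by exhibiting $\sigma_r$ as a weak limit of explicit finite measures. Fix $r\in I$ and set $L_r(\varphi):=\rho_\varphi(r)$. By Proposition \ref{p5.6} this is defined for every $\varphi\in UC_b(H)$, it is clearly linear, and by \eqref{e2.10a} it is bounded, $|L_r(\varphi)|\le\|\varphi\|_\infty\,\rho_1(r)$, with $L_r(1)=\rho_1(r)$. For $\epsilon<\mathrm{dist}(r,I^c)$ I introduce the genuine finite Borel measures
\[
\nu_\epsilon^r(dx):=\tfrac1{2\epsilon}\,\mathbf 1_{\{|g-r|\le\epsilon\}}(x)\,\mu(dx).
\]
From \eqref{e1.2b} (for $\varphi\in\mathbb D^{1,2}(H,\mu)$), and then for every $\varphi\in UC_b(H)$ by density of $UC_b^1(H)$ together with the uniform bound \eqref{e2.10a}, one has $\int_H\varphi\,d\nu_\epsilon^r\to L_r(\varphi)$ as $\epsilon\to0$; in particular $\nu_\epsilon^r(H)\to\rho_1(r)$, and $L_r(\varphi)\ge0$ whenever $\varphi\ge0$.

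Next I would obtain $\sigma_r$ as the weak limit of the $\nu_\epsilon^r$. The key structural fact is that $\mu$, being a Gaussian measure on the separable Hilbert (hence Polish) space $H$, is a Radon measure; I use this to show that the family $\{\nu_\epsilon^r\}_{\epsilon}$ is uniformly tight. Granting tightness, Prokhorov's theorem provides, along any sequence $\epsilon_n\to0$, a subsequence with $\nu_{\epsilon_n}^r\Rightarrow\sigma$ for some finite Borel measure $\sigma$; passing to the limit in $\int_H\varphi\,d\nu_{\epsilon_n}^r$ gives $\int_H\varphi\,d\sigma=L_r(\varphi)$ for all $\varphi\in UC_b(H)$. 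Since finite Borel measures on a separable metric space are determined by their integrals against $UC_b(H)$, the limit does not depend on the subsequence; this simultaneously proves uniqueness of $\sigma_r$, establishes the full convergence $\nu_\epsilon^r\Rightarrow\sigma_r$, and yields \eqref{e5.11}. The total mass follows from tightness, $\sigma_r(H)=\lim_\epsilon\nu_\epsilon^r(H)=\rho_1(r)$. For the support, when $g$ is continuous each $\nu_\epsilon^r$ is carried by the closed slab $\{|g-r|\le\epsilon\}$, so $\nu_\epsilon^r(\{|g-r|>\delta\})=0$ for $\epsilon<\delta$; the portmanteau inequality on the open set $\{|g-r|>\delta\}$ gives $\sigma_r(\{|g-r|>\delta\})=0$, and letting $\delta\to0$ yields $\sigma_r(\{g\neq r\})=0$, i.e.\ $\mathrm{supp}\,\sigma_r\subseteq\{g=r\}$.

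The main obstacle is precisely the tightness step, and hence the realization of the functional as a countably additive measure. Since $H$ is infinite-dimensional it is not locally compact, so the Riesz--Markov theorem does not apply directly to $L_r$ on $UC_b(H)$; moreover the normalization $\tfrac1{2\epsilon}$ prevents one from simply dominating $\nu_\epsilon^r$ by $\mu$. I expect to overcome this by combining the Radon property of $\mu$ with the continuity of the densities from Proposition \ref{p5.6}: given $\eta>0$ I choose a compact $K$ with $\mu(H\setminus K)$ small and a Lipschitz cut-off $\psi\in UC_b(H)$ with $\mathbf 1_{H\setminus K}\le\psi\le1$, so that $\limsup_{\epsilon\to0}\nu_\epsilon^r(H\setminus K)\le\lim_\epsilon\int_H\psi\,d\nu_\epsilon^r=\rho_\psi(r)$; letting $K$ exhaust $H$ in $\mu$-measure and using that $\int_I\rho_\psi\,ds\le\mu(H\setminus K)$ controls these limits, one forces the tail masses to zero and obtains uniform tightness. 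This is the one genuinely analytical point; the remaining steps are the soft measure-theoretic arguments above.
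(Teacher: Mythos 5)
Your architecture differs from the paper's: you realize $\sigma_r$ as a Prokhorov weak limit of the slab measures $\nu_\epsilon^r$, whereas the paper restricts the limiting functional $\varphi\mapsto\rho_\varphi(r)$ to an exhausting sequence of compacts $K_n$, applies the Riesz representation theorem on each $C(K_n)$, and concludes by showing $\rho^n_1(r)\uparrow\rho_1(r)$. Both routes hinge on exactly the same quantitative fact --- that no mass of the functional escapes from compact sets --- and it is precisely there that your argument has a genuine gap. You bound $\limsup_{\epsilon\to0}\nu_\epsilon^r(H\setminus K)\le\rho_\psi(r)$ with a cut-off $\psi$ and then try to make $\rho_\psi(r)$ small via $\int_I\rho_\psi(s)\,{\rm d}s\le\int_H\psi\,{\rm d}\mu\le\mu(H\setminus K)$. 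But a small integral of the continuous density $\rho_\psi$ over $I$ says nothing about its value at the fixed point $r$: by Lemma \ref{l2.3c}, $\rho_\psi(r)=\E[\psi\,|\,g=r]\,\rho_1(r)$, and the \emph{conditional} mass of $H\setminus K$ given $g=r$ need not be small just because its unconditional mass $\mu(H\setminus K)$ is --- the density can concentrate near $r$. Rescuing this would require a modulus of continuity for $\rho_\psi$ that is uniform over your family of cut-offs, which you do not have, since their Lipschitz (hence $\mathbb D^{1,q}$) norms are uncontrolled. So the step ``letting $K$ exhaust $H$ \dots forces the tail masses to zero'' does not follow as written.

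The fix, and the route the paper takes (via \cite[Theorem 3.5]{DaLuTu14}), is to use the pointwise bound of Corollary \ref{c2.2c}, $|\rho_\varphi(r)|\le K\|\varphi\|_{\mathbb D^{1,q}(H,\mu)}$, applied not to arbitrary compacts but to a specially constructed sequence $K_n\uparrow H$ admitting cut-offs $\theta_n$ with $\theta_n=1$ on $K_n$ and $\|1-\theta_n\|_{\mathbb D^{1,q}(H,\mu)}\to0$; this is what yields $\rho^n_1(r)\uparrow\rho_1(r)$ in the paper and would equally deliver your tightness. The remaining, soft parts of your proposal --- identification of the limit via \eqref{e1.2b} and \eqref{e2.10a}, uniqueness from the fact that finite Borel measures on a separable metric space are determined by their integrals against $UC_b(H)$, the total mass, and the portmanteau argument for the support --- are correct and indeed more explicit than the paper's sketch.
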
  

\begin{proof}
 The proof  is  similar 
 to that of \cite[Theorem 3.5]{DaLuTu14}, so it will be only sketched. {Let us fix $r\in I$. By Proposition \ref{p5.6} the functional
\begin{equation*}
 UC_b(H)\to \R,\quad \varphi\to \rho_r(\varphi),
\end{equation*}
 is well defined and   clearly positive. To show that it is a measure we follow} a classical method.
First  we construct  a suitable increasing sequence $(K_n)$ of compact sets  of $H$ converging to $H$. Then we introduce  the restrictions    {$\rho^n_\varphi(r)$ of  $\rho_\varphi(r)$ to $K_n$   for all  $\varphi\ge 0$ setting
\begin{equation*}
\rho^n_\varphi(r)=\inf\Big\{\rho_\psi(r):\;\psi\in  UC_b(H),\;\psi= \varphi\;\mbox{\rm on}\;K_n,\;\;\psi\ge 0\;\mbox{\rm on}\;H \Big\},
\end{equation*}
while if $\varphi $ takes  both positive and negative values,  $\rho^n_\varphi(r)$ is defined by
\begin{equation*}
\rho^n_\varphi(r) = (\rho^n_\varphi(r))^+ - (\rho^n_\varphi(r))^-, 
\end{equation*}
where  $\varphi^+ $ and $ \varphi^-$ denote the positive and the negative part of  $\varphi$.
  Then  $\rho_\varphi(r)$ is a positive linear functional in  $C(K_n)$ as easily checked. Now for each $r\in I$  $\varphi\to \rho^n_\varphi(r)$}  are measures in view of the Riesz representation theorem.
  Finally, it is not difficult to show that  {$\rho^n_1(r)\uparrow \rho_1(r),$ which implies that $\rho_\varphi(r)$ is a measure as well. }
\end{proof}

In the applications it is important to know whether  a Borel function $\varphi:H\to \R$ (not necessarily belonging to $UC_b(H)$)  has a {\em trace} on the surface $\{g=r\}$ for some $r\in I$. When the Malliavin condition \eqref{e1.6a} is fulfilled this problem was investigated in \cite{CeLu14}, see also \cite{DaLuTu14}. Given $r\in I$, we shall say that  $\varphi$ possesses a trace $T\varphi$ on  $\{g=r\}$ if there exists a sequence $(\varphi_n)\subset UC_b(H)$ such that
$$
\varphi_n\to T\varphi,\quad\mbox{\rm on}\; L^1(H,\mathcal B(H), \sigma_r).
$$
\begin{Proposition}
\label{p1.2c}
Let   $\varphi\in \mathbb D^{1,2}(H,\mu)$  and let   $(\varphi_n)\subset C^1_b(H)$  be a sequence convergent   to  $\varphi$ in $ \mathbb D^{1,2}(H,\mu)$. Then    $(\varphi_n)$ is Cauchy in $L^1(H,\sigma_r)$, so that
$\varphi$ possesses a trace $T\varphi$ on  $\{g=r\}$.
\end{Proposition}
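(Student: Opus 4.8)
The plan is to reduce the Cauchy property of $(\varphi_n)$ in $L^1(H,\sigma_r)$ to the single a priori estimate
\begin{equation*}
\int_H |\psi|\, {\rm d}\sigma_r \le C\, \|\psi\|_{\mathbb D^{1,2}(H,\mu)}, \qquad \psi\in C^1_b(H),
\end{equation*}
with a constant $C=C(r)$ independent of $\psi$. Granting this, I would apply it to $\psi=\varphi_n-\varphi_m\in C^1_b(H)$ to get $\|\varphi_n-\varphi_m\|_{L^1(\sigma_r)}\le C\,\|\varphi_n-\varphi_m\|_{\mathbb D^{1,2}(H,\mu)}\to 0$, so that $(\varphi_n)$ is Cauchy in the complete space $L^1(H,\sigma_r)$ and hence converges to some $T\varphi$, which is the desired trace.

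The real work is the displayed estimate. First I would note that, since $\psi\in C^1_b(H)$, the function $\psi$ is bounded and Lipschitz, hence so is $|\psi|$; in particular $|\psi|\in UC_b(H)$, and by the chain rule for the Malliavin derivative applied to the $1$-Lipschitz map $t\mapsto|t|$ one has $|\psi|\in\mathbb D^{1,2}(H,\mu)$ with $M|\psi|=\mathrm{sgn}(\psi)\,M\psi$, so that $|M|\psi||=|M\psi|$ almost everywhere. Since $|\psi|\in UC_b(H)$, Theorem \ref{t5.7} gives $\int_H|\psi|\,{\rm d}\sigma_r=\rho_{|\psi|}(r)$, and by Lemma \ref{l2.1} this equals $\int_{\{g\ge r\}}M^*\!\big(\tfrac{u}{\gamma}\,|\psi|\big)\,{\rm d}\mu$. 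Using the product rule $M^*\!\big(\tfrac{u}{\gamma}\,|\psi|\big)=|\psi|\,M^*\!\big(\tfrac{u}{\gamma}\big)-\langle M|\psi|,\tfrac{u}{\gamma}\rangle$ (legitimate exactly as in the proof of Lemma \ref{l2.1}, because $|\psi|$ is bounded with bounded derivative and $\tfrac{u}{\gamma}\in D(M^*_q)$ for every $q$) together with the Cauchy--Schwarz inequality, I would bound
\begin{equation*}
\rho_{|\psi|}(r)\le \|\psi\|_{L^2(\mu)}\,\big\|M^*\!(\tfrac{u}{\gamma})\big\|_{L^2(\mu)}+\|M\psi\|_{L^2(\mu;H)}\,\big\|\tfrac{u}{\gamma}\big\|_{L^2(\mu;H)},
\end{equation*}
where I used $|M|\psi||=|M\psi|$. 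Both norms of $\tfrac{u}{\gamma}$ are finite because $\tfrac{u}{\gamma}\in D(M^*_2)$ by Hypothesis \ref{h1}, and this yields the claimed estimate with $C=\sqrt2\,\max\{\|M^*(\tfrac{u}{\gamma})\|_{L^2(\mu)},\|\tfrac{u}{\gamma}\|_{L^2(\mu;H)}\}$.

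The step I expect to be the main obstacle is getting the bound in the \emph{same} norm $\|\cdot\|_{\mathbb D^{1,2}(H,\mu)}$ in which the sequence converges: Corollary \ref{c2.2c} only controls $\rho_\varphi(r)$ by a strictly higher Sobolev norm $\|\cdot\|_{\mathbb D^{1,q}}$ with $q>p$, which is useless here since convergence is merely in $\mathbb D^{1,2}$. The way around this is to bypass the Corollary and exploit directly that $\tfrac{u}{\gamma}\in D(M^*_2)$, so that splitting $M^*(\tfrac{u}{\gamma}\,|\psi|)$ by the product rule lets the $L^2$--duality close without any loss of integrability. A secondary technical point is the passage through $|\psi|$: one must justify the chain rule in $\mathbb D^{1,2}$ for the non-smooth map $|\cdot|$ (by approximation with $C^1$ functions, as was already done for $h_\epsilon$ in Lemma \ref{l2.1}) and check that the two meanings of $\rho$ — as $F'_\varphi$ in Lemma \ref{l2.1} and as integration against $\sigma_r$ in Theorem \ref{t5.7} — agree on $UC_b(H)\cap\mathbb D^{1,2}(H,\mu)$, which is exactly the content of the construction in Proposition \ref{p5.6}. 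Finally, to define $T\varphi$ unambiguously I would observe that if $(\tilde\varphi_n)\subset C^1_b(H)$ is another sequence converging to $\varphi$ in $\mathbb D^{1,2}(H,\mu)$ then $\varphi_n-\tilde\varphi_n\to 0$ in $\mathbb D^{1,2}(H,\mu)$, so the same estimate forces the two $L^1(\sigma_r)$--limits to coincide.
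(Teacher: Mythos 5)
Your proof is correct, and the overall strategy coincides with the paper's: both reduce the Cauchy property to the linear estimate $\int_H|\varphi_n-\varphi_m|\,{\rm d}\sigma_r=\rho_{|\varphi_n-\varphi_m|}(r)\le K(r)\,\|\varphi_n-\varphi_m\|_{\mathbb D^{1,2}(H,\mu)}$. The difference is that the paper simply invokes Corollary \ref{c2.2c} at this point, while you re-derive the estimate from first principles via the identity $\rho_{|\psi|}(r)=\int_{\{g\ge r\}}M^*\bigl(\tfrac{u}{\gamma}\,|\psi|\bigr)\,{\rm d}\mu$, the product rule for $M^*$, and $L^2$--duality. This extra work is not wasted: you correctly observe that Corollary \ref{c2.2c} as literally stated only controls $\rho_\psi(r)$ by $\|\psi\|_{\mathbb D^{1,q}}$ with $q>p$, which does not match the norm in which $(\varphi_n)$ converges; your computation shows that the H\"older pairing in fact closes at the exponent $p$ itself (using $\tfrac{u}{\gamma}\in D(M^*_2)$, hence $\tfrac{u}{\gamma}$ and $M^*(\tfrac{u}{\gamma})$ in $L^2$), so the corollary should be read with $\|\cdot\|_{\mathbb D^{1,p}}$ on the right-hand side, exactly as the paper implicitly uses it. You also make explicit two points the paper glosses over: the chain rule for $t\mapsto|t|$ giving $|M|\psi||\le|M\psi|$ (needed to bound $\||\varphi_n-\varphi_m|\|_{\mathbb D^{1,2}}$ by $\|\varphi_n-\varphi_m\|_{\mathbb D^{1,2}}$), and the independence of $T\varphi$ from the approximating sequence. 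In short: same route, but your version supplies the key estimate the paper delegates to a corollary whose stated exponent does not quite fit.
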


\begin{proof}

We first notice that by passing if necessary to an approximating sequence in $UC_b^1(H)$, and using Lemma \ref{l2.3c} it follows that
\begin{equation}
\label{e1.10d}
F'_\varphi(r)=\int_{\{g=r\}}\varphi\,{\rm d} \sigma_r
=\E[\varphi|g=r]\,\rho_1(r),\quad \forall\,\varphi\in UC_b(H),\;r\in I.
\end{equation}
Let now $\varphi\in \mathbb D^{1,2}(H,\mu)$  and let   $(\varphi_n)\subset C^1_b(H)$  be a sequence convergent   to  $\varphi$ in $ \mathbb D^{1,2}(H,\mu)$. {We claim} that  $(\varphi_n)$ is Cauchy in $L^1(H,\sigma_r)$. In fact
\[
\int_H |\varphi_n-\varphi_m|\,{\rm d} \sigma_r=\rho_{ |\varphi_n-\varphi_m|}(r)\le
K(r)\;\| \varphi_n-\varphi_m\|_{\mathbb D^{1,2}(H,\mu)},
\]
thanks to Corollary \ref{c2.2c}. 
\end{proof}

\section{Fulfilling Hypothesis \ref{h1}}

\subsection{The maximum of a stochastic flow}
Let us  start with some general results concerning a stochastic differential equation on $\R$
\begin{equation}
\label{e5}
{\rm d}X=b(X) \, {\rm d}t+\sigma(X) \, {\rm d}B(t),\quad X(0)=\xi\in\R,
\end{equation}
where $b$ and $\sigma$ are of class $C^2$ and Lipschitz continuous  and
 $B(\cdot)$ is a Brownian motion on $(H,\mathcal B(H),\mu)$.
 We denote by $X(t,\xi) =X(t)$ the strong solution of \eqref{e5}.

 We are going to consider the function on $H$
\begin{equation}
\label{e7}
g(x)=\sup_{t\in[0,1]} X(t)(x),\quad x\in H.
\end{equation}
Since the trajectories of $X(t)$ are $\mu$--a.s. continuous the supremum in \eqref{e7} is indeed a maximum $\mu$--a.s.

\begin{Proposition}
\label{p3.1}
Let $X$ be the solution to   \eqref{e5} and   assume that  the joint probability distributions of any order admit a density with respect to the Lebesgue measure. Then $X(t)(x),\;t\in[0,1],$ attains the maximum at a unique point $\tau_x$ of $[0,1]$  for $\mu$--almost $x\in H$.
\end{Proposition}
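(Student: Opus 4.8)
The plan is to show that the event where the maximum is attained at two or more distinct points is $\mu$-negligible, by reducing it to a countable family of events, each negligible thanks to the Markov property of $X$ and the absolute-continuity hypothesis. Since the trajectories are $\mu$-a.s. continuous, the supremum in \eqref{e7} is attained; write $M_J(x):=\sup_{t\in J}X(t)(x)$ for a subinterval $J\subseteq[0,1]$. If the maximum of $t\mapsto X(t)(x)$ were attained at two points $s_1<s_2$, then choosing a rational $q$ with $s_1<q<s_2$ we would have $M_{[0,q]}(x)=M_{[q,1]}(x)=g(x)$. Hence
\[
\{x:\ t\mapsto X(t)(x)\ \text{has a non-unique maximizer}\}\subseteq\bigcup_{q\in\mathbb{Q}\cap(0,1)}\{x:\ M_{[0,q]}(x)=M_{[q,1]}(x)\},
\]
and since this is a countable union it suffices to prove that $\mu(M_{[0,q]}=M_{[q,1]})=0$ for each fixed rational $q\in(0,1)$.

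Next I would exploit that $X(\cdot,\xi)$, the strong solution of \eqref{e5}, is a (time-homogeneous) Markov process. Conditionally on $X(q)$ the families $\{X(s):s\le q\}$ and $\{X(s):s\ge q\}$ are independent, and the post-$q$ process is distributed as the diffusion restarted from $X(q)$. Since $M_{[0,q]}$ is $\mathcal F_q$-measurable while $M_{[q,1]}$ is a functional of the post-$q$ path, conditioning on $\mathcal F_q$ gives
\[
\mu\big(M_{[0,q]}=M_{[q,1]}\big)=\E\big[\nu_{X(q)}(\{M_{[0,q]}\})\big],
\]
where $\nu_y$ denotes the law of the running maximum $\sup_{u\in[0,1-q]}Z^y(u)$ of the diffusion $Z^y$ started at $y$. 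As the law of $X(q)$ is absolutely continuous by hypothesis, it is therefore enough to prove that $\nu_y$ has no atoms for (Lebesgue-)almost every $y$: then $\nu_{X(q)}(\{a\})=0$ for every $a$, in particular for $a=M_{[0,q]}$, so the expectation vanishes.

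The atomlessness of the running maximum is the main obstacle, and it is where the absolute continuity (hence non-degeneracy of $\sigma$, which is implicit in the density assumption) of the one-dimensional marginals enters. Fix a level $c$. If $c<y$ then $\sup Z^y\ge Z^y(0)=y>c$, so $\nu_y(\{c\})=0$ trivially. If $c>y$, let $T_c$ be the first hitting time of $c$: on $\{\sup_{[0,1-q]}Z^y=c\}$ one has $T_c\le 1-q$, and by the strong Markov property at $T_c$ together with the non-degeneracy of $\sigma$ at $c$ (which forces $Z^y$ to oscillate strictly above $c$ immediately after $T_c$), the maximum can equal $c$ only if $T_c=1-q$, i.e. only if $Z^y(1-q)=c$; the latter has probability $0$ because the one-dimensional marginal admits a density. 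The boundary case $c=y$ is handled by the same oscillation property, which prevents the path started at $y$ from remaining below $y$ on any initial interval. This shows $\nu_y(\{c\})=0$ for every $c$, so each event $\{M_{[0,q]}=M_{[q,1]}\}$ is $\mu$-negligible; their countable union is negligible as well, proving uniqueness of the maximizer $\tau_x$ for $\mu$-almost every $x$.
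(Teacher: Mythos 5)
Your overall architecture is sound and genuinely different from the paper's: you reduce non-uniqueness of the maximizer to the countable family of events $\{M_{[0,q]}=M_{[q,1]}\}$, $q\in\mathbb{Q}\cap(0,1)$, and then invoke the Markov property at time $q$ to reduce each of these to the atomlessness of the law $\nu_y$ of the running maximum of the restarted diffusion. The paper never uses the Markov structure at all: it works with finite skeletons $s_1<\dots<s_m<t_1<\dots<t_n$, shows directly from the assumed joint densities that the pair of partial maxima $(g_{(s)},g_{(t)})$ has an absolutely continuous law on $\R^2$ (so the diagonal is null), and then passes to suprema over rational skeletons of two disjoint intervals. Your reduction to rational splitting times is, if anything, a cleaner way to organize the first step.

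The genuine gap is exactly at the point you call ``the main obstacle'': the atomlessness of $\nu_y$. Your justification rests on the claim that non-degeneracy of $\sigma$ is implicit in the density assumption, and that after $T_c$ the path immediately oscillates strictly above $c$. Neither is granted by the hypotheses. The proposition assumes only that the finite-dimensional laws of $X$ admit densities; this does not imply $\sigma\neq 0$ at every level $c$ relevant to the running maximum --- the paper's own third example, geometric Brownian motion, has diffusion coefficient $\sigma x$ vanishing at $0$. Where $\sigma(c)=0$ the ``immediate oscillation above $c$'' property can fail, and rescuing the argument requires a separate analysis (boundary classification or a pathwise-uniqueness/comparison argument showing such a level is either inaccessible from below or crossed by the drift), which your sketch does not contain. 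The paper's route needs nothing about $\sigma$ beyond what is literally assumed, because absolute continuity of the two-dimensional law of a pair of finite maxima follows directly from the finite-dimensional densities. So: right skeleton and a genuinely different reduction, but the central analytic step is asserted rather than proved, and the reason offered for it is not valid under the stated hypotheses.
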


\begin{proof} 
We proceed by steps. First, we consider two time points $0\le s<t$: we have that
\begin{align*}
{\mu(X(s) =X(t))} = \int_{\mathbb R} \int_{\mathbb R} {\mathds 1}_{\{x=y\}} f_{(X(t), X(s))}(x,y) \,{\rm d} x\, {\rm d} y = 0,
\end{align*}
{where $ f_{(X(t), X(s))}$ is the joint density of $(X(t),\,X(s))$}.

Next, we extend the analysis to three times.
Consider $s_1<s_2<t$ and the corresponding random variables   $g_{(s)}=\max \{X(s_i),i=1,2\}$ and $X_t$, we have
\begin{align*}
{\mu(g_{(s)}\le \xi, X(t) \le \eta)} =&
{\mu(X(s_1)\le \xi,X(s_2)\le \xi,X(t)\le \eta)}
\\
=& \int_{-\infty}^\xi\int_{-\infty}^\xi\int_{-\infty}^\eta f(x_1,x_2,y) \,{\rm d} x_1\, {\rm d} x_2 \,{\rm d} y,
\end{align*}
that has a density given by
\begin{align*}
\int_{-\infty}^\xi f(x_1,\xi,\eta)\,  {\rm d} x_1+\int_{-\infty}^\xi f(\xi,x_2,\eta)\,  {\rm d} x_2.
\end{align*}
Considering now $s_1<s_2<t_1<t_2$ and the corresponding random variables   $g_{(s)}=\max \{X(s_i),i=1,2\}$ and $g_{(t)}=\max \{X(t_i),i=1,2\}$, we have
\begin{align*}
{\mu(g_{(s)}\le \xi, g_{(t)}\le \eta) }&=
{(X(s_1)\le \xi,X(s_2)\le \xi,X(t_1)\le \eta),X(t_2)\le \eta)}
\\
=& \int_{-\infty}^\xi\int_{-\infty}^\xi\int_{-\infty}^\eta\int_{-\infty}^\eta f(x_1,x_2,y_1,y_2)\, {\rm d} x_1\, {\rm d} x_2 \, {\rm d} y_1 \, {\rm d} y_2
\end{align*}
that has a density given by
\begin{multline*}
\int_{-\infty}^\xi \int_{-\infty}^\eta f(x_1,\xi,y_1,\eta)\,{\rm d} x_1 \,  {\rm d} y_1
+\int_{-\infty}^\xi \int_{-\infty}^\eta f(x_1,\xi,\eta,y_2)\,  {\rm d} x_1 \,   {\rm d} y_2
\\
+\int_{-\infty}^\xi \int_{-\infty}^\eta f(\xi,x_2,y_1,\eta)\,   {\rm d} x_2 \,   {\rm d} y_1
+\int_{-\infty}^\xi \int_{-\infty}^\eta f(\xi,x_2,\eta,y_2)\,  {\rm d} x_2 \,{\rm d} y_2.
\end{multline*}

Let us consider a system of points \[s_1<s_2<\cdots <s_m<t_1<t_2<\cdots<t_n\] and define
$g_{(s)}=\max \{X(s_i),i=1,2,\ldots,m\}$ and $g_{(t)}=\max \{X(t_i),i=1,2,\ldots,n\}$: we claim that
\begin{equation}\label{etub1}
{\mu(g_{(s)}=g_{(t)})}=0.
\end{equation}
In fact, the joint probability distribution of $(g_{(s)},g_{(t)})$ (using the cumulative distribution function) given by
$$
\begin{array}{l}
\mu(g_{(s)}\le \xi, g_{(t)}\le \eta)=\\
\\
\mu(X(s_1)\le \xi,X(s_2)\le \xi,\ldots,X(s_m)\le \xi,X(t_1)\le \eta,X(t_2) \le \eta,\ldots,X(t_n)\le \eta)\\
\\
\ds
= \int_{-\infty}^\xi
\int_{-\infty}^\xi
\cdots
\int_{-\infty}^\xi
\int_{-\infty}^\eta
\int_{-\infty}^\eta
\cdots
\int_{-\infty}^\eta
f(\mathbf x,\mathbf y)\,d\mathbf x \, d\mathbf y,
\end{array}
$$
admits a density with respect to the Lebesgue measure given by
\[
\int_{-\infty}^\xi \int_{-\infty}^\eta\Big[\sum_{i,j} f(\mathbf x_i,\mathbf y_j)\Big] \, d x \,d y
\]
where $$\mathbf x_i=(x_1,x_2,\ldots,x_{i-1},x,x_{i+1},\ldots,x_m)$$ and analogously
$$\mathbf y_j=(y_1,y_2,\ldots,y_{j-1},y,y_{j+1},\ldots,y_n),$$
from which we get \eqref{etub1}.
\\
Now, given two disjoint intervals $I=[a_1,b_1]$ and $J=[a_2,b_2]$ (with $b_1<a_2$) in $[0,T]$,
we get, by the continuity of paths,
\[
\mu(\max_{s\in I}X(s)=\max_{t\in J}X(t))= \mu(\sup_{s\in\mathbb Q\cap I}X(s)=\sup_{t\in\mathbb Q\cap J}X(t))
\]
where $\mathbb Q$ is the set of rational numbers. On the other hand let $\mathbb Q_n$ be an increasing sequence of finite number of rationals
such that $\bigcup \mathbb Q_n=\mathbb Q$; we have
\[
 \mu\left(\sup_{s\in\mathbb Q\cap I}X(s)=\sup_{t\in\mathbb Q\cap J}X(t)\right)\le  \mu\left(\sup_{s\in\mathbb Q_n\cap I}X(s)=\sup_{t\in\mathbb Q_n\cap J}X(t)\right)=0.
\]
Therefore, the proposition is proved.
\end{proof}

 Let us compute the Malliavin derivative of $g$.
  \begin{Proposition}
 \label{p.2901}
 Function  $g$ defined by \eqref{e7} belongs
to $  \bD^{1,2}(H,\mu)$ and it results
\begin{align}
\label{e4.5c}
M g(x) = MX(t)\Big|_{t=\tau_x} \quad\quad \mu\mbox{\rm --a.s. in}\;H. 
\end{align}
\end{Proposition}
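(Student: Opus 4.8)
The plan is to realize $g$ as an $L^2(H,\mu)$--limit of discrete maxima whose Malliavin derivatives can be computed explicitly, and then to pass to the limit using the closedness of $M$. Fix a countable dense set $\{t_k\}_{k\ge1}\subset[0,1]$ (for instance the rationals) and set
\[
g_N(x)=\max_{1\le k\le N}X(t_k)(x),\qquad x\in H .
\]
Since $b,\sigma\in C^2$ are Lipschitz, each $X(t)$ belongs to $\mathbb D^{1,2}(H,\mu)$ and its Malliavin derivative $MX(t)\in H$ solves the associated linearized variation equation; standard moment estimates for that linear equation yield both the a.s. continuity of the $H$--valued map $t\mapsto MX(t)$ and the bound $\E\big[\sup_{t\in[0,1]}|MX(t)|^2\big]<\infty$. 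Together with the SDE estimate $\E\big[\sup_{t\in[0,1]}|X(t)|^2\big]<\infty$, these are the only analytic inputs about the flow that I shall use.

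First I would establish the elementary chain rule for the maximum of two functionals: if $F,G\in\mathbb D^{1,2}(H,\mu)$, then $\max(F,G)=\tfrac12(F+G+|F-G|)\in\mathbb D^{1,2}(H,\mu)$, and, since any $\Phi\in\mathbb D^{1,2}(H,\mu)$ satisfies $M\Phi=0$ $\mu$--a.e. on $\{\Phi=0\}$ (applied to $\Phi=F-G$, so the two candidate derivatives agree $\mu$--a.e. on $\{F=G\}$),
\[
M\max(F,G)=MF\,\mathds 1_{\{F\ge G\}}+MG\,\mathds 1_{\{F<G\}} .
\]
By induction this gives $g_N\in\mathbb D^{1,2}(H,\mu)$ together with the formula $Mg_N=MX(\tau_N)$, where $\tau_N(x)\in\{t_1,\dots,t_N\}$ is the (index-broken) point at which the finite maximum is attained, so that $g_N(x)=X(\tau_N(x))(x)$. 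In particular $|Mg_N|\le\sup_{t\in[0,1]}|MX(t)|$, whence the family $(Mg_N)_N$ is dominated in $L^2(H,\mu;H)$.

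Next I would pass to the limit. By continuity of the paths and density of $\{t_k\}$ we have $g_N\uparrow g$ pointwise $\mu$--a.s., and, being dominated by $\sup_t|X(t)|\in L^2(H,\mu)$, also $g_N\to g$ in $L^2(H,\mu)$. For the derivatives, I claim $\tau_N\to\tau_x$ for $\mu$--a.e.\ $x$: if a subsequence $\tau_{N_j}\to s\neq\tau_x$, then path continuity and $g_{N_j}(x)\to g(x)$ force $X(s)(x)=g(x)=X(\tau_x)(x)$, contradicting the $\mu$--a.s.\ uniqueness of the maximizer granted by Proposition \ref{p3.1}; since $[0,1]$ is compact, the whole sequence converges. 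Combining $\tau_N\to\tau_x$ with the a.s.\ continuity of $t\mapsto MX(t)$ gives $Mg_N=MX(\tau_N)\to MX(t)\big|_{t=\tau_x}$ for $\mu$--a.e.\ $x$, and the uniform domination $|Mg_N|\le\sup_t|MX(t)|\in L^2$ upgrades this to convergence in $L^2(H,\mu;H)$. Since $M$ is closed, from $g_N\to g$ in $L^2(H,\mu)$ and $Mg_N\to MX(t)\big|_{t=\tau_x}$ in $L^2(H,\mu;H)$ we conclude $g\in\mathbb D^{1,2}(H,\mu)$ with $Mg(x)=MX(t)\big|_{t=\tau_x}$, as claimed.

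The main obstacle is the pair of analytic inputs on the flow: the $H$--valued continuity of $t\mapsto MX(t)$ and the moment bound $\E[\sup_t|MX(t)|^2]<\infty$. Both follow by differentiating the stochastic flow and controlling the resulting linear variation equation (Burkholder--Davis--Gundy together with Gronwall), but it is the \emph{uniform-in-$t$} control, rather than merely pointwise-in-$t$, that legitimizes both the domination step and the identification of the a.s.\ limit; without it one would obtain only a weak limit in $L^2(H,\mu;H)$, which would then have to be identified separately before invoking closedness. Everything else --- the two-functional chain rule, the $L^2$ convergence of $g_N$, and the convergence $\tau_N\to\tau_x$ --- is routine once the uniqueness of the maximizer is in hand.
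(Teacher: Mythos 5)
Your proposal is correct and follows essentially the same route as the paper: discretize over a countable dense set $\{t_k\}$, identify $Mg_N=MX(\tau_N)$ via the Lipschitz chain rule for the finite maximum, and pass to the limit using the continuity of trajectories and the $\mu$--a.s.\ uniqueness of the maximizer from Proposition \ref{p3.1}. You merely fill in details the paper delegates to Nualart's general criterion or leaves implicit (the locality of $M$ on $\{F=G\}$, the convergence $\tau_N\to\tau_x$, the uniform domination of $Mg_N$, and the appeal to closedness of $M$).
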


\begin{proof}
The first part follows by the general criterium stated in \cite[Proposition 2.1.10]{Nu06},
which holds for general continuous processes $X$.
\\
According to such result, in order to establish \eqref{e4.5c} we 
fix a countable and dense set $\{t_n,\ n \ge1\}$ in $[0,1]$; notice that 
we can approximate $g$ with a sequence of discrete random variables
$g_n := \max\{X(t_1), \dots, X(t_n)\}$. Let $\phi_n(x_1, \dots, x_n)
= \max\{x_1, \dots, x_n\}$; then $\phi_n$ is a Lipschitz continuous function
with 
\begin{align*}
M \phi_n(X(t_1), \dots, X(t_n)) = M X(\tau_n), 
\end{align*}
where $\tau_n\in \{t_1,t_2,\ldots,t_n\}$ is the time when $X(\tau_n) = \max\{X(t_1), \dots, X(t_n)\}$.
Passing to the limit as $n \to \infty$, due to the continuity of the trajectories, the claim follows.
\end{proof}

\subsection{Applications}

We start  with the maximum of the Brownian motion. Set   
\begin{equation}
\label{e9}
S(t)=\max_{s\in[0,t]}B(s),\quad\forall\;t\in[0,1].
\end{equation}
By Proposition \ref{p3.1} for almost  every $x\in H$ and any $t\in[0,1]$, $S(\cdot)(x)$ attains the maximum on $[0,t]$ at a unique point denoted  $\tau_x^t$.
Moreover,  by Proposition \ref{p.2901} for any $t\in[0,1]$, $S(t)\in \mathbb D^{1,2}(H,\mu)$ and the Malliavin derivative of $S(t)$ is given by
\begin{equation}
\label{e10}
MS(t)(x)={\mathds 1}_{[0,\tau_x^t]}.
\end{equation}

Let us fix $a>0$  and set $I=(a,+\infty)$.
Our aim is to show  the following result.
\begin{Proposition}
\label{p4}
The function
\begin{equation*}
g(x)=S(1)(x),\quad x\in H,
\end{equation*}
fulfills the local Malliavin  condition on $I$  with
\begin{equation}
\label{e11}
u_t(x)=\psi(S(t)(x)),\quad \gamma(x)=\int_0^1\psi(S(t)(x)) \, {\rm d} t,
\end{equation}
where   $\psi:[0,+\infty)\to [0,+\infty)$ is  $C^\infty$ and such that
\begin{equation}
\label{e11bis}
\psi(r)=
\begin{cases}
1\quad&\mbox{\rm if}\;r\in[0,a/2]\\
\\
0\quad&\mbox{\rm if}\;r\ge a.
\end{cases}
\end{equation}
\end{Proposition}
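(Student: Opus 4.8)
The plan is to verify the two requirements of Hypothesis \ref{h1} separately, the algebraic identity \eqref{e1.6} being essentially immediate and the integrability \eqref{e1.7} being the substantial point.

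First I would establish \eqref{e1.6}. By Proposition \ref{p.2901} together with \eqref{e10}, for $\mu$-a.e.\ $x$ one has $Mg(x)=MS(1)(x)=\mathds{1}_{[0,\tau_x]}$, where $\tau_x:=\tau_x^1$ is the a.s.\ unique maximizer of $B$ on $[0,1]$ furnished by Proposition \ref{p3.1}. Hence
\begin{equation*}
\langle Mg(x),u(x)\rangle=\int_0^1\mathds{1}_{[0,\tau_x]}(t)\,\psi(S(t)(x))\,{\rm d}t=\int_0^{\tau_x}\psi(S(t)(x))\,{\rm d}t.
\end{equation*}
The key observation is that for $t\ge\tau_x$ the running maximum has already attained its terminal value, $S(t)(x)=S(1)(x)=g(x)$; when $x\in g^{-1}(I)$ we have $g(x)>a$, so $\psi(S(t)(x))=\psi(g(x))=0$ for every $t\in[\tau_x,1]$. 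Thus the integral over $[\tau_x,1]$ contributes nothing and $\int_0^{\tau_x}\psi(S(t))\,{\rm d}t=\int_0^1\psi(S(t))\,{\rm d}t=\gamma(x)$, which is precisely \eqref{e1.6}.

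Next I would control $1/\gamma$. Since $\psi(0)=1$ and $S(\cdot)(x)$ is continuous with $S(0)(x)=0$, we have $\psi(S(t)(x))=1$ for $t$ below the first-passage time $T:=\inf\{t:B(t)(x)=a/2\}$, giving the pointwise bound $\gamma(x)\ge T\wedge 1>0$ a.s., so that $u/\gamma$ is well defined. The decisive estimate is that $\gamma^{-1}$ has finite moments of every order: using the explicit first-passage density $\tfrac{a/2}{\sqrt{2\pi t^3}}\exp\!\big(-\tfrac{(a/2)^2}{2t}\big)$, whose $\exp(-c/t)$ decay at $t=0$ dominates any power $t^{-p}$, one obtains $\E[T^{-p}]<\infty$ and hence $\E[\gamma^{-p}]<\infty$ for all $p\ge1$. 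I expect this negative-moment bound to be the main obstacle, since every integrability statement downstream rests on it.

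Finally I would show $u/\gamma\in\mathbb{D}^{1,p}(H,\mu)$ for all $p$ and invoke the Meyer inequalities (i.e.\ the inclusion $\mathbb{D}^{1,p}(H)\subset D(M_p^*)$ with the associated $L^p$ bound) to conclude \eqref{e1.7}. By the chain rule $Mu_t=\psi'(S(t))\,\mathds{1}_{[0,\tau_x^t]}$ and $M\gamma=\int_0^1\psi'(S(t))\,\mathds{1}_{[0,\tau_x^t]}\,{\rm d}t$; since $0\le\psi\le1$ and $\psi'$ is bounded with support in $[a/2,a]$, one has $\|u\|_H\le1$ while the $H\otimes H$-norm of $Mu$ and the $H$-norm of $M\gamma$ are uniformly bounded. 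A smooth truncation of $x\mapsto 1/x$, combined with the closedness of $M$ and the moment bound of the previous step, gives $1/\gamma\in\mathbb{D}^{1,p}$ with $M(1/\gamma)=-\gamma^{-2}M\gamma$; the product rule then yields $u/\gamma\in\mathbb{D}^{1,p}(H)$, with
\begin{equation*}
\Big\|M\big(\tfrac{u}{\gamma}\big)\Big\|_{H\otimes H}\le\frac{\|Mu\|_{H\otimes H}}{\gamma}+\frac{\|u\|_H\,\|M\gamma\|_H}{\gamma^2}\le\frac{C}{\gamma}+\frac{C}{\gamma^2},
\end{equation*}
whose $L^p$-norm is finite because $\E[\gamma^{-2p}]<\infty$. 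This establishes $u/\gamma\in D(M_p^*)$ for every $p\ge1$ and completes the verification of Hypothesis \ref{h1}.
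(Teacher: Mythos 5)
Your proof is correct and follows the same overall strategy as the paper's: verify the identity \eqref{e1.6} from $Mg=\mathds{1}_{[0,\tau_x]}$ and the support of $\psi$, bound $\gamma$ from below by the first hitting time of the level $a/2$ to get negative moments of all orders, and then upgrade this to $u/\gamma\in D(M^*)$. Two sub-steps are executed by different (equally standard) means. For the negative moments you integrate the explicit first-passage density $\tfrac{a/2}{\sqrt{2\pi t^3}}e^{-(a/2)^2/2t}$, whose essential singularity at $t=0$ kills every power; the paper instead applies Chebyshev with an arbitrarily high moment of $\sup_{[0,1/\epsilon]}B$, obtaining $\mu(1/\gamma>\epsilon)\le c_q\epsilon^{-q/2}$ and choosing $q>2p$ --- yours is more explicit, the paper's avoids any closed-form density and so transfers more easily to the distorted and geometric cases treated next. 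For the final step you place $u/\gamma$ in $\mathbb{D}^{1,p}(H,\mu)$ and invoke Meyer's inequalities, whereas the paper exploits the adaptedness and boundedness of $u$ (so that $M^*u$ is an It\^o integral) together with the product rule $M^*(\gamma^{-1}u)=\gamma^{-1}M^*u-\langle M(\gamma^{-1}),u\rangle$; your route needs the heavier continuity theorem for the divergence but dispenses with adaptedness, the paper's is more elementary but uses that $u$ is an adapted process. Both arguments are sound at the level of rigor of the paper.
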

\begin{proof}
We have to show that
\begin{equation}
\label{e12}
\int_0^1 M_tg(x)\,\psi(S(t)x)\,{\rm d} t=\gamma(x),\quad \forall\;x\in g^{-1}(I)
\end{equation}
and
\begin{equation}
\label{e13}
\frac{u}{\gamma}\in D(M^*).
\end{equation}

We proceed in several steps.\medskip

\noindent{\it Step 1}. Identity \eqref{e12} is fulfilled.\medskip

Let $x$ be fixed  such that $g(x)>a$ and write, taking into account \eqref{e10}
\begin{equation*}
\int_0^1 M_tg(x)\,\psi(S(t)x) \,{\rm d}t = \int_0^{\tau_x^1} \psi(S(t)x)\,{\rm d} t.
\end{equation*}
Function $t\to S(t)x$ is increasing, so there is
$t_x<\tau_x^1$ such that  $S(t_x)x=a$. Therefore (since $\psi(r)=0$ for $r\ge a$)
\begin{equation*}
\int_0^1 M_tg(x)\,\psi(S(t)x)\,{\rm d} t=\int_0^{t_x} \psi(S(t)x)\,{\rm d} t=\int_0^1\psi(S(t)x)\,{\rm d} t=\gamma(x).
\end{equation*}

\noindent{\it Step 2}. $\gamma\in \mathbb D^{1,p}(H,\mu)$ for all $p\ge 1$.
\\
By the chain  rule and \eqref{e10} we have that $\gamma\in  \mathbb D^{1,p}(H,\mu)$ and
$$
M\gamma(x)=\int_0^1\psi'(S(t)x)\,{\mathds 1}_{[0,\tau_x^t]}\,{\rm d} t.
$$

\medskip

\noindent{\it Step 3}.  $u\in D(M_p^*)$ for all $p\ge 1$.
\\
Actually, $u$ is an adapted process, $u \le 1$, hence $u$ is It\^o integrable and,
{\it a fortiori}, Skorohod integrable.

\medskip


\noindent{\it Step 4}. $\frac1\gamma\in L^p(H,\mu)$ for all $p\ge 1$. 
\\
First notice that (recall that $S(\cdot)x$ is not decreasing)
\begin{multline*}
\gamma(x)= \ds \int_0^1\psi(S(t)x)\,{\rm d} t\ge (S(\cdot)x)^{-1}(\tfrac{a}2)
\\
=\ds (S(.)x)^{-1}(r)=\inf\{ s\ge 0 :   S(s)x\ge r \}.
\end{multline*}
Therefore
\begin{align*}
\frac1{\gamma(x)}\le \frac1{(S(\cdot)x)^{-1}(\tfrac{a}2)}=:Z(x).
\end{align*}
 So, it is enough to show that $Z\in L^p(H,\mu)$, equivalently that
 \begin{equation}
\label{e14}
\int_H Z^p\, {\rm d}\mu=p\int_0^\infty \mu(Z>\epsilon)\,\epsilon ^{p-1}\,{\rm d} \epsilon<\infty.
\end{equation}
Notice now that for any $q>1$ we have
$$
\{Z>\epsilon\}=\left\{\frac{a}2< S(\tfrac1\epsilon)x\right\}\le \left(\frac2a\right)^q\int_H\sup_{s\in [0,1/\epsilon]} B(s)^q   {\rm d} \mu\le c_q\left( \tfrac1\epsilon  \right) ^{q/2}.
$$
So the conclusion follows from the arbitrariness of $q$. 

\medskip

\noindent{\it Step 5}.    $\frac{u}\gamma\in D(M^*)$. 
\\
Let us first notice that $\gamma^{-1}\in D^p(H,\mu)$ since by the chain rule
\begin{align*}
M_p(\gamma^{-1})=-\gamma^{-2}M_p\gamma.
\end{align*}
Therefore $\frac{u}\gamma\in D(M^*)$ and by Step 3 we have
\begin{align*}
M_p^*(\tfrac{u}\gamma)=\gamma^{-1}\,M^*_pu-\langle M_p(\gamma^{-1}),u\rangle.
\end{align*}
The proof is complete.
 \end{proof}

\begin{Remark}
 \em The proof above  was inspired by the paper  from \cite{FlNu95} about the supremum of the Brownian sheet, but it is  more elementary. In particular,  it does not require fractional Sobolev spaces and  the Garsia, Rodemich and 
Rumsey result as in the quoted paper. 
 \end{Remark}
 
 Now we consider a {\em distorted Brownian motion}
\begin{equation}
\label{e15}
B_{b,\sigma}(t):= b\,t+\sigma B(t),\quad\forall\;t\in[0,1],
\end{equation}
where $b\in\R$ and $\sigma>0$ are given. Set
\begin{equation}
\label{e15f}
S_{b,\sigma}(t):=\sup_{s\in[0,t]}(bs+\sigma B(s)),\quad\forall\;t\in[0,1].
\end{equation}

Again, by Proposition \ref{p3.1}  $S_{b,\sigma}(t)$ attains  the maximum  at a unique point $\tau_x^t$ of $[0,t]$ $\mu$--a.s. Moreover, by \eqref{e4.5c} we have 
\begin{equation}
\label{e16}
MS_{b,\sigma}(t)(x)=\sigma{\mathds 1}_{[0,\tau_x^t]}.
\end{equation}
Set $I=(a,+\infty)$ where $a>0$ is fixed.
By proceeding as in the proof of Proposition \ref{p4} we show
\begin{Proposition}
\label{p5}
The function
$$
g(x)=S_{b,\sigma}(1)(x),\quad x\in H,
$$
fulfills Hypothesis \ref{h1} on $I$  with
\begin{equation}
\label{e111}
u_t(x)=\psi(B_{b,\sigma}(t)x),\quad \gamma(x)=\sigma\int_0^1\psi(B_{b,\sigma}(t)x)\,{\rm d} t,
\end{equation}
where   $\psi:[0,+\infty)\to [0,+\infty)$ is  $C^\infty$ and such that \eqref{e11bis} holds.
\end{Proposition}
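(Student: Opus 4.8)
The plan is to verify the two conditions of Hypothesis \ref{h1} for $g=S_{b,\sigma}(1)$ by transcribing, step by step, the five-step argument of Proposition \ref{p4}; the only new features are the factor $\sigma$ coming from $Mg(x)=\sigma\mathds 1_{[0,\tau_x^1]}$ (this is \eqref{e16} with $t=1$) and the fact that the driving path is now the distorted motion $B_{b,\sigma}(t)x=bt+\sigma B(t)x$. Accordingly I would organise the proof into the same five steps, Steps 2--5 being routine estimates and Step 1 carrying the real content.

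I would start with Step 1, the identity \eqref{e1.6}. Pairing $Mg$ with $u$ via \eqref{e16} gives \[\langle Mg(x),u(x)\rangle=\sigma\int_0^{\tau_x^1}\psi(B_{b,\sigma}(t)x)\,{\rm d}t,\] so everything reduces to showing that for $x$ with $g(x)>a$ this equals $\gamma(x)=\sigma\int_0^1\psi(B_{b,\sigma}(t)x)\,{\rm d}t$, i.e.\ that the contribution of $[\tau_x^1,1]$ drops out. In Proposition \ref{p4} this was immediate from the monotonicity of the running maximum together with $\psi\equiv0$ on $[a,\infty)$. Here the cut-off is the same, but the cancellation has to be read off from the running supremum $S_{b,\sigma}(\cdot)x$, which is increasing, climbs to $g(x)>a$ and attains that value at $\tau_x^1$: introducing the first crossing time $t_x=\inf\{s:\,S_{b,\sigma}(s)x\ge a\}<\tau_x^1$ confines the active part of the integral to $[0,t_x]$ and lets one replace the upper limit $\tau_x^1$ by $1$. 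This is the step I expect to be the main obstacle, precisely because the distorted path is not itself monotone, so the vanishing of the tail must be extracted from its supremum rather than from the path.

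Steps 2--4 are the integrability estimates. For Step 2, the chain rule together with $MB_{b,\sigma}(t)(x)=\sigma\mathds 1_{[0,t]}$ gives $\gamma\in\mathbb D^{1,p}(H,\mu)$ for every $p\ge1$, with $M\gamma(x)=\sigma^2\int_0^1\psi'(B_{b,\sigma}(t)x)\,\mathds 1_{[0,t]}\,{\rm d}t$. For Step 3, $u_t(x)=\psi(B_{b,\sigma}(t)x)$ is adapted and bounded by $1$, hence It\^o and a fortiori Skorokhod integrable, so $u\in D(M_p^*)$ for every $p$. For Step 4 one bounds $1/\gamma$: since $B_{b,\sigma}(0)x=0$ and $\psi(0)=1$, the integrand of $\gamma$ equals $1$ while $B_{b,\sigma}(\cdot)x$ has not yet reached level $a/2$, i.e.\ on $[0,t_x')$ with $t_x'=\inf\{s:\,S_{b,\sigma}(s)x\ge a/2\}$, whence $\gamma(x)\ge\sigma\,t_x'$ and $1/\gamma\le1/(\sigma t_x')$; the tail $\mu(t_x'<1/\lambda)=\mu(S_{b,\sigma}(1/\lambda)x\ge a/2)$ is then controlled by the moments of the supremum exactly as in \eqref{e14}, giving $1/\gamma\in L^p(H,\mu)$ for all $p$.

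Finally Step 5 establishes \eqref{e13}: from $\gamma^{-1}\in\mathbb D^{1,p}(H,\mu)$ with $M(\gamma^{-1})=-\gamma^{-2}M\gamma$, together with Steps 3 and 4, the product rule for the Skorokhod integral yields $u/\gamma\in D(M^*)$ and $M_p^*(\tfrac{u}{\gamma})=\gamma^{-1}M_p^*u-\langle M_p(\gamma^{-1}),u\rangle$. Apart from the cancellation isolated in Step 1, the whole argument is a transcription of Proposition \ref{p4} with the constant $\sigma$ inserted in the appropriate places.
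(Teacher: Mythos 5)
Your Steps 2--5 are faithful transcriptions of Proposition \ref{p4} and would be fine, but Step 1 --- which you yourself single out as the crux --- contains a genuine gap: the cancellation you assert there is false for the integrand you are using. With $u_t(x)=\psi(B_{b,\sigma}(t)x)$ as in the printed statement, identity \eqref{e1.6} would require $\int_{\tau_x^1}^1 \psi(B_{b,\sigma}(t)x)\,{\rm d}t=0$ for a.e.\ $x$ with $g(x)>a$. Knowing that the running supremum stays above $a$ after the crossing time $t_x$ tells you nothing about $\psi(B_{b,\sigma}(t)x)$, because $\psi$ is evaluated at the path, which is not monotone: on the event (of positive probability for every $b\in\R$, $\sigma>0$) that the path climbs above $a$ early and then falls back below $a/2$ before time $1$, one has $\tau_x^1<1$ and $\psi(B_{b,\sigma}(t)x)=1$ on a subset of $(\tau_x^1,1]$ of positive Lebesgue measure, so $\langle Mg(x),u(x)\rangle<\gamma(x)$. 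No choice of crossing time can ``confine the active part of the integral'': the tail simply does not vanish, and the vanishing cannot be ``extracted from the supremum'' of an integrand that does not involve the supremum. (A further symptom: $\psi$ is defined only on $[0,+\infty)$, while $B_{b,\sigma}(t)x$ takes negative values; the running supremum $S_{b,\sigma}(t)x\ge B_{b,\sigma}(0)x=0$ does not.)

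The resolution --- and what the paper means by ``proceeding as in the proof of Proposition \ref{p4}'' --- is that $\psi$ must be evaluated at the running supremum: $u_t(x)=\psi(S_{b,\sigma}(t)x)$ and $\gamma(x)=\sigma\int_0^1\psi(S_{b,\sigma}(t)x)\,{\rm d}t$. Formula \eqref{e111} as printed is evidently a misprint ($B_{b,\sigma}$ for $S_{b,\sigma}$), as comparison with \eqref{e11} and \eqref{e20} shows: in both the Brownian and the geometric cases the argument of $\psi$ is the running maximum. With this correction your Step 1 becomes exactly the monotonicity argument of Proposition \ref{p4}: $S_{b,\sigma}(\cdot)x$ is nondecreasing and continuous, so when $g(x)>a$ there is $t_x<\tau_x^1$ with $S_{b,\sigma}(t_x)x=a$, and $\psi(S_{b,\sigma}(t)x)=0$ for $t\ge t_x$ gives $\sigma\int_0^{\tau_x^1}\psi(S_{b,\sigma}(t)x)\,{\rm d}t=\sigma\int_0^{t_x}\psi(S_{b,\sigma}(t)x)\,{\rm d}t=\sigma\int_0^1\psi(S_{b,\sigma}(t)x)\,{\rm d}t=\gamma(x)$. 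Your Steps 2--5 then go through verbatim: $u$ is still adapted and bounded by $1$; in Step 2 the chain rule yields $M\gamma(x)=\sigma^2\int_0^1\psi'(S_{b,\sigma}(t)x)\,{\mathds 1}_{[0,\tau_x^t]}\,{\rm d}t$ (note the indicator up to $\tau_x^t$, not $t$); and in Step 4 the bound $\gamma(x)\ge\sigma t_x'$, with $t_x'$ the hitting time of $a/2$ by $S_{b,\sigma}(\cdot)x$, is now clean since $\psi(S_{b,\sigma}(t)x)=1$ for $t<t_x'$ without extending $\psi$ to negative arguments, and the moment estimate for the supremum absorbs the drift via $\sup_{s\le\delta}(bs+\sigma B(s))\le|b|\delta+\sigma\sup_{s\le\delta}B(s)$.
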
\medskip

We finally consider the {\em Geometric Brownian motion}  $$
 X(t):=e^{(b-\frac12\,\sigma^2)t+\sigma \,B(t)},\quad\forall\;t\in[0,1],
 $$
 where $b\in\R$  and $\sigma>0$. $X(t),\,t\in[0,1],$ is the strong solution of the following SDE:
 $$
 {\rm d}X=bX \,  {\rm d}t+\sigma X \,  {\rm d}B(t),\quad X(0)=1.
 $$
 Set
$$
S_X(t)=\max_{s\in[0,t]}X(s),\quad\forall\;t\in[0,1].
$$
 Arguing as before we see  that  $\mu$--a.s.  $X(s),\,s\in[0,t],$ attains  the maximum  at a unique point $\tau_x^t$ of $[0,t]$ and that
 $$
 MS_X(t)(x)=\sigma\, S_X(t){\mathds 1}_{[0,\tau_x^t]},\quad\forall\;t\in[0,1].
 $$

Now let us fix $a>0$  and set $I=(a,+\infty)$.
By proceeding as in the proof of Proposition \ref{p4} we show
\begin{Proposition}
\label{p6}
The function
$$
g(x)=  S_X(1)(x),\quad x\in H,
$$
fulfills  Hypothesis \ref{h1}  with
\begin{equation}
\label{e20}
u_t(x)=\psi(S_X(t)(x)),\quad \gamma(x)=\sigma\,S_X(1)(x)\int_0^1\psi(S_X(t)(x))\,{\rm d} t,
\end{equation}
where   $\psi:[0,+\infty)\to [0,+\infty)$ is  $C^\infty$ and such that  \eqref{e11bis} is fulfilled.
\end{Proposition}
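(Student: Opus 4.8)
The plan is to run the same five-step scheme as in the proof of Proposition \ref{p4}, the only structural novelty being the multiplicative factor $\sigma\,S_X(t)$ appearing in the Malliavin derivative. By Proposition \ref{p.2901} and the computation preceding the statement we have $MS_X(t)(x)=\sigma\,S_X(t){\mathds 1}_{[0,\tau_x^t]}$, so in particular $Mg(x)=MS_X(1)(x)=\sigma\,S_X(1){\mathds 1}_{[0,\tau_x^1]}$. A preliminary remark, specific to the geometric case, is that $X(0)=1$ forces $S_X(t)(x)\ge 1$ for every $t$; the relevant range is therefore $a>1$ (for $a\le 1$ the surfaces $\{g=r\}$, $r<1$, are empty since $S_X(1)\ge 1$). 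The bound $S_X(1)\ge 1$ makes the factor $\sigma\,S_X(1)$ in $\gamma$ bounded below by $\sigma$, which will reduce the integrability question for $1/\gamma$ to one for the time integral alone.

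For Step 1 (identity \eqref{e1.6}) I would fix $x$ with $g(x)=S_X(1)(x)>a$ and compute
\begin{equation*}
\langle Mg(x),u(x)\rangle=\int_0^1 \sigma\,S_X(1){\mathds 1}_{[0,\tau_x^1]}(t)\,\psi(S_X(t)x)\,{\rm d}t=\sigma\,S_X(1)\int_0^{\tau_x^1}\psi(S_X(t)x)\,{\rm d}t .
\end{equation*}
Since $t\mapsto S_X(t)x$ is nondecreasing and continuous with $S_X(0)x=1<a<S_X(1)x$, there is $t_x<\tau_x^1$ with $S_X(t_x)x=a$, and $\psi(S_X(t)x)=0$ for $t\ge t_x$; the integral then extends to all of $[0,1]$ and equals $\gamma(x)$ as in \eqref{e20}, exactly as in Proposition \ref{p4}. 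For Step 2 ($\gamma\in\mathbb D^{1,p}$ for all $p$) I would write $\gamma=\sigma\,S_X(1)\,\Gamma$ with $\Gamma(x)=\int_0^1\psi(S_X(t)x)\,{\rm d}t$; the chain rule gives $M\Gamma(x)=\sigma\int_0^1\psi'(S_X(t)x)\,S_X(t){\mathds 1}_{[0,\tau_x^t]}\,{\rm d}t$, which is bounded because $\psi'$ is supported in $[a/2,a]$ and there $S_X(t)\le a$, so $\Gamma\in\mathbb D^{1,p}$. Since $S_X(1)\in\mathbb D^{1,p}$ for all $p$ (the supremum of geometric Brownian motion has moments of every order and $|MS_X(1)|\le\sigma\,S_X(1)$), the product rule yields $\gamma\in\mathbb D^{1,p}$. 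Step 3 ($u\in D(M_p^*)$) is unchanged: $u_t(x)=\psi(S_X(t)x)$ is adapted and bounded, hence It\^o—and a fortiori Skorokhod—integrable.

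The main obstacle is Step 4, namely $1/\gamma\in L^p(H,\mu)$ for all $p$; by the preliminary remark it suffices to prove $1/\Gamma\in L^p$. Here I would use that $\log X$ is the distorted Brownian motion $(b-\tfrac12\sigma^2)t+\sigma B(t)$, so $S_X(t)=\exp\big(\sup_{s\le t}[(b-\tfrac12\sigma^2)s+\sigma B(s)]\big)$ starts at $1$ and increases. Choosing a level $\ell\in(1,a)$ and setting $\tau_\ell(x)=\inf\{t:\,S_X(t)x\ge\ell\}$, on $[0,\tau_\ell(x))$ one has $S_X(t)x\in[1,\ell)\subset[0,a)$, whence $\psi(S_X(t)x)\ge c_\ell:=\min_{[1,\ell]}\psi>0$ (taking the cutoff positive on $[0,a)$; for $a\ge 2$ one may take $\ell=a/2$, where $c_\ell=1$, recovering verbatim the bound of Proposition \ref{p4}). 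Thus $\Gamma(x)\ge c_\ell\,(\tau_\ell(x)\wedge 1)$ and $1/\Gamma\le c_\ell^{-1}(\tau_\ell^{-1}\vee 1)$, so it remains to estimate $\mu(\tau_\ell<1/\epsilon)=\mu\big(\sup_{s\le 1/\epsilon}[(b-\tfrac12\sigma^2)s+\sigma B(s)]\ge\log\ell\big)$ for large $\epsilon$. Bounding the drift by $|b-\tfrac12\sigma^2|/\epsilon$ (negligible against $\log\ell>0$ for $\epsilon$ large) and applying, for arbitrary $q$, Markov's inequality together with the scaling moment estimate for $\sup_{s\le T}B(s)$ as in Step 4 of Proposition \ref{p4}, one obtains $\mu(\tau_\ell^{-1}>\epsilon)\le c_q\,\epsilon^{-q/2}$; the arbitrariness of $q$ then gives $\tau_\ell^{-1}\in L^p$ for every $p$. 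This passage-time moment bound, together with the verification that the drift does not spoil it, is the only genuinely new computation.

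Finally, Step 5 (the conclusion \eqref{e1.7}) is formal. From Steps 2 and 4 one gets $\gamma^{-1}\in\mathbb D^{1,p}$ with $M(\gamma^{-1})=-\gamma^{-2}M\gamma$, using $\gamma^{-1}\in L^{2p}$ and $M\gamma$ bounded through H\"older's inequality; combining with Step 3 via the Leibniz-type rule $M^*(\gamma^{-1}u)=\gamma^{-1}M^*u-\langle M(\gamma^{-1}),u\rangle$ yields $\tfrac{u}{\gamma}\in D(M^*)$, which is \eqref{e1.7}. I expect no difficulty here beyond bookkeeping.
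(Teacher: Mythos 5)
Your proof is correct and follows exactly the route the paper intends: the paper's own ``proof'' of Proposition \ref{p6} is literally the instruction to proceed as in Proposition \ref{p4}, and you carry out that adaptation while correctly identifying the two points where a verbatim repetition would break down, namely the implicit constraint $a>1$ forced by $S_X(t)\ge X(0)=1$ (for $a\le 1$ one would have $\psi(S_X(t))\equiv 0$ and hence $\gamma\equiv 0$), and the replacement of the level $a/2$ by a level $\ell\in(1,a)$ with $\min_{[1,\ell]}\psi>0$ in the lower bound for $\gamma$ in Step 4. The only slip is the parenthetical claim in Step 5 that $M\gamma$ is bounded --- it is not, because of the unbounded factors $S_X(1)$ appearing in $M\gamma=\sigma\Gamma\,MS_X(1)+\sigma S_X(1)M\Gamma$ --- but since your Step 2 shows $M\gamma$ lies in every $L^q(H,\mu)$, H\"older's inequality still yields $\gamma^{-2}M\gamma\in L^p$ and the argument is unaffected.
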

\begin{Remark}
\label{r3.7f}
\em
Let us consider  the {\em Brownian Bridge},
   $$
 B_0(t)=B(t)-tB(1),\quad t\in [0,1].
  $$
  Also  $B_0(t),\,t\in[0,1],$ attains  the maximum  at a unique point $\tau_x^t$ of $[0,t]$, $\mu$--a.s..   Set
\begin{equation}
\label{e23}
S_0(t)=\max_{s\in[0,t]}(X(t)),\quad\forall\;t\in[0,1],
\end{equation}
so that
\begin{equation}
\label{e24}
MS_0(t)(x)=B(\tau_x)-\tau_x,
\end{equation}
In this case we are not able to show that the function
$$
g(x)=\max_{t\in [0,1]}S_0(t)(x),
$$
fulfills Hypothesis \ref{h1}.
One realizes in fact   that,   due to the additional term $-\tau_x$ in equation \eqref{e24},  the proof of Proposition \ref{p4} does not work in this case.\medskip

A similar difficulty arises with  the  {\em Ornstein--Uhlenbeck} process
$$
 X(t)=\int_0^t e^{-(t-s)a}\,{\rm d} B(s),\quad t\in[0,1],
  $$
  where $a>0$ is fixed. 
  The  Malliavin derivative of $X(t)$ is given by
\begin{equation}
\label{e22a}
M_\tau X(t)x=e^{-a(t-\tau)}{\mathds 1}_{[0,t]}(\tau).
\end{equation}
 Setting
\begin{equation}
\label{e21}
S(1)=\max_{t\in[0,1]}(X(t)), 
\end{equation}
we have by \eqref{e4.5c}
\begin{equation}
\label{e22}
MS(1)(x)=e^{-a(1-\tau_x)}{\mathds 1}_{[0,\tau_x]},
\end{equation}
where $\tau_x$ is the the time when $X(t)(x)$ reaches the maximum value  as $t\in [0,1]$.

As before we  cannot  repeat the proof of Proposition \ref{p4} due to the exponential term in equation \eqref{e22}.
\end{Remark}

 \section{Integration by parts formulae in $L^2_+(0,1)$}
 
 \subsection{Setting of the problem}
 We are given   a   Gaussian process  $X(t),\,t\in[0,1],$ in $H=L^2(0,1)$. Its law is a Gaussian measure  which we denote by $\mu=N_Q$. We assume that $\mu$ is non degenerate. The following integration by parts formula  is well known
\begin{equation}
\label{e4.1f}
\int_H \langle D\varphi(x),z   \rangle\def\xx{_H}\,\mu({\rm d} x)=\int_H \varphi(x)\,\langle Q^{-1/2}x, Q^{-1/2}z  \rangle\def\xx{_H}\,\mu({\rm d} x),
\end{equation}
for all $\varphi\in C^1_b(H)$ and all $z\in Q^{1/2}(H)$.  
We shall also assume that $\mu$ is concentrated in $E:=C([0,1])$ and that the Cameron--Martin space $Q^{1/2}(H)$ is included in $E$.
Then we can find easily an integration by parts formula on $E$. For this we need the following elementary lemma, see e.g., \cite[Lemma 2.1]{CeDa14}.

\begin{Lemma}
\label{l4.1f}
For any $\varphi\in C^1_b(E)$ there exists a sequence $(\varphi_n)\in C^1_b(H)$ such that
\begin{itemize}
\item[(i)] $\ds\lim_{n\to \infty}\varphi_n(x)\to \varphi(x),\quad \forall\;x\in E$.
\item[(ii)] $\ds 
\lim_{n\to \infty}\langle D\varphi_n(x),z\rangle_H=D\varphi(x)\cdot z,$ for all $x,z\in E$.
\end{itemize}
\end{Lemma}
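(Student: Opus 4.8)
The plan is to realize each approximant as a composition $\varphi_n := \varphi \circ T_n$, where $T_n \colon H \to E$ is a family of bounded linear smoothing operators converging to the identity on $E$. Concretely, I would fix a standard mollifier $\rho_n \in C^\infty(\R)$ with $\rho_n \ge 0$, $\int_\R \rho_n = 1$ and $\mathrm{supp}\,\rho_n \subset [-1/n, 1/n]$, extend $x \in L^2(0,1)$ to $\tilde x$ on a neighborhood of $[0,1]$ by even reflection across the endpoints $0$ and $1$ (a linear map, bounded on every bounded interval, that sends continuous functions to continuous functions), and set $T_n x := (\tilde x * \rho_n)\big|_{[0,1]}$.

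First I would check that $T_n$ maps $H = L^2(0,1)$ boundedly into $E = C([0,1])$. The convolution of an $L^2$ function with a continuous compactly supported kernel is continuous, so $T_n x \in E$, and Cauchy--Schwarz gives $|(\tilde x * \rho_n)(t)| \le \|\rho_n\|_{L^2}\,\|\tilde x\|_{L^2([t-1/n,\,t+1/n])} \le C_n \|x\|_H$, whence $T_n$ is bounded for each fixed $n$. Note that $\|T_n\|_{H\to E}$ must blow up as $n \to \infty$, since a genuine gain of regularity from $L^2$ to $C$ cannot be uniformly bounded; but the statement asks only for pointwise convergence, not for uniform control of the $C^1_b(H)$ norms, so this is harmless.

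Next, the chain rule shows $\varphi_n \in C^1_b(H)$. Since $\varphi \in C^1_b(E)$ and $T_n$ is bounded linear, $\varphi_n$ is Fr\'echet differentiable with $D\varphi_n(x)\cdot h = D\varphi(T_n x)\cdot(T_n h)$ for $h \in H$; the functional $h \mapsto D\varphi(T_n x)(T_n h)$ is bounded on $H$, hence represented by $D\varphi_n(x) = T_n^{*} D\varphi(T_n x) \in H$ with $\|D\varphi_n(x)\|_H \le \|T_n\|\,\|D\varphi\|_\infty$, while boundedness of $\varphi_n$ and continuity of $x \mapsto D\varphi_n(x)$ are immediate. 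Assertion (i) then follows from the classical fact that for continuous $x$ the reflected extension $\tilde x$ is uniformly continuous on compacts, so $T_n x \to x$ uniformly on $[0,1]$; continuity of $\varphi$ yields $\varphi_n(x) \to \varphi(x)$.

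Finally, for (ii) I would use $\langle D\varphi_n(x), z\rangle_H = D\varphi(T_n x)\cdot(T_n z)$ together with $T_n x \to x$ and $T_n z \to z$ in $E$, splitting
\[
|D\varphi(T_n x)(T_n z) - D\varphi(x)(z)| \le \|D\varphi(T_n x)\|_{E^*}\,\|T_n z - z\|_E + \|D\varphi(T_n x) - D\varphi(x)\|_{E^*}\,\|z\|_E,
\]
both terms tending to $0$ by the uniform bound on $D\varphi$ and the continuity of $D\varphi \colon E \to E^{*}$. The one delicate point in the whole argument is the construction of $T_n$ near the boundary of $[0,1]$: it must simultaneously gain regularity ($L^2 \to C$), converge uniformly to the identity on continuous functions, and keep the endpoints under control, and the even-reflection extension is the clean way to secure all three at once.
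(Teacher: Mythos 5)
Your construction is correct, and in fact the paper gives no proof of this lemma at all: it simply defers to \cite[Lemma 2.1]{CeDa14}, so your argument fills in exactly the kind of regularizing-operator construction that reference relies on. All the steps check out: $T_n$ is bounded linear from $H=L^2(0,1)$ into $E=C([0,1])$ by Cauchy--Schwarz, so $\varphi_n=\varphi\circ T_n\in C^1_b(H)$ with $D\varphi_n(x)=T_n^*D\varphi(T_nx)$ by the chain rule; the even reflection makes $T_nx\to x$ uniformly for continuous $x$, which gives (i); and the splitting you use for (ii) is the right one, since $T_nz\to z$ in $E$ and $D\varphi\colon E\to E^*$ is continuous and bounded. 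You are also right that the blow-up of $\|T_n\|_{H\to E}$ is harmless for the statement as given. One point worth making explicit, since the lemma is only useful insofar as it feeds Proposition \ref{p4.2f} (where one passes to the limit under $\int\cdot\,{\rm d}\mu$): your construction automatically supplies the domination needed there, namely $|\varphi_n|\le\|\varphi\|_{\infty}$ and $|\langle D\varphi_n(x),z\rangle_H|=|D\varphi(T_nx)\cdot(T_nz)|\le\|D\varphi\|_\infty\sup_n\|T_nz\|_E<\infty$ for fixed $z\in E$, because mollification does not increase the sup norm. Stating these two uniform bounds as part of the lemma's conclusion would make your proof not just correct but strictly more useful than the bare statement.
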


Now we can prove

\begin{Proposition}
\label{p4.2f}
For all $\varphi\in C^1_b(E)$ and any $z\in Q^{1/2}(H)$ the following integration by parts formula holds
\begin{equation}
\label{e4.2f}
\int_E D\varphi(x)\!\cdot \!z    \,\mu({\rm d} x)=\int_E \varphi(x)\,\langle Q^{-1/2}x, Q^{-1/2}z  \rangle\def\xx{_H}\,\mu({\rm d} x).
\end{equation}
\end{Proposition}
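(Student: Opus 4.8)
The plan is to establish \eqref{e4.2f} by approximating $\varphi\in C^1_b(E)$ with functions in $C^1_b(H)$, applying the already known formula \eqref{e4.1f} to each approximant, and passing to the limit, using crucially that $\mu$ is concentrated on $E$. First I would fix $\varphi\in C^1_b(E)$ and $z\in Q^{1/2}(H)$, and let $(\varphi_n)\subset C^1_b(H)$ be the sequence provided by Lemma \ref{l4.1f}. Since each $\varphi_n$ lies in $C^1_b(H)$, formula \eqref{e4.1f} gives
\begin{equation*}
\int_H \langle D\varphi_n(x),z\rangle_H \,\mu({\rm d} x)=\int_H \varphi_n(x)\,\langle Q^{-1/2}x, Q^{-1/2}z  \rangle\,\mu({\rm d} x).
\end{equation*}
Because $\mu(E)=1$, each integral over $H$ coincides with the corresponding integral over $E$, so it remains to let $n\to\infty$ on both sides, exploiting the pointwise convergences from Lemma \ref{l4.1f}: the left integrand tends to $D\varphi(x)\cdot z$ by (ii), while $\varphi_n(x)\to\varphi(x)$ by (i) on the right.

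The substantive point is to justify interchanging the limit and the integral. Writing $z=Q^{1/2}w$ with $w\in H$, the factor $\langle Q^{-1/2}x, Q^{-1/2}z\rangle=\langle Q^{-1/2}x,w\rangle$ is the Paley--Wiener functional associated to $w$, hence a centered Gaussian random variable lying in $L^2(H,\mu)$ and in particular $\mu$-integrable. The construction underlying Lemma \ref{l4.1f} (see \cite{CeDa14}) supplies the uniform bounds $\|\varphi_n\|_\infty\le \|\varphi\|_\infty$ and $\sup_n\sup_x |\langle D\varphi_n(x),z\rangle_H|<\infty$. With these in hand, on the right-hand side the integrands are dominated by $\|\varphi\|_\infty\,|\langle Q^{-1/2}x,w\rangle|\in L^1(H,\mu)$, and on the left-hand side they are bounded uniformly by a constant; dominated convergence then applies to both integrals and yields \eqref{e4.2f}.

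The step I expect to be the real obstacle is securing those uniform bounds. Lemma \ref{l4.1f} as stated delivers only pointwise convergence, so the passage to the limit rests on quantitative features of the approximating sequence — uniform sup-norm control and uniform control of the directional derivatives $\langle D\varphi_n(\cdot),z\rangle_H$ — which have to be read off from the explicit construction rather than from the bare statement of the lemma. Once one verifies that the approximants inherit the bounds $\|\varphi\|_\infty$ and $\mathrm{Lip}(\varphi)|z|$ of the limiting function (as is standard for the cylindrical/mollified approximations used to prove Lemma \ref{l4.1f}), the remainder of the argument is the routine dominated-convergence passage sketched above.
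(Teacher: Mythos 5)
Your proof follows exactly the paper's route: take the approximating sequence $(\varphi_n)\subset C^1_b(H)$ from Lemma \ref{l4.1f}, apply \eqref{e4.1f} to each $\varphi_n$, and let $n\to\infty$. The only difference is that you spell out the dominated-convergence justification (uniform bounds on $\varphi_n$ and on $\langle D\varphi_n(\cdot),z\rangle_H$, integrability of the Gaussian factor $\langle Q^{-1/2}x,w\rangle$), which the paper compresses into ``the conclusion follows letting $n\to\infty$''; this is a correct and welcome refinement, not a different argument.
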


\begin{proof}
Let $\varphi_n\in C^1_b(H)$ be a sequence  
as in Lemma \ref{l4.1f}. Then by \eqref{e4.1f} we have
\begin{equation}
\label{e4.3f}
\int_H \langle D\varphi_n(x),z\rangle_H  \,\mu({\rm d} x)
= \int_H \varphi_n(x)\,\langle Q^{-1/2}x, Q^{-1/2}z  \rangle\def\xx{_H}\,\mu({\rm d} x).
\end{equation}
The conclusion follows letting $n\to\infty.$
\end{proof}

\begin{Corollary}
For all $\varphi,\psi\in C^1_b(E)$ and any $z\in Q^{1/2}(H)$ the following integration by parts formula holds
\begin{equation}
\label{e4.2g}
\int_E D\varphi\!\cdot \!z \,\psi \, {\rm d} \mu
= - \int_E D\psi\!\cdot \!z \,\varphi \, {\rm d} \mu +
\int_E \varphi\,\psi\,\langle Q^{-1/2}x, Q^{-1/2}z  \rangle \, {\rm d} \mu.
\end{equation}
\end{Corollary}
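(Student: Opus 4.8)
The plan is to reduce the two-function identity \eqref{e4.2g} to the single-function integration by parts formula of Proposition \ref{p4.2f} by applying the latter to the product $\varphi\psi$ and then expanding the directional derivative via the Leibniz rule. First I would check that $\varphi\psi$ is an admissible test function, i.e. that $\varphi\psi\in C^1_b(E)$ whenever $\varphi,\psi\in C^1_b(E)$: the product of two bounded functions is bounded, and since
\begin{equation*}
D(\varphi\psi)\cdot z=(D\varphi\cdot z)\,\psi+\varphi\,(D\psi\cdot z),
\end{equation*}
with all four factors bounded and continuous, the Gâteaux derivative of $\varphi\psi$ in every direction $z\in E$ exists, is continuous, and is bounded. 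Hence $\varphi\psi$ satisfies the hypotheses of Proposition \ref{p4.2f}.

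Next I would apply \eqref{e4.2f} with $\varphi$ replaced by $\varphi\psi$, obtaining
\begin{equation*}
\int_E D(\varphi\psi)\cdot z\;\mu({\rm d}x)=\int_E \varphi\,\psi\,\langle Q^{-1/2}x, Q^{-1/2}z\rangle\,\mu({\rm d}x).
\end{equation*}
Substituting the Leibniz expansion of $D(\varphi\psi)\cdot z$ into the left-hand side and splitting the integral—legitimate because each summand is bounded and $\mu$ is a finite measure—yields
\begin{equation*}
\int_E (D\varphi\cdot z)\,\psi\,{\rm d}\mu+\int_E \varphi\,(D\psi\cdot z)\,{\rm d}\mu=\int_E \varphi\,\psi\,\langle Q^{-1/2}x, Q^{-1/2}z\rangle\,{\rm d}\mu.
\end{equation*}
Transposing the second integral on the left to the right-hand side gives precisely \eqref{e4.2g}.

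I do not expect a genuine obstacle in this argument, since the statement is a direct consequence of Proposition \ref{p4.2f}. The only points requiring (routine) verification are the two used above: that $\varphi\psi$ inherits the $C^1_b(E)$ regularity needed to invoke the proposition, and that the Leibniz rule holds for the directional derivative $D(\cdot)\cdot z$ on the Banach space $E=C([0,1])$. Both are standard facts about Fréchet differentiable functions, so the proof is short once the admissibility of $\varphi\psi$ is noted.
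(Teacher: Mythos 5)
Your proof is correct and is exactly the argument the paper intends: the Corollary is stated without proof, as an immediate consequence of Proposition \ref{p4.2f} applied to the product $\varphi\psi$ together with the Leibniz rule, which is precisely your reduction. No gaps.
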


\begin{Remark}
\label{r4.3f}
\em
By \eqref{e4.2f} it follows, by standard arguments, that the gradient operator $D$ is closable in $L^p(E,\mu)$ for any $p\ge 1$, we shall still denote by $D$ its closure. As a consequence  identity  \eqref{e4.2f} also holds for $\varphi$ belonging to the domain of the closure of $D$.
\end{Remark}

The main object of this section is the following function in $E$
\begin{equation}
\label{e4.4f}
g(x):=\min_{t\in [0,1]} X(t)(x),\quad x\in E.
\end{equation}
We shall assume that
\begin{Hypothesis}
\label{h2}
$\null$\par\noindent
(i)  The law of $g$ is absolutely continuous with respect to  the Lebesgue measure $\lambda$ on $(-\infty,0]$; we shall denote  $\rho$  the corresponding density.

\medskip

\noindent(ii) For  $\mu$--almost all  $x\in E$,  trajectories of $X(t)$ attain their minimum
$g(x)$   at   a unique point   $\tau_x\in [0,1]$. Moreover,   there exists the directional derivative of $g(x)$ in all directions $z\in E$ and its result
  \begin{equation}
\label{e4.4l}
Dg(x)\!\cdot \!z:=z(\tau_x).
\end{equation}
\end{Hypothesis}
 \begin{Proposition}
\label{p6.1}
Assume  Hypothesis \ref{h2}.
Then for all $\varphi\in C^1_b(E)$ and all $z\in Q^{1/2}(H)$ the following identity holds 
 for every $r < 0$:
\begin{multline}
\label{e6.2}
\E\big[z(\tau)\varphi\big | g\!=\!r\big] \, \rho(r)
-\int_{\{g\ge r\}} \left( D \varphi\! \cdot \!z - \langle Q^{-1/2}x,Q^{-1/2}z\rangle \, \varphi \right) \, {\rm d} \mu.
\end{multline}
\end{Proposition}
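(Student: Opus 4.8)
The plan is to reduce \eqref{e6.2} to the Gaussian integration by parts formula \eqref{e4.2g} on $E$ applied to a smooth function of $g$, and then to disintegrate the identity along $g$ using the density $\rho$ furnished by Hypothesis \ref{h2}(i). First I would fix $\varphi\in C^1_b(E)$ and $z\in Q^{1/2}(H)$ and record that, since $z\in Q^{1/2}(H)\subset E=C([0,1])$ and $\tau_x\in[0,1]$, the quantity $z(\tau_x)=Dg(x)\!\cdot\!z$ is well defined $\mu$--a.s.\ and bounded by $\|z\|_\infty$. The central device is to test \eqref{e4.2g} against $\psi=\Theta(g)$, where $\Theta:\R\to\R$ is smooth and bounded with bounded derivative. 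Before doing so I must place $\Theta(g)$ in the domain of the closure of $D$ (Remark \ref{r4.3f}) and identify its directional derivative. I would do this by the finite-dimensional approximation underlying Proposition \ref{p.2901}: writing $g_n=\min\{X(t_1),\dots,X(t_n)\}$ along a dense set of times, the functions $\Theta(g_n)$ are regular, $\Theta(g_n)\to\Theta(g)$, and by Hypothesis \ref{h2}(ii) together with continuity of paths $D(\Theta(g_n))\!\cdot\!z=\Theta'(g_n)\,(Dg_n\!\cdot\!z)\to\Theta'(g)\,z(\tau_x)$; closability then gives $\Theta(g)\in\mathrm{dom}(\overline D)$ with $D(\Theta(g))\!\cdot\!z=\Theta'(g)\,z(\tau_x)$.

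With this chain rule in hand, the product formula \eqref{e4.2g}, extended to the closure, applied with $\psi=\Theta(g)$ yields the master identity
\[
\int_E \Theta(g)\big(D\varphi\!\cdot\!z-\langle Q^{-1/2}x,Q^{-1/2}z\rangle\,\varphi\big)\,{\rm d}\mu=-\int_E \Theta'(g)\,z(\tau_x)\,\varphi\,{\rm d}\mu.
\]
Next I would disintegrate $\mu$ along $g$. Because the law of $g$ is absolutely continuous with density $\rho$ on $(-\infty,0]$ (Hypothesis \ref{h2}(i)) and the factors $z(\tau_x)\varphi$ and $D\varphi\!\cdot\!z-\langle Q^{-1/2}x,Q^{-1/2}z\rangle\varphi$ are $\mu$--integrable, both image measures $(z(\tau)\varphi\,\mu)\circ g^{-1}$ and $\big((D\varphi\!\cdot\!z-\langle Q^{-1/2}x,Q^{-1/2}z\rangle\varphi)\,\mu\big)\circ g^{-1}$ are absolutely continuous on $(-\infty,0)$; denoting their $L^1$--densities by $B$ and $A$, so that $B(s)=\E[z(\tau)\varphi\mid g=s]\,\rho(s)$, the master identity becomes $\int_{-\infty}^0\Theta(s)A(s)\,{\rm d}s=-\int_{-\infty}^0\Theta'(s)B(s)\,{\rm d}s$.

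Finally I would specialize $\Theta=\theta_\epsilon$, a smooth nondecreasing approximation of $\mathds 1_{[r,\infty)}$ whose derivative $\theta_\epsilon'$ is an approximate identity concentrating at $r$. Since $\mu(g=r)=0$ (again Hypothesis \ref{h2}(i)) and $A\in L^1$, dominated convergence gives $\int_{-\infty}^0\theta_\epsilon(s)A(s)\,{\rm d}s\to\int_r^0 A(s)\,{\rm d}s=\int_{\{g\ge r\}}(D\varphi\!\cdot\!z-\langle Q^{-1/2}x,Q^{-1/2}z\rangle\varphi)\,{\rm d}\mu$, a quantity continuous in $r$. On the right, the Lebesgue differentiation theorem yields $-\int_{-\infty}^0\theta_\epsilon'(s)B(s)\,{\rm d}s\to-B(r)$ for a.e.\ $r<0$. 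Equating the two limits and using the continuity in $r$ of the volume integral to fix a continuous version of $B$, I upgrade the identity to every $r<0$ and obtain $\E[z(\tau)\varphi\mid g=r]\,\rho(r)=-\int_{\{g\ge r\}}(D\varphi\!\cdot\!z-\langle Q^{-1/2}x,Q^{-1/2}z\rangle\varphi)\,{\rm d}\mu$, which is \eqref{e6.2}.

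The hard part will be the first step: because $g$ is only directionally differentiable, I must justify rigorously that $\Theta(g)$ lies in the domain of the closed gradient and that the derivative passes through the non-smooth $\min$ exactly as $\Theta'(g)z(\tau_x)$; this is where Hypothesis \ref{h2}(ii) and the argmin approximation of Proposition \ref{p.2901} are indispensable, and where one must ensure that the approximating minima $g_n$ select the correct limiting minimizer $\mu$--a.s. A secondary difficulty is the delta-type passage to the limit on the right-hand side, which holds a priori only at Lebesgue points of $B$; the claim ``for every $r<0$'' then rests on the continuity of $r\mapsto\int_{\{g\ge r\}}(\cdots)\,{\rm d}\mu$ and, equivalently, on identifying the left side of \eqref{e6.2} with the genuine surface integral $\int_{\{g=r\}}z(\tau_x)\varphi\,{\rm d}\sigma_r$ of \eqref{e1.0}--\eqref{e1.00}. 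The boundary behaviour as $r\uparrow 0$ is deliberately left aside and is the content of the separate analysis of the limit case $r=0$.
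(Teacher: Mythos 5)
Your proposal is correct and follows essentially the same route as the paper: test the Gaussian integration-by-parts formula \eqref{e4.2g} with $\psi=\theta_\epsilon(g)$, use the chain rule and $Dg\cdot z=z(\tau_x)$ from Hypothesis \ref{h2}(ii), condition on $g$ to bring in the density $\rho$, and pass to the limit $\epsilon\to 0$ via Lebesgue differentiation a.e.\ in $r$, upgrading to all $r<0$ by continuity of the volume integral. The only difference is cosmetic: you take $\Theta$ smooth and justify $\Theta(g)\in\mathrm{dom}(\overline D)$ by the finite-dimensional argmin approximation, whereas the paper uses a piecewise-linear $\theta_\epsilon$ and invokes the closability of $D$ (Remark \ref{r4.3f}) more tersely.
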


\begin{proof}
We fix  $r<0$ and $\epsilon>0$ such that $ r+\epsilon<0$. Then  we apply \eqref{e4.2g} setting $\psi=\theta_\epsilon(g) $, where $\theta_\epsilon$ is given by 
\begin{align*}
\theta_\eps(\xi)=
\begin{cases} 
0\quad & \text{ if } \quad \xi<r-\epsilon
\\
\frac{\xi-r+\epsilon}{2\epsilon}
\quad & \text{ if } \quad \xi\in [r-\epsilon,r+\epsilon]
\\
1\quad & \text{ if } \quad \xi> r+\epsilon.
\end{cases}
\end{align*}
By the chain rule we have
\begin{align*}
D(\theta_\epsilon(g))=\theta'_\epsilon(g)Dg=\frac1{2\epsilon}\,{\mathds 1}_{\{r-\epsilon\le g\le r+\epsilon\}}\,Dg,
\end{align*}
and by \eqref{e4.4l} we deduce
\begin{multline}
\label{e6.3}
\frac1{2\eps}\int_E \mathds 1_{\{r-\epsilon\le g\le r+\epsilon\}}  z(\tau)  \,\varphi \, {\rm d} \mu
\\
= - \int_{\{g\ge r-\epsilon\}} \theta_\epsilon(g) \left(  D \varphi \cdot z - \langle Q^{-1/2}x,Q^{-1/2}z\rangle \, \varphi \right) \, {\rm d} \mu.
\end{multline}
On the one hand,
\begin{align*}
\frac1{2\epsilon}\int_E {\mathds 1}_{\{r-\epsilon\le g\le r+\epsilon\}}z(\tau)\, \varphi\, {\rm d} \mu
=\frac1{2\epsilon}\int_E {\mathds 1}_{\{r-\epsilon\le g\le r+\epsilon\}}\E[z(\tau)\varphi|g]\, {\rm d} \mu.
\end{align*}
Since the law of $g$ is  given by the measure $\rho(r) \,  {\rm d} r$ we have
\begin{equation}
\label{e6.4}
 \frac1{2\eps}\int_{\{r-\epsilon\le g\le r+\epsilon\}} z(\tau)\,\varphi \, {\rm d} \mu
= \frac1{2\eps}\int_{r-\epsilon}^{r+\epsilon} \E\big[z(\tau)\,\varphi\big | g\!=\! \xi \big] \, \rho(\xi) \, {\rm d} \xi.
\end{equation}
Now   letting $\epsilon\to 0$,
we have that for almost every $r < 0$
\begin{align*}
\lim_{\eps \to 0} \frac1{2\eps}\int_{\{r-\epsilon\le g\le r+\epsilon\}} z(\tau)\,\varphi \, {\rm d} \mu
= \E\big[z(\tau)\,\varphi\big | g\!=\! r \big] \, \rho(r).
\end{align*}
We next consider the right hand side of \eqref{e6.3}, and we prove that it
converges to  \footnote{if $\psi(x) = D\varphi(x) \!\cdot \!z - \varphi(x) \langle Q^{-1/2}x,Q^{-1/2}z \rangle$ then $\E|\psi|^2 \le \|\varphi\|^2_{C^1_b} \left( |z|^2 + |Q^{-1/2}z|^2\right)$, and since the right hand side of \eqref{e6.3} is
dominated by $\psi$ and $\theta_\eps(g(x)) \to \mathds 1_{[r,+\infty)}(g(x))$, we can apply the dominated convergence theorem}
\begin{align*}
-\int_{\{g\ge r\}} \left( D \varphi \cdot z - \langle Q^{-1/2}x,Q^{-1/2}z\rangle \, \varphi \right)\,d\mu
\end{align*}
and since this is a continuous function in $r$  \footnote{with the notation of formula \eqref{e1.1},
we shall prove that $F_\psi(r)$ is a continuous function; notice that $F_\psi(r+\eps) - F_\psi(r) = \int \theta_\eps'(g(x)) \psi(x) \, \mu(dx)$ and, again, we can apply the dominated convergence theorem by noticing that
$\P(g(x) \in [r,r+\eps)) = \int_r^{r+\eps} \rho(t) \, {\rm d} t \to 0$ due to the absolute continuity of the law of $g$
}, formula \eqref{e6.2} follows for every $r < 0$.
\end{proof}

 \begin{Remark}
 \label{r6.2}\em
 Assume, besides the assumptions of Proposition \ref{p6.1} that $\varphi\langle Mg,z\rangle\in \mathbb D^{1,2}(H,\mu)$ or $\varphi\langle Mg,z\rangle\in UC_b(H)$.   Then if $r<0$, identity \eqref{e6.2} can be written as
 \begin{equation}
\label{e6.2a}
  \int_{\{g= r\}}\langle Mg,z\rangle\,\varphi\,  {\rm d}\sigma_r =-\int_{\{g\ge r\}} \langle M\varphi,z\rangle\,  {\rm d}\mu+\int_{\{g\ge r\}}   W_z\, \varphi\,  {\rm d}\mu,
\end{equation}
where $\sigma_r$ is the surface measure  introduced before.
\end{Remark}

\medskip
The following result will be useful later to pass to the limit for $r\to 0$.
\begin{Proposition}\label{p4.7T}
Assume, besides Hypothesis \ref{h2}, that the joint law of the random vector $(g,\tau)$ can be written as
$\pi({\rm d} y, {\rm d} s) = \pi(y,s) \, {\rm d} y {\rm d} s$ on $\bR \times [0,1]$, with the map $y\to \pi(y,s)$ continuous.
Then for all $\varphi\in C^1_b(E)$ and all $z\in Q^{1/2}(H)$ the following identity holds $\forall\;r<0$
\begin{multline}
\label{e6.star}
\int_0^1 \E\big[\varphi| g\!=\!r, \tau\!=\!s\big] \, z(s) \,\pi(r,s)\,{\rm d} s
\\
= - \int_{\{g\ge r\}}[D\varphi\cdot z- \langle Q^{-1/2}x, Q^{-1/2}z  \rangle\, \varphi]\,{\rm d} \mu,\quad \forall\;r<0.
\end{multline}
\end{Proposition}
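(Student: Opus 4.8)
The plan is to exploit the fact that the right-hand side of \eqref{e6.star} coincides \emph{verbatim} with the right-hand side of \eqref{e6.2} in Proposition \ref{p6.1}. Granting that proposition, it therefore suffices to prove the measure-theoretic identity
\begin{equation*}
\E\big[z(\tau)\varphi\big|g=r\big]\,\rho(r)=\int_0^1 \E\big[\varphi\big|g=r,\tau=s\big]\,z(s)\,\pi(r,s)\,{\rm d} s,\qquad r<0,
\end{equation*}
which expresses that disintegrating the finite measure $z(\tau)\varphi\,\mu$ along $g$ agrees with disintegrating it jointly along $(g,\tau)$ and then integrating out the variable $\tau$. Note that the very assumption that $(g,\tau)$ admits a joint density $\pi$ is what allows one to make sense of $\E[\varphi|g=r,\tau=s]$ as a genuine function.

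To prove this identity I would test both members against an arbitrary $h\in C_b(\bR)$ with support in $(-\infty,0)$. On the left, disintegration of $\mu$ along $g$, whose marginal density is $\rho$, gives $\int_{\bR}\E[z(\tau)\varphi|g=r]\,\rho(r)\,h(r)\,{\rm d} r=\int_E z(\tau)\,\varphi\,h(g)\,{\rm d}\mu$ by the tower property. On the right, I would use that the joint law of $(g,\tau)$ has density $\pi(r,s)\,{\rm d} r\,{\rm d} s$ and that on $\{\tau=s\}$ one has $z(\tau)=z(s)$; Fubini and the tower property then yield
\begin{equation*}
\int_{\bR}\!\int_0^1 \E\big[\varphi\big|g=r,\tau=s\big]\,z(s)\,\pi(r,s)\,h(r)\,{\rm d} s\,{\rm d} r=\int_E z(\tau)\,\varphi\,h(g)\,{\rm d}\mu.
\end{equation*}
Since the two expressions agree for every admissible $h$, the corresponding densities coincide for almost every $r<0$.

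It remains to upgrade this almost-everywhere equality to every $r<0$, which is the only delicate point. By Proposition \ref{p6.1} and the continuity argument in its proof, the common right-hand side $-\int_{\{g\ge r\}}(D\varphi\cdot z-\langle Q^{-1/2}x,Q^{-1/2}z\rangle\,\varphi)\,{\rm d}\mu$ is continuous in $r$; hence the left-hand side of \eqref{e6.star}, being a.e. equal to it, possesses a continuous version. The role of the hypothesis that $y\mapsto\pi(y,s)$ is continuous is precisely to guarantee that $r\mapsto\int_0^1\E[\varphi|g=r,\tau=s]\,z(s)\,\pi(r,s)\,{\rm d} s$ is itself continuous (using also the boundedness of $\varphi$ and $z$), so that this disintegrated density is the genuine continuous representative rather than merely an a.e.\ equivalence class. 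Two continuous functions agreeing almost everywhere are equal everywhere, and \eqref{e6.star} follows for all $r<0$.
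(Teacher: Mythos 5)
Your argument is correct and rests on the same key idea as the paper's own proof: disintegrating the measure $z(\tau)\varphi\,\mu$ jointly over $(g,\tau)$ and using the continuity of $y\mapsto\pi(y,s)$ to evaluate the resulting density at $y=r$. The paper performs this conditioning inside the $\epsilon$-regularized identity \eqref{e6.3} and then lets $\epsilon\to 0$, whereas you start from the conclusion \eqref{e6.2} of Proposition \ref{p6.1} and upgrade an a.e.\ identity to a pointwise one by continuity; both routes tacitly require continuity in $y$ of $\E\big[\varphi\,|\,g=y,\tau=s\big]\,\pi(y,s)$ (and not merely of $\pi(y,s)$ itself), a point the paper leaves equally implicit.
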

 
\begin{proof}
Let us consider the left hand side of \eqref{e6.3}: by 
conditioning with respect to $\{g=y,\tau=s\}$ we get
\[
\frac1{2\eps}\int_{\{r-\epsilon\le g\le r+\epsilon\}}  z(\tau) \,\varphi\,  {\rm d}\mu =
\frac{1}{2\eps} \int_{r-\eps}^{r+\eps} \int_0^1 z(s) \,  \E\big[\varphi| g\!=\!y, \tau\!=\!s\big] \, \pi({\rm d} y, {\rm d} s)
\]
Passing to the limit for $\eps \to 0$ and taking into account 
that $\pi({\rm d} y, {\rm d} s) =\pi(y,s) \, {\rm d} y \, {\rm d} s$, we get
the thesis.
\end{proof}

\subsection{Examples}

\subsubsection{Brownian motion}\label{s4.2.1}

Let $X(t)=B(t),\,t\in[0,1]$. Then the law $\mu$ of $X$ is  the Wiener measure, it is  is concentrated on   $\{x\in C([0,1]):\,x(0)=0,\;x'(1)=0\} \subset E$.
Moreover,  the law of $g$ (defined by \eqref{e4.4f})
is given by,
see e.g. \cite[1.2.4  page 154]{BoSa02},  \begin{equation}
\label{e6.7d}
(\mu\circ g^{-1})({\rm d} r)=\frac2{\sqrt {2\pi}}e^{-\frac12r ^2}\,{\mathds 1}_{(-\infty,0]} \, {\rm d} r
\end{equation}
 As well known, for almost all  $x\in E$  $B(\cdot) x$ has a unique minimum point at $\tau_x$ so that Hypothesis \ref{h2} is fulfilled  see e.g. \cite{EnSt93} .

Consequently, applying the   integration by parts formula   \eqref{e6.2}, we obtain   that for all   $\varphi\in C^1_b(H)$, $z\in Q^{1/2}(H)$ and all $r<0$ we have
\begin{multline}
\label{e6.9d}
\frac2{\sqrt {2\pi}}e^{-\frac12r^2} \E\left[ z(\tau)\,\varphi|g\!=\!r\right]  
\\
\ds=-
\int_{\{g\ge r\}} (D\varphi(x)\!\cdot \!z-  \varphi(x) \,\langle Q^{-1/2}x,Q^{-1/2}z\rangle)\, \mu({\rm d} x).
\end{multline}

We want now to   extend identity  \eqref{e6.9d} up to $r=0$. To this purpose we    recall two facts. The first one is a result  from
\cite[page 118, Thm 2.1]{DuIgMi77}; namely defining the probability measure $\nu_r$, $r<0,$ as
\begin{equation}
\label{e4.16f}
\int_H \varphi\, {\rm d} \nu_r = \tfrac1{\mu(\{g\ge r\})}\int_{\{g\ge r\}}\varphi\, {\rm d} \mu,\quad \varphi\in C_b(H)
\end{equation}
then $\nu_r$ converges weakly to $\nu$, where $\nu$ is the law of the Brownian meander. 

The second one is the expression for the joint density  $\pi(y,s)$ of $g=B(\tau)$ and $\tau$, which is given by, see \cite[1.14.4 page 172]{BoSa02}
\begin{equation}
\label{e4.17f}
\pi(y,s)=\tfrac{|y|}{\sqrt{\pi^2\,s^3\,(1-s)}}\;e^{-\frac{y^2}{2s}},\quad y\le 0,\,s\in[0,1].
\end{equation}
Now we can prove

\begin{Proposition}
\label{p4.9h}
For all $\varphi\in C^1_b(E)$ and all $z\in Q^{-1/2}(H)$ we have
  \begin{multline}
\label{e4.20f}
\tfrac1{\sqrt{2\pi}} \int_0^1\E\left[\varphi|g\!=\!0,\tau\!=\!s\right] 
  \tfrac{z(s)}{\sqrt{\,s^3\,(1-s)}} \, {\rm d} s
\\
=-
\int_{\{g=0\}} [D \varphi(x)\!\cdot \!z -  \varphi (x)\langle Q^{-1/2}x,Q^{-1/2}z\rangle]\, \nu({\rm d} x),
\end{multline}
where $\nu$ is the law of the Brownian meander.
\end{Proposition}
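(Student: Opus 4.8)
The plan is to deduce \eqref{e4.20f} from the finer identity \eqref{e6.star} of Proposition \ref{p4.7T} by passing to the limit $r\to 0^-$ after a suitable normalization. First I would insert the explicit joint density \eqref{e4.17f} into the left-hand side of \eqref{e6.star}; since $r<0$ we have $|r|=-r$, so
\[
\int_0^1 \E\big[\varphi|g\!=\!r,\tau\!=\!s\big]\,z(s)\,\pi(r,s)\,{\rm d}s
=\frac{-r}{\pi}\int_0^1 \E\big[\varphi|g\!=\!r,\tau\!=\!s\big]\,\frac{z(s)}{\sqrt{s^3(1-s)}}\,e^{-r^2/(2s)}\,{\rm d}s.
\]
On the right-hand side I would introduce the normalized measures $\nu_r$ of \eqref{e4.16f}, writing, with $W_z(x):=\langle Q^{-1/2}x,Q^{-1/2}z\rangle$,
\[
-\int_{\{g\ge r\}}\big(D\varphi\!\cdot\!z - W_z\,\varphi\big)\,{\rm d}\mu
=-\,\mu(\{g\ge r\})\int_H \big(D\varphi\!\cdot\!z - W_z\,\varphi\big)\,{\rm d}\nu_r.
\]

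Next I would divide both expressions by $\mu(\{g\ge r\})$ and let $r\to 0^-$. By \eqref{e6.7d} one has $\mu(\{g\ge r\})=\int_r^0 \tfrac{2}{\sqrt{2\pi}}e^{-s^2/2}\,{\rm d}s$, hence $\mu(\{g\ge r\})/(-r)\to \tfrac{2}{\sqrt{2\pi}}$; together with the prefactor $1/\pi$ and $e^{-r^2/(2s)}\to 1$, the constants collapse to $(1/\pi)\big/(2/\sqrt{2\pi})=1/\sqrt{2\pi}$, precisely the constant appearing in \eqref{e4.20f}. Granting continuity of $r\mapsto \E[\varphi|g\!=\!r,\tau\!=\!s]$ at $r=0$ (which follows from the assumed continuity of $y\mapsto\pi(y,s)$ and dominated convergence, the only delicate point being the integrability of the $s^{-3/2}/\sqrt{1-s}$ factor, controlled through the boundary decay of $z$ and of $s\mapsto\E[\varphi|g\!=\!0,\tau\!=\!s]$ near $s=0,1$), the left-hand side tends to $\tfrac{1}{\sqrt{2\pi}}\int_0^1\E[\varphi|g\!=\!0,\tau\!=\!s]\,z(s)/\sqrt{s^3(1-s)}\,{\rm d}s$. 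For the right-hand side I would invoke the result recalled before \eqref{e4.16f}, namely $\nu_r\to\nu$ weakly, $\nu$ being the law of the Brownian meander, which is concentrated on $\{g=0\}$, so that $\int_H f\,{\rm d}\nu_r\to\int_{\{g=0\}}f\,{\rm d}\nu$ for bounded continuous $f$.

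The main obstacle is exactly this last passage on the right-hand side. Weak convergence $\nu_r\to\nu$ controls only $\int f\,{\rm d}\nu_r$ for bounded continuous $f$: the term $D\varphi\!\cdot\!z$ is harmless (for $\varphi\in C^1_b(E)$ it is bounded and continuous in $x$), but $W_z\,\varphi$ is \emph{not} bounded, since $W_z=\langle Q^{-1/2}x,Q^{-1/2}z\rangle$ is an unbounded Paley--Wiener functional. To close the gap I would establish a uniform integrability bound, for instance $\sup_{r<0}\E^{\nu_r}[\,|W_z|^2\,]<\infty$, by showing that $\tfrac{1}{\mu(\{g\ge r\})}\int_{\{g\ge r\}}W_z^2\,{\rm d}\mu$ stays bounded as $r\to 0^-$ (its limit being the finite second moment of $W_z$ under the meander). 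Uniform integrability, combined with the weak convergence through a Skorokhod representation so that $W_z\varphi$ converges along almost every realization, then upgrades the convergence to $\int_H W_z\varphi\,{\rm d}\nu_r\to\int_{\{g=0\}}W_z\varphi\,{\rm d}\nu$. Equating the two limiting sides yields \eqref{e4.20f}.
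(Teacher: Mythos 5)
Your proof follows the paper's argument exactly: substitute the explicit joint density \eqref{e4.17f} into \eqref{e6.star}, divide both sides by $\mu(\{g\ge r\})$, use $\mu(\{g\ge r\})/|r|\to 2/\sqrt{2\pi}$ from \eqref{e6.7d} to produce the constant $1/\sqrt{2\pi}$, and invoke the weak convergence $\nu_r\to\nu$ of Durrett--Iglehart--Miller to pass to the limit on the right-hand side. The two difficulties you flag --- the unboundedness of $\langle Q^{-1/2}x,Q^{-1/2}z\rangle$, which bare weak convergence does not control, and the integrability of $z(s)/\sqrt{s^3(1-s)}$ near $s=0$ --- are genuine but are passed over in silence by the paper's one-line ``letting $r$ tend to zero'' step, so your treatment is, if anything, more careful than the original.
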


\begin{proof}
By \eqref{e6.star}  we have, taking into account
\eqref{e4.17f},
\begin{multline}
\label{e4.18f}
- \int_{\{g\ge r\}}[D\varphi\!\cdot\! z- \langle Q^{-1/2}x, Q^{-1/2}z  \rangle\, \varphi]\, {\rm d} \mu
 \\
 =
  \int_0^1\E\left[\varphi|g\!=\!r,\tau\!=\!s\right] 
  \tfrac{|r|\,z(s)}{\sqrt{\pi^2\,s^3\,(1-s)}}\;e^{-\frac{r^2}{2s}}\, {\rm d} s.
  \end{multline}
 On the other hand, by \eqref{e6.7d} we have
\begin{align*}
 \mu(g\ge r)= \tfrac2{\sqrt {2\pi}}\int^0_{r}e^{-\frac12s ^2} \, {\rm d} s
\end{align*}
it follows that
\begin{align*}
\lim_{r\to 0}\,\tfrac{ \mu(g\ge r)}{|r|}=\frac2{\sqrt{2\pi}}.
\end{align*}
Therefore, dividing both sides of \eqref{e4.18f} by $\mu(g\ge r)$ and letting $r$ tend to zero 
we find identity  \eqref{e4.20f}. 
 \end{proof}

 \begin{Remark}
 \label{r1z}
 \em Identity \eqref{e4.20f} was proved by a different method by S. Bonaccorsi and L. Zambotti, \cite{BoZa04}.
 \end{Remark}
 
\subsubsection{Distorted Brownian motion}
Now we consider a {\em distorted Brownian motion}
\begin{equation}
\label{e4.15}
X(t):= b\,t+\sigma B(t),\quad\forall\;t\in[0,1],
\end{equation}
where $b>0$ and $\sigma>0$ are given. Set as usual
\begin{equation}
\label{e4.15f}
g:=\inf_{s\in[0,1]}(b \, s+\sigma B(s)).
\end{equation}

Then the law $\mu$ of $X$ is  the Wiener measure.
Moreover,  the law of $g$ (defined by \eqref{e4.15f}) has a density with respect to the Lebesgue measure given by,
see e.g. \cite[1.2.4  page 251]{BoSa02} \footnote{
\begin{align*}
\mathbb P_0(\inf_{0 \le t \le 1} \mu s + \sigma W_s \le y) = \mathbb P_0(\inf_{0 \le t \le \sigma^2} \frac\mu{\sigma^2} t + W_t \le y) 
\\
= \frac12 \mathop{\rm Erfc}\left(-\frac{y-\mu}{\sigma \sqrt{2}} \right) + \frac12 e^{2 \mu y/\sigma^2}
\mathop{\rm Erfc}\left(-\frac{y+\mu}{\sigma \sqrt{2}} \right) 
\end{align*}}
\begin{equation}
\label{e4.20T}
\rho_{b,\sigma}(r) = \tfrac{\sqrt{2}}{\sigma \sqrt{\pi}} \exp\left(-\tfrac{(r-b)^2}{2 \sigma^2}\right) + \tfrac{b}{\sigma^2} e^{2 \frac{b}{\sigma^2} r} \mathop{\rm Erfc}\left(- \tfrac{r+b}{\sigma \sqrt{2}} \right)
%
%
\,{\mathds 1}_{(-\infty,0]}(r).
\end{equation}
By a direct computation, we have
\begin{align*}
\mu(g \ge r) \approx C_{b,\sigma} r + o(r), \qquad r \to 0,
\end{align*}
where $C_{b,\sigma}$ can be explicitly computed.\footnote{
$\ds C_{b,\sigma} = \frac{b}{\sigma^2} \left( \mathop{\rm Erfc}\left(\tfrac{b}{\sigma \sqrt{2}}\right) - 2 \right) - \tfrac{\sqrt{2}}{\sqrt{\pi \sigma^2}} e^{-b^2/2\sigma^2}$}
\\
For almost all  $x\in E$,  $X(\cdot) x$ has a unique minimum point at $\tau_x$ 
so that Hypothesis \ref{h2} is fulfilled (see e.g. Proposition \ref{p3.1}).

Consequently, applying the   integration by parts formula   \eqref{e6.2}, we obtain   that for all   $\varphi\in C^1_b(H)$, $z\in Q^{1/2}(H)$ and all $r<0$ we have
\begin{multline}
 \label{e4.21T} 
\rho_{b,\sigma}(r) \, \E\left[ z(\tau)\,\varphi|g=r\right]  
\\
=-
\int_{\{g\ge r\}} (D\varphi(x)\cdot z-  \varphi(x) \,\langle Q^{-1/2}x,Q^{-1/2}z\rangle)\, \mu({\rm d} x).
\end{multline}
Now, to apply Proposition \ref{p4.7T} we need the expression of the joint density of $g$ and $\tau$ which is given by,
see e.g. \cite[1.13.4  page 268]{BoSa02}
\begin{equation}
\label{e4.22z}
\pi(r,s)=\tfrac{|r|}{\sqrt{\pi\sigma^2}\,s^{3/2}}e^{-\frac{(|r|+b\, s)^2}{2\sigma^2s}}
\left(\tfrac{e^{- \frac{b^2}{2\sigma^2}(1-s)}}{\sqrt{\pi\sigma^2(1-s)}}+\tfrac{b}{\sqrt{2}\sigma^2}\mbox{Erfc}\,\left(-\tfrac{b\,\sqrt{1-s}}{\sqrt{2\sigma^2}}\right)\right)
\end{equation}

Now we can prove
\begin{Proposition}
\label{p4.9hh}
For all $\varphi\in C^1_b(E)$ and all $z\in Q^{1/2}(H)$ we have
  \begin{multline}
\label{e4.20fff}
  \int_0^1\E\left[\varphi|g\!=\!0,\tau\!=\!s\right] \tilde\pi(s)\, {\rm d} s
\\
=-\tfrac{1}{\int_{\{g=0\}}e^{-b\,x(1)}\, \nu({\rm d} x)}
\int_{\{g=0\}}e^{-b\,x(1)} [D \varphi(x)\cdot z -  \varphi (x)\langle Q^{-1/2}x,Q^{-1/2}z\rangle]\, \nu({\rm d} x),
\end{multline}
where  $\tilde\pi$ is a given function, that is defined in \eqref{eq:2707-1} below, and $\nu$ is the law of the Brownian meander.
\end{Proposition}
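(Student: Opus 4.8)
The plan is to mirror the Brownian-motion argument of Proposition~\ref{p4.9h}, replacing the explicit densities by their distorted counterparts and identifying the limiting conditioned measure by a Cameron--Martin (Girsanov) reweighting. First I would start from the pre-limit formula \eqref{e4.21T}, which is valid for every $r<0$, and rewrite its left-hand side by means of Proposition~\ref{p4.7T} together with the explicit joint density \eqref{e4.22z}. This produces the exact analogue of \eqref{e4.18f}, namely
$$-\int_{\{g\ge r\}}[D\varphi\cdot z-\langle Q^{-1/2}x,Q^{-1/2}z\rangle\,\varphi]\,{\rm d}\mu=\int_0^1\E[\varphi|g\!=\!r,\tau\!=\!s]\,z(s)\,\pi(r,s)\,{\rm d}s.$$
As in the Brownian case, the next move is to divide both sides by $\mu(g\ge r)$ and let $r\to 0^-$, using the linear expansion $\mu(g\ge r)=C_{b,\sigma}\,r+o(r)$ recorded above, so that $\mu(g\ge r)/|r|\to -C_{b,\sigma}>0$.

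For the left-hand limit I would factor $|r|$ out of \eqref{e4.22z}. Since $e^{-\frac{(|r|+bs)^2}{2\sigma^2 s}}\to e^{-\frac{b^2 s}{2\sigma^2}}$, which combines with the $e^{-\frac{b^2}{2\sigma^2}(1-s)}$ term to give the $s$-independent factor $e^{-\frac{b^2}{2\sigma^2}}$, the ratio $\pi(r,s)/|r|$ converges pointwise to an explicit function of $s$; dividing by $-C_{b,\sigma}$ defines the function $\tilde\pi(s)$ of \eqref{eq:2707-1}. To pass the limit under the integral sign I would supply a dominating bound uniform for small $|r|$, the delicate region being $s\to 0$, where $\tilde\pi(s)\sim s^{-3/2}$: this apparent non-integrability is absorbed by the boundary behaviour of $z\in Q^{1/2}(H)$, whose Cameron--Martin representatives vanish at the endpoint, so that $z(s)\,\tilde\pi(s)$ is integrable (the $(1-s)^{-1/2}$ singularity at $s=1$ being integrable outright). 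Dominated convergence then yields $\int_0^1\E[\varphi|g\!=\!0,\tau\!=\!s]\,z(s)\,\tilde\pi(s)\,{\rm d}s$, the left-hand side of \eqref{e4.20fff}.

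The main work, and the step I expect to be the obstacle, is the right-hand limit: identifying the weak limit of the conditioned probabilities $\nu_r(\cdot)=\mu(\{g\ge r\})^{-1}\int_{\{g\ge r\}}(\cdot)\,{\rm d}\mu$. Here I would exploit the absolute continuity between the distorted law $\mu$ and the driftless reference law $\mu_0$ (the law of $\sigma B$): by Cameron--Martin, $\frac{{\rm d}\mu}{{\rm d}\mu_0}(x)$ is proportional to an exponential of the endpoint value $x(1)$, and carrying out this computation produces the weight $e^{-bx(1)}$ of \eqref{e4.20fff}. Writing $\nu_r(\varphi)=\nu_r^0\big(\varphi\,\tfrac{{\rm d}\mu}{{\rm d}\mu_0}\big)\big/\nu_r^0\big(\tfrac{{\rm d}\mu}{{\rm d}\mu_0}\big)$, where $\nu_r^0$ is the conditioned driftless measure, and invoking the result of \cite{DuIgMi77} already used in Proposition~\ref{p4.9h} that $\nu_r^0$ converges weakly to the Brownian meander $\nu$, I obtain that $\nu_r$ converges to the meander reweighted by that exponential and normalised, which is exactly the measure on the right of \eqref{e4.20fff}.

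The hard part is that the reweighting density is unbounded, so bare weak convergence does not suffice: I must check uniform integrability of $\frac{{\rm d}\mu}{{\rm d}\mu_0}$ under the family $\{\nu_r^0\}$ — equivalently, a bound on the exponential moment of $x(1)$ under the conditioned measures that is uniform in $r$ near $0$ — in order to pass to the limit for the unbounded test functionals involving $\langle Q^{-1/2}x,Q^{-1/2}z\rangle$. Finally I would record the compatibility of the two normalisations, namely that $-C_{b,\sigma}$ equals $\int_{\{g=0\}}\tfrac{{\rm d}\mu}{{\rm d}\mu_0}\,{\rm d}\nu$ times the driftless normalisation constant, so that the constant built into $\tilde\pi$ matches the factor $\big(\int_{\{g=0\}}e^{-bx(1)}\,\nu({\rm d}x)\big)^{-1}$ in \eqref{e4.20fff}; combining the two limits then gives the asserted identity for every $\varphi\in C^1_b(E)$ and $z\in Q^{1/2}(H)$.
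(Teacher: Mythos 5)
Your proposal follows essentially the same route as the paper: starting from the $r<0$ identity of Proposition \ref{p4.7T} with the explicit joint density \eqref{e4.22z}, dividing by $\mu(g\ge r)$, obtaining $\tilde\pi$ from the ratio of the two asymptotically linear quantities, and identifying the limit of the conditioned measures on the right-hand side via the Girsanov reweighting $e^{-b\,x(1)}$ of the Brownian-meander limit from \cite{DuIgMi77}. The only difference is that you explicitly flag the dominated-convergence and uniform-integrability issues (the $s^{-3/2}$ singularity of $\tilde\pi$ at $s=0$ and the unboundedness of the Radon--Nikodym weight), which the paper passes over in silence; these are genuine points of care but do not change the argument.
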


\begin{proof}
 First we notice that 
for all $\varphi\in C^1_b(E)$ and all $z\in Q^{-1/2}(H)$ we have
\begin{multline}
\label{e4.23jj1}
 \int_0^1\E\left[ \varphi | g\!=\!r,\tau\!=\!s\right]
z(s)\,\pi(r,s)\,{\rm d} s\\
\\
=
-\int_{\{g\ge r\}}  [ D \varphi(x)\!\cdot \!z-  \varphi(x)\,\langle Q^{-1/2}x,Q^{-1/2}z\rangle ]\, \mu({\rm d} x).
\end{multline}
Now we have to divide both sides of \eqref{e4.23jj1} by $\mu(g\ge r)$
and pass to the limit for $r\uparrow 0$. 
\\
By using the explicit formulas available for $\pi(r,s)$ and $\mu(g \ge r)$, which show that these are (asymptotically) linear functions for $r$ around 0, we have
\begin{align}
\nonumber
\lim_{r \uparrow 0} \frac{\pi(r,s)}{\mu(g \ge r)} &= \tfrac{1}{C_{b,\sigma} \, \sqrt{\pi\sigma^2}\,s^{3/2}} 
\left(\tfrac{e^{- \frac{b^2}{2\sigma^2}}}{\sqrt{\pi\sigma^2(1-s)}}+\tfrac{b}{\sqrt{2}\sigma^2} e^{-\frac{b^2}{2\sigma^2}s} \, \mbox{Erfc}\,\left(-\tfrac{b\,\sqrt{1-s}}{\sqrt{2\sigma^2}}\right)\right)
\\
\label{eq:2707-1}
&= \tilde \pi(s).
\end{align}
or the right hand side we will use Girsanov's theorem. 
Let us consider the
process $Y(t) = \frac{1}{\sigma}X(t) = \frac{b}{\sigma}t + B(t)$, and the probability measure
\begin{align*}
\gamma({\rm d} x)=\rho(x) \mu({\rm d} x) = e^{\frac{b}{\sigma} B(1) - \frac12 \frac{b^2}{\sigma^2}} \, \mu({\rm d} x).
\end{align*}
Then $Y$ is a Brownian motion on  $(H,\mathcal B(H),\gamma)$.
On the other hand
\begin{align*}
\mu({\rm d} x)=\rho^{-1}(x)\,\gamma({\rm d} x) = e^{-\frac{b}{\sigma} Y(1) - \frac12 \frac{b^2}{\sigma^2}} \, \gamma({\rm d} x).
\end{align*}
We let $\tilde g = \min_{s \in [0,1]} Y(s) = \frac{1}{\sigma}g$; 
since $Y$ is a Brownian motion under $\gamma$, we can reason as in previous section to prove that there exists the limit
\begin{align*}
\lim_{r\to 0}\tfrac1{\gamma(\tilde g\ge r/\sigma)}\int_{\{\tilde g\ge r/\sigma\}}\varphi \, {\rm d} \gamma = 
\int_{\{g=0\}}\varphi \, {\rm d} \nu,\quad \forall\;\varphi\in C_b(H),
\end{align*}
where $\nu$, as in subsection \ref{s4.2.1}, is the law of  the Brownian meander.
\\
Then it follows, by taking the limit after a change of measure, that
\begin{align*}
\tfrac{1}{\mu(g\ge r)}\int_{\{g\ge r\}}\varphi \, {\rm d} \mu
=\tfrac{\int_{\{\tilde g\ge r/\sigma\}}\varphi\,\rho^{-1}\, d\gamma}{\int_{\{\tilde g\ge r/\sigma\}}\rho^{-1}\,d\gamma}
&=
\tfrac{
\tfrac
{1}
{\gamma(\tilde g\ge r/\sigma)}\int_{\{\tilde g\ge r/\sigma\}}\varphi\,\rho^{-1}\, {\rm d} \gamma
}
{
\frac
{1}
{\gamma(\tilde g\ge r/\sigma)}\int_{\{\tilde g\ge r/\sigma\}}\rho^{-1}\, {\rm d} \gamma
}
\\
&\longrightarrow
\tfrac{\int_{\{g=0\}}\varphi\,\rho^{-1}\, {\rm d} \nu}{\int_{\{g=0\}}\rho^{-1}\, {\rm d} \nu}
\end{align*}
that we can write in the form
\begin{align*}
\tfrac{1}{\mu(g\ge r)}\int_{\{g\ge r\}}\varphi \, {\rm d} \mu
\longrightarrow 
\tfrac{\int_{\{g=0\}} \varphi\,e^{-b\,x(1)}\, \nu({\rm d} x)}{\int_{\{g=0\}}e^{-b\,x(1)}\, \nu({\rm d} x)}.
\end{align*}
So, the conclusion follows.
\end{proof}

\def\temp{We want now to  to extend identity  \eqref{e6.9d} up to $r=0$. To this purpose we    recall two facts. The first one is a result  from
\cite[page 118, Thm 2.1]{DuIgMi77}; namely defining the probability measure $\mu_r,\, r<0,$ as
\begin{equation}
\label{e4.16f}
\int_H \varphi\,d\nu_r=\frac1{\mu(\{g\ge r\})}\int_{\{g\ge r\}}\varphi\,d\mu,\quad \varphi\in C_b(H)
\end{equation}
then $\nu_r$ converges weakly to $\nu$, where $\nu$ is the law of the Brownian meander. 

The second one is the expression for the joint density  $\psi(y,s)$ of $g=B(\tau)$ and $\tau$, which is given by, see \cite[1.14.4 page 172]{BoSa02}
\begin{equation}
\label{e4.17f}
\psi(y,s)=\frac{|y|}{\sqrt{\pi^2\,s^3\,(1-s)}}\;e^{-\frac{y^2}{2s}},\quad y\le 0,\,s\in[0,1].
\end{equation}
Now we can prove
\begin{Proposition}
\label{p4.9h}
For all $\varphi\in C^1_b(E)$ and all $z\in Q^{-1/2}(H)$ we have
  \begin{equation}
\label{e4.20f}
\begin{array}{l}
\ds\frac1{\sqrt{2\pi}} \int_0^1\E\left[\varphi|g=0,\tau=s\right] 
  \frac{z(s)}{\sqrt{\,s^3\,(1-s)}} 
  \,ds\\
  \\
  \ds=-
\int_{\{g=0\}} [D \varphi(x)\cdot z -  \varphi (x)\langle Q^{-1/2}x,Q^{-1/2}z\rangle]\, \nu(dx),
\end{array}
  \end{equation}
where $\nu$ is the law of the Brownian Meander.
\end{Proposition}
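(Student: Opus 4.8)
\emph{Plan.} The plan is to obtain \eqref{e4.20f} as the limit $r\uparrow 0$ of the finite-$r$ identity \eqref{e6.star} of Proposition \ref{p4.7T}, after normalising both sides by the mass $\mu(g\ge r)$. For $X=B$ all the ingredients of Proposition \ref{p4.7T} are available explicitly: $\mu$ is the Wiener measure, the law of $g$ is \eqref{e6.7d}, and the joint density of $(g,\tau)$ is \eqref{e4.17f}. Substituting \eqref{e4.17f} into \eqref{e6.star} I obtain, for every $r<0$,
\begin{equation*}
-\int_{\{g\ge r\}}\big[D\varphi\!\cdot\!z-\langle Q^{-1/2}x,Q^{-1/2}z\rangle\,\varphi\big]\,{\rm d}\mu
=\int_0^1\E\big[\varphi\big|g\!=\!r,\tau\!=\!s\big]\,\frac{|r|\,z(s)}{\sqrt{\pi^2 s^3(1-s)}}\,e^{-\frac{r^2}{2s}}\,{\rm d}s .
\end{equation*}
Both sides are $O(|r|)$ as $r\uparrow 0$, and from \eqref{e6.7d} one reads off $\mu(g\ge r)=\frac{2}{\sqrt{2\pi}}\int_r^0 e^{-s^2/2}\,{\rm d}s$, so $\mu(g\ge r)/|r|\to \frac{2}{\sqrt{2\pi}}$. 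Dividing the displayed identity by $\mu(g\ge r)$ and letting $r\uparrow 0$ is therefore the natural route to \eqref{e4.20f}.

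\emph{Right-hand side.} First I would treat the left member of the display, which becomes the right member of \eqref{e4.20f}. Dividing by $\mu(g\ge r)$ turns the integral against $\mu$ restricted to $\{g\ge r\}$ into an integral against the probability measure $\nu_r$ of \eqref{e4.16f}. By the theorem of Durrett, Iglehart and Miller \cite{DuIgMi77} recalled above, $\nu_r$ converges weakly to the law $\nu$ of the Brownian meander, so I expect
\begin{equation*}
\frac{1}{\mu(g\ge r)}\int_{\{g\ge r\}}\big[D\varphi\!\cdot\!z-\langle Q^{-1/2}x,Q^{-1/2}z\rangle\,\varphi\big]\,{\rm d}\mu
\longrightarrow
\int_{\{g=0\}}\big[D\varphi\!\cdot\!z-\langle Q^{-1/2}x,Q^{-1/2}z\rangle\,\varphi\big]\,{\rm d}\nu .
\end{equation*}
The map $x\mapsto D\varphi(x)\!\cdot\!z$ is bounded and continuous, so for that term weak convergence applies directly; the term $\varphi\,\langle Q^{-1/2}x,Q^{-1/2}z\rangle$ is only $\mu$-square-integrable, so to pass to the limit I would supplement weak convergence with a uniform-integrability estimate for the Gaussian linear functional $\langle Q^{-1/2}\cdot\,,Q^{-1/2}z\rangle$ under the family $\{\nu_r\}$, using its uniform $L^2(\nu_r)$-bound.

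\emph{Left-hand side and conclusion.} Next I would divide the right member of the display by $\mu(g\ge r)$, writing it as $\frac{|r|}{\mu(g\ge r)}\int_0^1\E[\varphi|g=r,\tau=s]\,\frac{z(s)}{\pi\sqrt{s^3(1-s)}}\,e^{-r^2/(2s)}\,{\rm d}s$. Here $\frac{|r|}{\mu(g\ge r)}\to\frac{\sqrt{2\pi}}{2}$ and $e^{-r^2/(2s)}\to1$; assuming enough regularity of $r\mapsto\E[\varphi|g=r,\tau=s]$ at $r=0$ to justify dominated convergence under the ${\rm d}s$-integral, the integral tends to $\int_0^1\E[\varphi|g=0,\tau=s]\,\frac{z(s)}{\pi\sqrt{s^3(1-s)}}\,{\rm d}s$. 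Multiplying the two limits and using $\frac{\sqrt{2\pi}}{2}\cdot\frac1\pi=\frac{1}{\sqrt{2\pi}}$ reproduces exactly the left-hand side of \eqref{e4.20f}, and equating the two limits gives the claim. The main obstacle is the justification of these two passages to the limit: on one side, controlling the unbounded linear functional so that $\nu_r\Rightarrow\nu$ can legitimately be applied (uniform integrability); on the other, the dominated-convergence step near $r=0$, which also demands care at the endpoints $s=0,1$, where the weight $1/\sqrt{s^3(1-s)}$ is singular and must be absorbed by the decay of $z(s)$ and of the conditional expectation.
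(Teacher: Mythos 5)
Your proposal follows essentially the same route as the paper's own proof: substitute the joint density \eqref{e4.17f} into \eqref{e6.star}, divide both sides by $\mu(g\ge r)$ using $\lim_{r\to 0}\mu(g\ge r)/|r|=2/\sqrt{2\pi}$ from \eqref{e6.7d}, and pass to the limit $r\uparrow 0$ via the Durrett--Iglehart--Miller weak convergence of $\nu_r$ to the meander law $\nu$. The technical caveats you flag (uniform integrability for the unbounded linear functional and dominated convergence under the ${\rm d}s$-integral) are left implicit in the paper's argument, so your sketch is, if anything, more explicit about what remains to be justified.
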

\begin{proof}
By \eqref{e6.9d} we have, taking into account
\eqref{e4.17f},
\begin{equation}
\label{e4.18f}
\begin{array}{l}
 \ds\frac2{\sqrt {2\pi}}e^{-\frac12r^2} \E\left[ z(\tau)\,\varphi|g=r\right]\\
 \\
 \ds =
  \int_0^1\E\left[\varphi|g=r,\tau=s\right] 
  \frac{|r|\,z(s)}{\sqrt{\pi^2\,s^3\,(1-s)}}\;e^{-\frac{r^2}{2s}}\,ds.
  \end{array}
\end{equation}
 On the other hand, by \eqref{e6.7d} we have
 $$
 \mu(g\ge r)= \frac2{\sqrt {2\pi}}\int^0_{r}e^{-\frac12s ^2}ds
 $$
 it follows that
 $$
\lim_{r\to 0}\,\frac{ \mu(g\ge r)}{|r|}=\frac2{\sqrt{2\pi}}.
 $$
 Therefore, dividing both sides of \eqref{e4.18f} by $\mu(g\ge r)$ and letting $r$ tend to zero, yields
 \begin{equation}
\label{e4.19f}
\begin{array}{l}
 \ds\lim_{r\to 0}\,\frac1{ \mu(g\ge r)}\,\frac2{\sqrt {2\pi}}e^{-\frac12r^2} \E\left[ z(\tau)\,\varphi|g=r\right]\\
 \\
 \ds =
  \sqrt{\frac{\pi}{2}}   \int_0^1\E\left[\varphi|g=0,\tau_x=s\right] 
  \frac{z(s)}{\sqrt{\pi^2\,s^3\,(1-s)}} 
  \,ds
  \end{array}
\end{equation}
 Finally, letting $r\to 0$ in both sides of \eqref{e6.9d}, we find identity  \eqref{e4.20f}. \end{proof}

\begin{Remark}
\label{r1z}
\em Identity \eqref{e4.20f} was proved by a different method by S. Bonaccorsi and L. Zambotti, \cite{BoZa04}.
\end{Remark}}

\subsubsection{ Brownian bridge}
\label{ex6.6}

Let us  consider now the Brownian bridge, 
\begin{align*}
X(t)=B_0(t)=B(t)-tB(1),\quad t\in[0,1],
\end{align*}
whose law $\mu$   is concentrated on  the Banach space $\{x\in C([0,1]:\,x(0)=x(1)=0\}\subset E$. The law of $g$ can be obtained easily by conditioning  with respect to $B(1)$, see \cite[1.2.8  page 154]{BoSa02} 
\begin{equation}
\label{e6.7e}
(\mu\circ g^{-1})({\rm d} r)=4\, |r|\,e^{-2r^2}\,{\rm d} r\,{\mathds 1}_{(-\infty,0]},\qquad r<0.                                          
\end{equation}
Moreover, also in this case Hypothesis \ref{h2} is fulfilled.
 Therefore, 
by formula \eqref{e6.2}   we deduce that for any  $z\in H$ and any $r<0$
\begin{multline}
 \label{e6.9e}
4\,|r|\,e^{-2r^2} \;\E\left[ z(\tau)\,\varphi|g=r\right] \\
\\
=
-\int_{\{g\ge r\}} [ D \varphi(x)\!\cdot\! z-  \varphi(x)\,\langle Q^{-1/2}x,Q^{-1/2}z\rangle ]\, \mu(dx).
\end{multline}
Now for letting $r\to 0$ we proceed as before.   First we notice that  defining the probability measure $\mu_r,\, r<0,$ as
\begin{equation}
\label{e4.16ff}
\int_H \varphi\,{\rm d} \nu_r=\tfrac1{\mu(g\ge r)}\int_{\{g\ge r\}}\varphi\,{\rm d} \mu,\quad \varphi\in C_b(H)
\end{equation}
then by \cite[page126, Thm 5.1]{DuIgMi77} $\nu_r$ converges weakly to $\nu$, where $\nu$ is the law of the $3$--D Bessel bridge.

Moreover, we can deduce   the joint density  $\psi_0(y,s)$ of $g=B_0(\tau)$ and $\tau$ by   conditioning  a previous formula with respect to $B(1)$, that is
\[
\mu\left(\tau\in ds,\inf_{0\le t\le 1}B_0(t)\in dr\right)=\mu\left(\tau\in ds,\inf_{0\le t\le 1}B(t)\in dr\mid B(1)=0\right),
\]
using the following expression in \cite[1.14.8 page 173]{BoSa02}
\begin{multline*}
\mu(\tau\in ds,\inf_{0\le t\le 1}B(t)\in dr,B(1)\in dz)
\\
=
\tfrac{|r|(z+|r|)}{\pi\sqrt{s^3(1-s)^3}} \exp\left(-\tfrac{r^2}{2 s}-\tfrac{(z+|r|)^2}{2(1-s)}\right)\;\ds \,{\rm d} r\,{\rm d} z ,
\end{multline*}
with $r< 0\wedge z$. Finally, we obtain
\begin{multline}
\label{e4.23j}
\psi_0(y,s)dy\,ds=\mu(\tau\in ds,\inf_{0\le t\le 1}B_0(t)\in dy)\\
\\
= \sqrt{\tfrac{2}{\pi}}\;\tfrac{y^2}{\sqrt{s^3(1-s)^3}}e^{-\frac{y^2}{2s(1-s)}}\,{\rm d} s\,{\rm d} y.
\end{multline}
Now we can prove
\begin{Proposition}
\label{p4.9i}
For all $\varphi\in C^1_b(E)$ and all $z\in Q^{-1/2}(H)$ we have
\begin{multline}
\label{e4.23jj}
\tfrac{1}{\sqrt{2\pi}}\;\int_0^1\E\left[ \varphi | g\!=\!r,\tau\!=\!s\right]
\tfrac{z (s)}{\sqrt{s^3(1-s)^3}}\,ds\\
\\
=
-\int_{\{g=0\}}  [ D \varphi(x)\!\cdot \!z-  \varphi(x)\,\langle Q^{-1/2}x,Q^{-1/2}z\rangle ]\, \nu(dx).
\end{multline}
\end{Proposition}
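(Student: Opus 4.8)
The plan is to reproduce, for the Brownian bridge, the limiting argument of the Brownian motion case (Proposition~\ref{p4.9h}): start from the $r<0$ identity, divide by $\mu(g\ge r)$, and let $r\uparrow 0$, using the weak convergence of $\nu_r$ to the law $\nu$ of the $3$--D Bessel bridge recalled above. First I would apply Proposition~\ref{p4.7T} with the joint density $\psi_0$ of $(g,\tau)$ from \eqref{e4.23j} (which is continuous in its first variable, so the hypothesis of that proposition holds) to obtain, for every $r<0$, the Brownian-bridge analogue of \eqref{e4.18f},
\begin{multline*}
\sqrt{\tfrac{2}{\pi}}\,r^2\int_0^1 \E\big[\varphi|g\!=\!r,\tau\!=\!s\big]\,\frac{z(s)}{\sqrt{s^3(1-s)^3}}\,e^{-\frac{r^2}{2s(1-s)}}\,{\rm d} s
\\
=-\int_{\{g\ge r\}}\big[D\varphi\cdot z-\langle Q^{-1/2}x,Q^{-1/2}z\rangle\,\varphi\big]\,{\rm d}\mu.
\end{multline*}

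Next I would extract the normalising mass from the explicit law \eqref{e6.7e}: integrating gives $\mu(g\ge r)=\int_r^0 4|y|e^{-2y^2}\,{\rm d} y=1-e^{-2r^2}$, hence $\mu(g\ge r)\sim 2r^2$ and $\lim_{r\uparrow 0}\frac{r^2}{\mu(g\ge r)}=\tfrac12$. Dividing the identity above by $\mu(g\ge r)$, on the left the prefactor $\sqrt{2/\pi}\,r^2/\mu(g\ge r)$ tends to $\tfrac12\sqrt{2/\pi}=\frac{1}{\sqrt{2\pi}}$, the Gaussian factor $e^{-r^2/2s(1-s)}$ tends to $1$, and the conditional expectation tends to $\E[\varphi|g\!=\!0,\tau\!=\!s]$; this reproduces exactly the left-hand side of \eqref{e4.23jj}.

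On the right I would use that, by the definition \eqref{e4.16ff}, for $\psi:=D\varphi\cdot z-\langle Q^{-1/2}x,Q^{-1/2}z\rangle\,\varphi$ one has $\frac{1}{\mu(g\ge r)}\int_{\{g\ge r\}}\psi\,{\rm d}\mu=\int_H\psi\,{\rm d}\nu_r$, and then pass to the limit through the weak convergence of $\nu_r$ to $\nu$, obtaining $-\int_{\{g=0\}}\psi\,{\rm d}\nu$, which is the right-hand side of \eqref{e4.23jj}.

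The main obstacle is making the two limit passages rigorous. On the left one must interchange the limit with the $s$--integral near the singular endpoints $s=0,1$, where the weight $1/\sqrt{s^3(1-s)^3}$ blows up; I would control this with the boundedness of $\varphi$ and the integrability already encoded in $\psi_0$, invoking dominated convergence together with the continuity of $s\mapsto\E[\varphi|g\!=\!0,\tau\!=\!s]$. On the right, $\psi$ contains the \emph{unbounded} linear term $\langle Q^{-1/2}x,Q^{-1/2}z\rangle$, so weak convergence of $\nu_r$ alone does not justify the limit; I would supplement it with a uniform-integrability estimate for this term under $\nu_r$, exactly as in the meander computation underlying Proposition~\ref{p4.9h}, which secures the passage to the limit and yields \eqref{e4.23jj}.
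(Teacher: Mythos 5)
Your proposal follows the paper's proof exactly: apply Proposition \ref{p4.7T} with the joint density \eqref{e4.23j} to get the $r<0$ identity, divide by $\mu(g\ge r)$ using $\lim_{r\to 0}\mu(g\ge r)/r^{2}=2$, and pass to the limit via the weak convergence of $\nu_r$ to the law of the $3$--D Bessel bridge. You are in fact more careful than the paper, which simply asserts ``the conclusion follows'' without discussing the dominated-convergence issue near $s=0,1$ or the uniform integrability of the unbounded term $\langle Q^{-1/2}x,Q^{-1/2}z\rangle$ under $\nu_r$.
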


\begin{proof}
By \eqref{e6.star}, taking into account \eqref{e4.23j}, we have
\[
 \begin{array}{l}
\ds \int_0^1\E\left[ \varphi | g\!=\!r,\tau=s\right]
 \sqrt{\tfrac{2}{\pi}}\;\tfrac{r^2\;(z(s))}{\sqrt{s^3(1-s)^3}}e^{-\tfrac{r^2}{2s(1-s)}}\,{\rm d} s\\
\\
\hspace{3cm}\ds=
-\int_{\{g\ge r\}}  [ D \varphi(x)\!\cdot \!z-  \varphi(x)\,\langle Q^{-1/2}x,Q^{-1/2}z\rangle ]\, \mu(dx).
\end{array}
\]
Since
\[
\mu(g\ge r)=4\int^0_{r} |s|\,e^{-2s^2}\,ds,
\]
 we have
$$
\lim_{r\to 0}\,\frac{ \mu(g\ge r)}{r^2}=2,
$$
and the conclusion follows.
\end{proof}

 \begin{Remark}
 \label{r2z}
 \em Identity \eqref{e4.23jj} was proved by a different method by   L. Zambotti, \cite{Za01}.
 We also quote \cite{Ot09} for a similar formula but assuming $r<0$.

 \end{Remark}

 \subsubsection{Ornstein--Uhlenbeck}
\label{ex6.8}

Let
$$
X(t)=\int_0^te^{-(t-s)a}\,{\rm d} B(s),\quad t\in[0,1],
$$
where $a>0$.
Then  the law of $g$ is given by, see \cite[1.2.4  Pag. 522]{BoSa02}
\begin{equation}
\label{e6.7f}
(\mu\circ g^{-1})(dr)=\frac{2}{\sqrt{\pi}}\frac{\sqrt{a}}{\sqrt{e^{2a}-1}}\; e^{-\frac{a\,r^2}{e^{2a}-1}}\;dr\;{\mathds 1}_{(-\infty,0]}(r),\qquad \forall r<0
\end{equation}
  
By formula \eqref{e6.2}   we deduce that for any  $z\in H$ and any $r<0$
\begin{equation}
 \label{e6.9f}
 \begin{array}{l}
\ds \frac{2}{\sqrt{\pi}}\frac{\sqrt{a}}{\sqrt{e^{2a}-1}}\; e^{-\frac{a\,r^2}{e^{2a}-1}}\;\E\left[
z(\tau) \, \varphi \mid g=r\right] \\
\\
\ds=-
\int_{\{g\ge r\}} [ D \varphi(x)\!\cdot \!z-  \varphi(x)\,\langle Q^{-1/2}x,Q^{-1/2}z\rangle ]\, \mu(dx).
\end{array}
\end{equation}
We emphasize once more that this formula holds for $r < 0$. As opposite to previous sections,
however,  at this point we do not know the existence of the joint density $\pi(r,s)$ of $g$ and $\tau$, therefore we are not able to
apply Proposition  \ref{p4.7T} and pass to the limit as $r \to 0$.

 \subsection{Some remarks about   the Neumann problem}
Let $X(t),\,t\in[0,1],$ be  the Gaussian process considered in Section 4.1 with law $\mu=N_Q$ assuming that Hypothesis \ref{h2} is fulfilled. We denote by $(e_h)$ and $(\lambda_h)$ eigen--sequences such that
\begin{align*}
Qe_h=\lambda_h e_h,\quad h\in\N.
\end{align*}
Let us consider the following stochastic  {equation in the infinite dimensional Hilbert space $H$}
\begin{equation}
\label{e1a}
\left\{\begin{array}{l}
{\rm d}Z=-\frac12\,Q^{-1}Z \, {\rm d}t+ {\rm d}W(t),\\
\\
Z(0)=x,
\end{array}\right. 
\end{equation}
where $W$ is a cylindrical white noise in $H$. It is well known that equation \eqref{e1a} has a unique mild  solution  $Z(t,x)$ (which is an Ornstein--Uhlenbeck process) and that $\mu=N_Q$ is invariant for the corresponding transition semigroup
\begin{equation}
\label{e4b}
P_t\varphi(x)=\E[\varphi(Z(t,x))],\quad \varphi\in C_b(H).
\end{equation} 
We denote by $\mathcal L$ the Kolmogorov operator
\begin{equation}
\label{e1}
\mathcal L\varphi=\frac12\,\mbox{\rm Tr}\,[D^2\varphi]-\frac12\,\langle    Q^{-1}x,D\varphi\rangle,\quad \varphi\in\mathcal E_A(H),
\end{equation} 
where $\mathcal E_A(H)$ is  the space of all exponential functions, see e.g. \cite{Da04}. We recall the identity
\begin{equation}
\label{e6b}
\int_H \mathcal L\varphi\; \psi\, {\rm d}\mu=-\frac12\int_H \langle D\varphi,D\psi   \rangle\, {\rm d} \mu,\quad \forall\;\varphi,\psi\in \mathcal E_A(H).
\end{equation}
Let $g$ be   the function defined by \eqref{e4.4f}  and set
\begin{align*}
K_r=\{x\in H: \, g(x) \ge r\}.
\end{align*}

We are interested in the Neumann problem in $K_r$.  For this we fix $r< 0$ and  introduce the symmetric Dirichlet form 
\begin{equation}
\label{e7b}
a_r(\varphi,\psi)=\frac12\int_{K_r} \langle D\varphi,D\psi   \rangle\,{\rm d} \mu.
\end{equation}
If  $a$ is closable (this had to be checked in the different situations), then thanks to the Lax--Milgram Lemma,  for any $\lambda>0$ and $f\in L^2(H,\mu)$ there exists   $\varphi\in W^{1,2}(H,\mu)$ such that
\begin{equation}
\label{e9b}
\lambda\int_{K_r}\varphi\,\psi\, {\rm d}\mu+a(\varphi,\psi)=\int_{K_r}f\,\psi\,{\rm d} \mu,\quad \forall\,\psi\in \mathcal E_A(H).
\end{equation}
  $\varphi$ is called the {\em weak} solution to the {\em Neumann} problem
\begin{equation}
\label{e10b}
\lambda\ \varphi-\mathcal L\varphi= f\quad\mbox{\rm in }\;K_r.
\end{equation}

 To see whether $\varphi$ fulfills a  Neumann type condition on the boundary of $K_r$ it is important to extend the integration formula \eqref{e6b} to $K_r$. This is provided by the following result.
  \begin{Proposition}
\label{p1}
Under the assumptions of Proposition 4.7, let  $\varphi,\,\psi\in {\mathcal E_A(H)}$, $r<0$  and assume that the series
\begin{equation}
\label{e4.41}
\langle Dg,D\varphi   \rangle:=\sum_{h=1}^\infty e_h(\tau)\,D_h\varphi
\end{equation}
is convergent \textcolor{black}{in $L^1(H,\nu)$} ($\tau$ is the unique point 
where the trajectories of $X(t)$ attain their minimum, see Hypothesis \ref{h2}).
Then the following identity holds
\begin{equation}
\label{e3}
\int_{\{g\ge r\}}\mathcal L\varphi\,\psi\,{\rm d} \mu=-\frac12\int_{\{g\ge r\}}\langle D\varphi, D\psi   \rangle\,{\rm d} \mu-\frac12\,\E\left[ \langle Dg,D\varphi   \rangle\,\psi |g=r\right]\rho(r).
\end{equation}

 \end{Proposition}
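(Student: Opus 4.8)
The plan is to localise the global Kolmogorov identity \eqref{e6b} to the superlevel set $\{g\ge r\}$ by inserting a cut-off in the $g$-variable and then letting the cut-off converge to the indicator of $[r,+\infty)$. Concretely, let $\theta_\epsilon$ be the piecewise-linear approximation of $\mathds 1_{[r,+\infty)}$ already used in the proof of Proposition \ref{p6.1}, and apply \eqref{e6b} with the second test function replaced by $\psi\,\theta_\epsilon(g)$:
\begin{equation*}
\int_H \mathcal L\varphi\;\psi\,\theta_\epsilon(g)\,{\rm d}\mu = -\frac12\int_H \langle D\varphi, D(\psi\,\theta_\epsilon(g))\rangle\,{\rm d}\mu .
\end{equation*}
Since $\varphi,\psi\in\mathcal E_A(H)$ and $\theta_\epsilon(g)$ is a bounded Lipschitz function of $g\in\mathbb D^{1,2}(H,\mu)$, the product $\psi\,\theta_\epsilon(g)$ is bounded and belongs to the domain of the closure of $D$; keeping $\varphi\in\mathcal E_A(H)$ fixed and using the continuity of the form $\frac12\int\langle D\varphi,D\cdot\rangle\,{\rm d}\mu$ on $\mathbb D^{1,2}(H,\mu)$, the identity above is legitimate once \eqref{e6b} is extended to such test functions by the closability recalled in Remark \ref{r4.3f}.

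First I would expand the right-hand side by the product and chain rules, using $D(\theta_\epsilon(g))=\theta_\epsilon'(g)\,Dg$ as in the proof of Proposition \ref{p6.1}:
\begin{equation*}
\langle D\varphi, D(\psi\,\theta_\epsilon(g))\rangle = \theta_\epsilon(g)\,\langle D\varphi,D\psi\rangle + \psi\,\theta_\epsilon'(g)\,\langle D\varphi, Dg\rangle .
\end{equation*}
This splits the identity into a bulk term carrying $\theta_\epsilon(g)$ and a boundary term carrying $\theta_\epsilon'(g)=\tfrac1{2\epsilon}\mathds 1_{\{r-\epsilon\le g\le r+\epsilon\}}$. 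Letting $\epsilon\to0$, the left-hand side and the bulk term converge by dominated convergence (here $\theta_\epsilon(g)\to\mathds 1_{\{g\ge r\}}$ pointwise, while $\mathcal L\varphi$ and $\langle D\varphi,D\psi\rangle$ are bounded) to $\int_{\{g\ge r\}}\mathcal L\varphi\,\psi\,{\rm d}\mu$ and $-\frac12\int_{\{g\ge r\}}\langle D\varphi,D\psi\rangle\,{\rm d}\mu$, which are the first two terms of \eqref{e3}.

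The remaining work is the boundary term $-\frac12\cdot\frac1{2\epsilon}\int_{\{r-\epsilon\le g\le r+\epsilon\}}\psi\,\langle D\varphi,Dg\rangle\,{\rm d}\mu$. As in the passage \eqref{e6.4} of Proposition \ref{p6.1}, I would condition on $\{g=\xi\}$ and use that the law of $g$ has density $\rho$ to rewrite this as $-\frac1{4\epsilon}\int_{r-\epsilon}^{r+\epsilon}\E[\,\psi\,\langle D\varphi,Dg\rangle\mid g=\xi\,]\,\rho(\xi)\,{\rm d}\xi$, which tends to $-\frac12\,\E[\langle Dg,D\varphi\rangle\,\psi\mid g=r]\,\rho(r)$, the last term of \eqref{e3}. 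This step is the main obstacle: the quantity $\langle D\varphi,Dg\rangle$ is not a genuine $H$-inner product, since $Dg$ acts as the evaluation $z\mapsto z(\tau_x)$ (Hypothesis \ref{h2}) and is only formally represented by the series \eqref{e4.41}, $\sum_h e_h(\tau)D_h\varphi$. It is precisely the standing assumption that this series converges in $L^1(H,\nu)$ that makes the conditional expectation $\E[\langle Dg,D\varphi\rangle\,\psi\mid g=r]$ well defined and legitimises passing to the limit in the boundary integrals. Finally, the continuity in $r$ of the conditional densities, as used in Hypothesis \ref{h2} and in the continuity arguments of Proposition \ref{p6.1}, upgrades the resulting almost-everywhere identity to one valid for every $r<0$.
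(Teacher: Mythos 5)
Your route is genuinely different from the paper's, although it lands on the same formula. The paper does not localise the Dirichlet-form identity \eqref{e6b} with a cut-off at all: it takes the already-established boundary integration-by-parts formula \eqref{e4} (i.e.\ \eqref{e6.2} of Proposition \ref{p6.1}), specialises it to the basis directions $z=e_h$, substitutes the test function $D_h\varphi\,\psi$ in place of $\varphi$ (so that $D_h\varphi$ becomes $D_h^2\varphi\,\psi+D_h\varphi\,D_h\psi$), and then sums over $h$; the convergence of the series \eqref{e4.41} is invoked only at this final summation step. The advantage of that organisation is that every single step involves only a \emph{directional} derivative $Dg\cdot e_h=e_h(\tau)$ with $e_h\in Q^{1/2}(H)$, which is a well-defined scalar, and the $\epsilon$-limit and the ``for every $r$'' continuity have already been settled once and for all in Proposition \ref{p6.1}. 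Your approach instead re-runs the cut-off argument from scratch with $\psi\,\theta_\epsilon(g)$ as a test function in \eqref{e6b}; what it buys is a more transparent Green's-formula interpretation (bulk term plus boundary term), at the cost of having to redo the $\epsilon\to0$ limit and the upgrade from a.e.\ $r$ to all $r$ for the new integrand $\psi\,\langle D\varphi,Dg\rangle$.

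The weak point of your version, which you flag but do not resolve, is the very first step: \eqref{e6b} cannot be applied verbatim with the test function $\psi\,\theta_\epsilon(g)$, because $Dg$ is not an element of $H=L^2(0,1)$ --- by Hypothesis \ref{h2} it acts as the evaluation $z\mapsto z(\tau_x)$, i.e.\ as the Dirac mass $\delta_{\tau_x}$ --- so $\psi\,\theta_\epsilon(g)$ does not lie in the domain of the closure of $D$ in the sense of Remark \ref{r4.3f}, and the expression $\langle D\varphi, D(\psi\,\theta_\epsilon(g))\rangle$ is not an $H$-inner product. The hypothesis that the series $\sum_h e_h(\tau)D_h\varphi$ converges gives meaning to the limiting boundary term, but it does not by itself license the chain-rule identity inside the Dirichlet form. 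The clean repair is to perform the integration by parts direction by direction with $z=e_h\in Q^{1/2}(H)$ (for which Proposition \ref{p4.2f}, or directly Proposition \ref{p6.1}, applies) and only then sum --- which is exactly the paper's proof. So your argument is correct in substance and recoverable, but as written the key analytic step is borrowed from a formula that does not literally cover it.
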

\begin{proof}
Let us start from identity \eqref{e6.3},
\begin{equation}
\label{e4}
\E\big[z(\tau)\varphi\big | g\!=\!r\big]\rho(r)=-\int_{\{g\ge r\}}[D\varphi\cdot z- \langle Q^{-1/2}x, Q^{-1/2}z  \rangle\, \varphi]\,{\rm d} \mu,\quad\forall\;r<0.
\end{equation}
Setting $z=e_h$, $x_h=\langle x,e_h\rangle$,  yields
\begin{equation}
\label{e5h}
\E\big[e_h(\tau_x)\,\varphi\big | g\!=\!r\big]\rho(r)=-\int_{\{g\ge r\}}( D_h\varphi- \tfrac{x_h}{\lambda_h}\, \varphi)\,{\rm d} \mu.
\end{equation}

Replacing $\varphi$ by $D_h\varphi\,\psi$  and consequently  
$D_h\varphi$ by $D^2_h\varphi\,\psi+D_h\varphi\,D_h\psi$, yields
\begin{equation}
\label{e6}
\E\big[e_h(\tau)\,D_h\varphi\,\psi\big | g\!=\!r\big]\rho(r)=-\int_{\{g\ge r\}}( D^2_h\varphi\,\psi- \tfrac{x_h}{\lambda_h}\, D_h\varphi\,\psi+D_h\varphi\,D_h\psi)\,{\rm d} \mu.
\end{equation}
Summing up on $h$ the conclusion follows.
  \end{proof}
\begin{Remark}
\label{r1}
\em

Let $\varphi$ be the weak solution of the Neumann problem. If {$ \varphi$ is sufficiently regular then}, {by comparing \eqref{e3} with  \eqref{e6b}} we deduce
\begin{equation}
\label{e8}
\E\left[ \langle Dg,D\varphi  \rangle\,\psi |g=r\right]=0,\quad\forall\;\psi\in L^2(H,\mu).
\end{equation}
 Identity  \eqref{e8}  can be   interpreted as a generalized Neumann condition.
 
Recall in fact that when  \textcolor{black}{ $g$ is a very regular function as for instance }$g=|x|^2$ then \eqref{e8} reduces to
\begin{equation}
\label{e88}
 \langle D\varphi,Dg\rangle=0\quad\mbox{\rm if}\;|x|^2=r,
 \end{equation}
 see \cite{BaDaTu09}, \cite{BaDaTu11} and \cite{DaLu15}. \textcolor{black}{In the present situation, where  $g$ is the infimum of a suitable process, we cannot hope that a boundary condition as \eqref{e88} is fulfilled but another condition should be guessed  with the help of \eqref{e8}.}

\end{Remark}

{\bf Aknowledgement}. We thank Lorenzo Zambotti for useful discussions.

\end{document}